\title{\bf Stochastic differential equations and \\
stochastic parallel translations\\
 in the Wasserstein space }
\author{Hao DING$^{1}$\footnote{Email:  dinghao16@mails.ucas.ac.cn}
	\quad Shizan FANG$^2$\footnote{Email:Shizan.Fang@u-bourgogne.fr}
	\quad Xiang-dong LI$^{3,4}$\footnote{Email: xdli@amt.ac.cn}
	\vspace{3mm}\\
	{\footnotesize $^1$National Center for Mathematics and Interdisciplinary Sciences, }\\
	{\footnotesize Chinese Academy of Sciences, 55, Zhongguancun
		East Road, Beijing, 100190, China }\\
	{\footnotesize $^2$Institut de Math\'ematiques de Bourgogne, UMR 5584 CNRS, }\\
	{\footnotesize Universit\'e de Bourgogne Franche-Comt\'e, F-21000 Dijon, France}\\
	{\footnotesize $^3$Academy of Mathematics and Systems Science,}\\
	{\footnotesize Chinese Academy of Sciences, 55, Zhongguancun
		East Road, Beijing, 100190, China }\\
	{\footnotesize $^4$School of Mathematical Sciences, }\\
	{\footnotesize University of
		Chinese Academy of Sciences, Beijing, 100049, China}
}
\def\N{\mathbb{N}}
\def\R{\mathbb{R}}
\def\E{\mathbb{E}}
\def\P{\mathbb{P}}
\def\T{\mathbb{T}}
\def\TT{\bar{\mathbf{T}}}
\def\bn{\bar{\nabla}}
\def\bD{\bar{D}}
\def\D{{\mathbb D}}
\def\F{\mathcal{F}}
\def\C{\mathcal{C}}
\def\L{\mathcal L}
\def\div{\textup{div}}
\def\eps{\varepsilon}
\def\<{\langle}
\def\>{\rangle}
\let \dis=\displaystyle
\let\ra=\rightarrow
\newtheorem{theorem}{Theorem}[section]
\newtheorem{lemma}[theorem]{Lemma}       %与theorem共用一个计数器
\newtheorem{corollary}[theorem]{Corollary}
\newtheorem{proposition}[theorem]{Proposition}
\newtheorem{remark}[theorem]{Remark}
\newtheorem{definition}[theorem]{Definition}
\begin{document}

\maketitle
\makeatletter % '@' is now a normal "letter" for TeX
\renewcommand\theequation{\thesection.\arabic{equation}}
\@addtoreset{equation}{section}
\makeatother % '@' is restored as a "non-letter" character for TeX

\vspace{-8mm}
\begin{abstract} We will develop some elements in stochastic analysis in the Wasserstein space $\P_2(M)$ over a compact Riemannian manifold $M$, such as intrinsic It\^o formulae, stochastic regular curves and parallel translations along them.  We will establish the existence of parallel translations along  regular curves, or  stochastic regular curves in case of $\P_2(\T)$. Surprisingly enough, in this 
last case, the equation defining stochastic parallel translations is a SDE on a Hilbert space, instead of  a SPDE.
\end{abstract}

\vskip 3mm
\textbf{MSC 2010}: 58B20, 60J45

\textbf{Keywords}: constant vector fields,   stochastic regular curves, It\^o\textcolor{black}{'s} formula, stochastic parallel translations,
internal energy functional.

\section{Introduction}\label{sect1}
{\color{black}\quad During recent decades, the study of  the optimal transportation problem has obtained great successes.  In particular,  in a seminar paper  \cite{Brenier}, Y. Brenier  proved that the optimal transport map for the long standing  Monge-Kantorovich problem with the quadratic distance cost function on Euclidean spaces is given by the gradient of a convex function, which  satisfies a Monge-Amp\`ere equation. 
In  \cite{McCann}, R. McCann extended Brenier's result to  compact Riemannian manifolds. In   \cite{Caffarelli},  L. A. Caffarelli et al. obtained the $C^{2,\alpha}$  regularity estimates for the solutions of  
Brenier Monge-Amp\`ere equation. Here we will not enter the details of this topic, but only refer to two monographs by C. Villani \cite{Villani1, Villani2}. For infinite dimensional case, see \cite{FeyelUstunel, FangShao}.

\vskip 2mm
On the other hand, R. Jordan, D. Kinderlehrer and F. Otto \cite{JKO} gave a steepest descent formulation of  the Fokker-Planck equation on Euclidean space. In  \cite{Otto}, F. Otto introduced an infinite dimensional Riemannian structure on the Wasserstein space of probabilities measures with finite second moments, and interpreted the Fokker-Planck equation and the porous medium equation
as the gradient flows associated to the Boltzmann or R\'enyi type entropy functionals on the Wasserstein space $\P_{2,\infty}(M)$, which is the space
of probability measures having positive smooth density. To obtain the long time
behavior of above equations, he developed heuristically the Hessian calculus  on the Wasserstein space $\P_{2,\infty}(M)$. A striking result is that the Hessian of  the Boltzmann entropy functional is  given by the Bakry-Emery Ricci curvature  \cite{BakryEmery}. See Otto \cite{Otto}, Otto-Villani \cite{OV}, Renesse-Sturm \cite{RS2} and Lott \cite{Lott1}.
In \cite{Sturm, LottVillani, RS2}, J. Lott, C. Villani and K.T. Sturm  developed a synthetic  geometric analysis on metric measured spaces.  In  \cite{AGS}, L. Ambrosio, N. Gigli and G. Savar\'e developed the theory of generalized gradient flows on the Wasserstein space over metric measured spaces. In \cite{LiLi, LiLi2}, S. Li and the third named author of this paper introduced the Langevin deformation between the Wasserstein geodesic flow and the gradient flows on $\P_{2,\infty}(M)$ and proved some $W$-entropy-information  formulas.}

\vskip 2mm

The definition of Levi-Civita covariant derivatives on  $\P_{2,\infty}(M)$ was introduced by J. Lott 
in \cite{Lott1}.  He also derived the linear partial differential equation for parallel translations and identified  the equation of the associated Riemannian geodesics with those introduced by Benamou and Brenier \cite{BB}  for the $L^2$-(Monge-Kantorovich-Rubinstein-)Wasserstein distance from a hydrodynamic point of view. However, the issue of existence and uniqueness of parallel transports has not been studied in \cite{Lott1}. In \cite{AG}, L. Ambrosio and N. Gigli established the existence of parallel translations along regular curves induced by flows of diffeomorphisms,
but in a weak sense; the question whether the strong solutions for parallel translations exist remains open. Concerning parallel translations along geodesics in $\P_{2,\infty}(M)$, the situation seems much more complicated, see \cite{Lott2}. \textcolor{black}{In \cite{DingFang},
the first two authors} revisited these works by proving that vector fields along a regular curve are restrictions on this curve of some vector fields defined on the whole space; this allowed us to introduce parallel translations as in differential geometry. The main purpose of this work is to introduce parallel translations along stochastic regular curves, we will establish their existence and uniqueness in the case of $\P_{2,\infty}(\T)$. Finally we remark that M.K Von Renesse and K.T. Sturm rigorously constructed  in \cite{RS1} a quasi-invariant measure on $\P_2(\T)$.

\section{Framework}\label{sect2}

In this work, we consider a connected compact Riemannian manifold $M$, of the Riemannian distance $d_M$, 
together with the normalized Riemannian measure $dx$: $\int_M dx=1$. As usual, we denote by $\P_2(M)$ 
the space of probability measures on $M$, endowed with  the Wasserstein distance $W_2$ 
defined by

\begin{equation*}
W_2^2(\mu_1, \mu_2) =\inf\Bigl\{ \int_{M\times M} d_M^2(x,y)\,\pi(dx,dy),\quad \pi\in \C(\mu_1, \mu_2)\Bigr\},
\end{equation*}
where $\C(\mu_1, \mu_2)$ is the set of probability measures $\pi$ on $M\times M$, having $\mu_1, \mu_2$ as two 
marginal laws. It is well known that $\P_2(M)$ endowed with $W_2$ is a compact space. 
\vskip 2mm

For tangent spaces $\TT_\mu$ of $\P_2(M)$ at $\mu$, we adopt the definition given in \cite{AGS}, that is, 

\begin{equation}\label{eq1.1}
\TT_\mu = \overline{\bigl\{ \nabla\psi,\ \psi\in C^\infty(M) \bigr\}}^{L^2(\mu)},
\end{equation}
the closure of gradients of smooth functions in the space $L^2(\mu)$ of vector fields on $M$. A curve $\{c(t);\ t\in [0,1]\}$ in 
$\P_2(M)$ is said to be absolutely continuous if there exists $k\in L^2([0,1])$ such that 
\begin{equation*}
W_2\bigl( c(t_1), c(t_2)\bigr)\leq \int_{t_1}^{t_2}k(s)\, ds,\quad t_1<t_2.
\end{equation*}

For such a curve, Ambrosio, Gigli and Savar\'e \cite{AGS} proved that there exists a Borel vector field $Z_t$ on $M$ in
$\dis L^2([0,1]\times M)$, that is, 
\begin{equation*}
\int_0^1 \Bigl[\int_M |Z_t(x)|_{T_xM}^2\, c_t(dx)\Bigr]\ dt<+\infty,
\end{equation*}
which satisfies the continuity equation
\begin{equation}\label{eq1.2}
\frac{dc_t}{dt} +\nabla\cdot (Z_tc_t)=0.
\end{equation}

The uniqueness of solutions to \eqref{eq1.2} holds if $Z_t\in \TT_{c_t}$ for almost all $t\in [0,1]$. We say that 
$Z_t$ is the intrinsic derivative of $\{c_t\}$ and denote it by 
\begin{equation}\label{eq1.3}
\frac{d^I c_t}{dt}.
\end{equation}

For reader's convenience, we introduce now two classes of absolutely continuous curves in $\P_2(M)$. The first example
is those  $\{c_t\}$ generated by flows of maps. More precisely, $c_t=(U_t)_\#c_0$ with 
\begin{equation}\label{eq1.3.1}
\frac{dU_t}{dt}=\nabla\psi(U_t), \quad U_0(x)=x,
\end{equation}
for regular function $\psi$ on $M$. In this case, $\dis \frac{d^Ic_t}{dt}=\nabla\psi$. 

\vskip 2mm
In what follows, we use the notation $V_\psi$ when $\nabla\psi$
is seen as a vector field on $\P_2(M)$, just as  J. Lott did in \cite{Lott1} in order to clarify different roles played by $\nabla\psi$.
For a functional $F$ on $\P_2(M)$, we say that $F$ is derivable along $V_\psi$ if 
\begin{equation}\label{eq1.4}
(\bD_{V_\psi}F)(\mu)=\Bigl\{\frac{d}{dt}F((U_t)_{\#\mu})\Bigr\}_{ t=0}\quad\hbox{\rm exists}.
\end{equation}
We say that the gradient $\bn F(\mu)\in \TT_\mu$ exists if for each $\psi\in C^\infty(M)$, $\bD_{V_\psi}F$ exists and 
$(\bD_{V_\psi}F)(\mu)=\<\bn F,V_\psi\>_{\TT_\mu}$.

\vskip 2mm

The second class of examples of such $\{c_t\}$ is the class of geodesics in $\P_2(M)$. More precisely, 
for two probability measures $c_0, c_1$ on $M$ having density, according to R. McCann \cite{McCann}, there is 
a function $\phi$ in the Sobolev space $\D_1^2(M)$ such that the optimal transport map ${\cal T}$
 pushing $c_0$ to $c_1$, admits the expression
\begin{equation}\label{eq1.5}
{\cal T}(x)=\exp_x(\nabla\phi(x)).
\end{equation}

Then $\{c_t\}$ is defined by $\dis c_t=({\cal T}_t)_\#c_0$ where ${\cal T}_t(x)=\exp_x(t\nabla\phi(x))$, and 
$\dis \frac{d^Ic_t}{dt}=\nabla\phi({\cal T}_t^{-1}(x))$. 

\vskip 2mm
In order to intrinsically formulate stochastic differential equations (SDE) on $\P_2(M)$, we need to introduce 
suitable functionals. Here are usual functionals considered in literature (see for example \cite{AGS}).
\vskip 2mm

1) Potential energy functional. For any $\varphi\in C^2(M)$, we set $F_\varphi(\mu)=\int_M \varphi\ \mu(dx)$. 

2) Internal energy functional. Let $\chi: [0,+\infty[\ra ]-\infty, +\infty]$ be a proper, continuous convex function
satisfying $\chi(0)=0, \ \liminf_{s\ra 0} \frac{\chi(s)}{s^\alpha}>-\infty$ for some $\alpha>\frac{d}{d+2}$, 
where $d$ is the dimension of $M$. The internal 
energy $\F$ is defined as follows 
\begin{equation*}
\F(\mu)=\int_M \chi(\rho(x))\, dx,\quad\hbox{\rm if }\ d\mu=\rho\, dx,
\end{equation*}
and $\F(\mu)=+\infty$ otherwise. Two important examples are $\chi(s)=s\log(s)$ and
$ \dis\chi(s)=\frac{s^m}{m-1}$ for $m> 1$.

3) Interaction energy functional. Let $W: M^2 \ra ]-\infty, +\infty]$ be a l.s.c function, we define
\begin{equation*}
{\mathcal W}(\mu)=\int_{M\times M} W(x,y)\mu(dx)\mu(dy).
\end{equation*}

\vskip 2mm
The results below are not new, but for the sake of self-contained of our paper, we give a complete proof in what follows.
We collect first and second order derivatives of these functionals in the following

\begin{proposition}\label{prop1.1} Let $V_\psi$ be a constant vector field on $\P_2(M)$, we have 
\vskip 2mm

(i)\quad $\dis (\bD_{V_\psi}F_\varphi)(\mu)=\<V_\psi, \bn F_\varphi\>_{\TT_\mu}$ and
\begin{equation}\label{eq1.6}
(\bD_{V_\psi}\bD_{V_\psi}F_\varphi)(\mu)=\int_M \L_{\nabla\psi}^2\varphi\ \mu(dx).
\end{equation}

(ii) \quad For $\chi\in C^2(\R^*)$ such that $|\chi(s)| + s|\chi'(s)|+s^2|\chi''(s)|$ is bounded over $[0,1]$, we have 
 $\dis (\bD_{V_\psi}\F)(\mu)=-\int_M\bigl(\chi'(\rho)\rho-\chi(\rho)\bigr)\, \Delta\psi\, dx$
and
\begin{equation}\label{eq1.7}
(\bD_{V_\psi}\bD_{V_\psi}\F)(\mu)=\int_M \textcolor{black}{p}'(\rho)(\Delta\psi)^2\rho^2\, dx
-\int_M \textcolor{black}{p}(\rho)\<\nabla\psi, \nabla\Delta\psi>\, \rho\, dx,
\end{equation}
where $\dis \textcolor{black}{p}(s)=\chi'(s)-\frac{\chi(s)}{s}$. For $\chi(s)=s\log s$, 
$\dis (\bD_{V_\psi}\bD_{V_\psi}\F)(\mu)=-\int_M  \<\nabla\psi, \nabla\Delta\psi\>\, \rho\, dx$.

(iii)\quad Set 
\begin{equation*}
\Phi(x,\mu)=\int_M \bigl( W(x,y)+W(y,x)\bigr)\, \mu(dy).
\end{equation*}
Then $\dis (\bD_{V_\psi}{\mathcal W})(\mu)=\int_M \<\nabla\Phi(x,;\mu), \nabla\psi(x)\>\, \mu(dx)$, and 
\begin{equation*}
(\bD_{V_\psi}\bD_{V_\psi}{\cal W})(\mu)= 
\int_M \bigl((\bD_{V_\psi}\L_{\nabla\psi}\Phi)(x,\mu)+(\L^2_{\nabla\psi}\Phi)(x,\mu)\bigr)\, \mu(dx).
\end{equation*}
Where $\L_{\nabla\psi}$ denotes the Lie derivative with respect to $\nabla\psi$ on $M$.
\end{proposition}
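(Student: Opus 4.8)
The plan is to compute each first derivative by differentiating the pushed-forward functional along the flow $U_t$ solving \eqref{eq1.3.1}, then differentiate once more. For the potential energy $F_\varphi$, since $(U_t)_\#\mu$ integrates $\varphi$ against the moved measure, we have $F_\varphi((U_t)_\#\mu)=\int_M \varphi(U_t(x))\,\mu(dx)$, so $\frac{d}{dt}F_\varphi((U_t)_\#\mu)=\int_M \<\nabla\varphi(U_t),\nabla\psi(U_t)\>\,\mu(dx)=\int_M (\L_{\nabla\psi}\varphi)(U_t)\,\mu(dx)$; evaluating at $t=0$ gives the first-order formula, and differentiating the integrand once more (using again $\frac{d}{dt}g(U_t)=(\L_{\nabla\psi}g)(U_t)$ with $g=\L_{\nabla\psi}\varphi$) yields \eqref{eq1.6}. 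The interaction term in (iii) is handled the same way: write ${\mathcal W}((U_t)_\#\mu)=\int_{M\times M}W(U_t(x),U_t(y))\,\mu(dx)\mu(dy)$, symmetrize to bring in $\Phi$, differentiate, and recognize the Lie derivative $\L_{\nabla\psi}$ acting in the $x$-variable; the second derivative produces the two terms $(\bD_{V_\psi}\L_{\nabla\psi}\Phi) + (\L^2_{\nabla\psi}\Phi)$ because $\Phi$ itself depends on $\mu=(U_t)_\#\mu$ through its second slot, so the product rule generates one term from differentiating $\Phi(\cdot,\mu)$ as a functional of $\mu$ and one from the Lie transport in the first variable.

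The genuinely substantive case is the internal energy in (ii). Here I would first establish how the density $\rho_t$ of $c_t=(U_t)_\#\mu$ evolves: from the continuity equation \eqref{eq1.2} with $Z_t=\nabla\psi$ (composed appropriately), or directly by the change-of-variables formula $\rho_t(U_t(x))\,\det(dU_t)(x)=\rho(x)$, one gets the transport identity $\partial_t\rho_t + \nabla\cdot(\rho_t\nabla\psi)=0$, equivalently $\partial_t\rho_t = -\<\nabla\rho_t,\nabla\psi\> - \rho_t\Delta\psi$. Then $\frac{d}{dt}\F(c_t)=\int_M \chi'(\rho_t)\,\partial_t\rho_t\,dx = -\int_M \chi'(\rho_t)\big(\<\nabla\rho_t,\nabla\psi\>+\rho_t\Delta\psi\big)\,dx$; integrating the first piece by parts (the boundaryless compact $M$ makes this clean) converts $\chi'(\rho_t)\<\nabla\rho_t,\nabla\psi\>=\<\nabla(\chi(\rho_t)),\nabla\psi\>$ into $-\int_M\chi(\rho_t)\Delta\psi\,dx$, giving $\frac{d}{dt}\F(c_t)=-\int_M\big(\chi'(\rho_t)\rho_t-\chi(\rho_t)\big)\Delta\psi\,dx$, which at $t=0$ is the claimed first-order formula. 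For the second derivative I differentiate this expression in $t$ again, using the $\partial_t\rho_t$ identity inside, and then simplify: writing $q(s)=\chi'(s)s-\chi(s)$ so that $q'(s)=\chi''(s)s=p'(s)s$ where $p(s)=\chi'(s)-\chi(s)/s$, one finds $\frac{d}{dt}\F(c_t)=-\int_M q'(\rho_t)\,\partial_t\rho_t\,\Delta\psi\,dx$ (the term where $\Delta\psi$ is differentiated vanishes since $\psi$ is fixed), then substitute $\partial_t\rho_t=-\<\nabla\rho_t,\nabla\psi\>-\rho_t\Delta\psi$, integrate the gradient term by parts once more, and collect; the two resulting integrals are exactly the $p'(\rho)(\Delta\psi)^2\rho^2$ and $-p(\rho)\<\nabla\psi,\nabla\Delta\psi\>\rho$ terms of \eqref{eq1.7}, and the $\chi(s)=s\log s$ specialization follows since then $p(s)\equiv 1$ and $p'(s)\equiv 0$.

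The main obstacle is analytic rather than algebraic: making the differentiations under the integral sign rigorous and controlling $\rho_t$. One needs to know $c_t$ retains a density and that $\rho_t$ is smooth enough (in $x$) and differentiable in $t$ for the integration by parts and the dominated-convergence arguments to go through — this is where the growth hypotheses on $\chi$ ($|\chi(s)|+s|\chi'(s)|+s^2|\chi''(s)|$ bounded on $[0,1]$, matching the three factors $\chi,\chi',\chi''$ that appear) are consumed, to dominate the integrands uniformly near where $\rho$ is small or large. Since $\psi\in C^\infty(M)$ and $M$ is compact, $U_t$ is a smooth flow of diffeomorphisms, so $\rho_t = (\rho\circ U_t^{-1})\det(dU_t^{-1})$ inherits the regularity of $\rho$; restricting attention to $\mu$ with smooth positive density (as in $\P_{2,\infty}(M)$) removes the remaining difficulty and is consistent with the standing setting of the paper. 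The rest is careful bookkeeping of integration by parts on a closed manifold.
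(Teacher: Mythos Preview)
Your approach is sound and reaches the stated formulas, but for part (ii) it differs methodologically from the paper's proof. You work in Eulerian coordinates: you differentiate $\int_M \chi(\rho_t)\,dx$ via the continuity equation $\partial_t\rho_t = -\nabla\cdot(\rho_t\nabla\psi)$ and then integrate by parts. The paper instead passes to Lagrangian coordinates: it rewrites $\F(\mu_t)=\int_M \hat\chi(\rho_t(U_t))\,\rho\,dx$ with $\hat\chi(s)=\chi(s)/s$, uses the explicit density formula $\rho_t(U_t)=\rho\,\exp\bigl(-\int_0^t\Delta\psi(U_{t-s})\,ds\bigr)$ to obtain the material-derivative relation $\frac{d}{dt}\rho_t(U_t)\big|_{t=0}=-(\Delta\psi)\rho$, and differentiates under the integral. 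For the second derivative the Lagrangian route is a bit more direct: writing $\tilde\F(\mu_t)=-\int_M p(\rho_t(U_t))\,\Delta\psi(U_t)\,\rho\,dx$ and differentiating the two composite factors yields \eqref{eq1.7} immediately, with no further integration by parts. Your Eulerian route needs one extra integration by parts (on the term $\langle\nabla q(\rho),\nabla\psi\rangle\,\Delta\psi$) to arrive at the same place; on the other hand it avoids having to invoke the explicit exponential formula for $\rho_t\circ U_t$.

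One small algebraic correction: your identification $q'(s)=p'(s)s$ is false. With $q(s)=\chi'(s)s-\chi(s)=s\,p(s)$ one has $q'(s)=\chi''(s)s$ but $p'(s)s=\chi''(s)s-p(s)$; they differ by $p(s)$. This does not spoil the argument: after your integration by parts the $(\Delta\psi)^2$ contribution becomes $\int_M\bigl(\chi''(\rho)\rho^2 - q(\rho)\bigr)(\Delta\psi)^2\,dx$, and since $p'(s)s^2=\chi''(s)s^2-q(s)$ this is precisely $\int_M p'(\rho)\rho^2(\Delta\psi)^2\,dx$ as required. Just fix the intermediate identity when you write it up.
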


\vskip 2mm
\begin{proof} Item (i) is well-known. For proving (ii), we consider the flow  $(U_t)$ associated to $\nabla\psi$ (see \eqref{eq1.3.1}).
Let $\mu_t=(U_t)_\#(\rho dx)=\rho_t\, dx$. It is known (see \cite{ABC}) that
\begin{equation*}
\rho_t=\rho(U_{-t})\,e^{-\int_0^t \Delta\psi(U_{-s})ds}
\quad
\hbox{\rm or}\quad
\rho_t(U_t)=\rho\,e^{-\int_0^t \Delta\psi(U_{t-s})ds}.
\end{equation*}
It follows that 
\begin{equation}\label{eq1.9}
\Bigl\{\frac{d}{dt}\rho_t(U_t)\Bigr\}_{t=0}=- (\Delta\psi)\, \rho.
\end{equation}
We have $\dis\F(\mu_t)=\int_M \chi(\rho_t)\,dx=\int_M \frac{\chi(\rho_t)}{\rho_t}\, \rho_tdx=\int_M \hat\chi(\rho_t(U_t))\, \rho dx$,
where $\hat\chi(s)=\chi(s)/s$. Therefore by  Relation \eqref{eq1.9},  we get 
\begin{equation*}
\Bigl\{\frac{d}{dt}\F(\mu_t)\Bigr\}_{t=0}=-\int_M \hat\chi'(\rho) (\Delta\psi\, \rho)\, \rho dx
=-\int_M (\chi'(\rho)\rho-\chi(\rho))\Delta\psi\, dx.
\end{equation*}
For the sake of simplicity, we denote for a moment by $\tilde\F$ the right hand of above relation. Then 
\begin{equation*}
\tilde\F(\mu_t)=-\int_M \bigl(\chi'(\rho_t)\rho_t-\chi(\rho_t)\bigr)\Delta\psi\, dx
=-\int_M \textcolor{black}{p}(\rho_t)\, \Delta\psi\, \rho_t\, dx
\end{equation*}
which is equal to 
\begin{equation*}
-\int_M \textcolor{black}{p}(\rho_t(U_t))\, \Delta\psi(U_t)\,  \rho dx.
\end{equation*}

Again using Relation \eqref{eq1.9}, we get 

\begin{equation*}
\Bigl\{\frac{d}{dt}\tilde\F(\mu_t)\Bigr\}_{t=0}=\int_M \textcolor{black}{p}'(\rho) (\Delta\psi\, \rho)\, \Delta\psi\rho dx
-\int_M \textcolor{black}{p}(\rho)\,\<\nabla\Delta\psi, \nabla\psi\>\,\rho dx,
\end{equation*}
which is nothing but the second formula in (ii). Item (iii) comes from direct calculation.
\end{proof}

\section{Stochastic differential equations on $\P_2(M)$}\label{sect3}

Let's first say a few words on SDE on a Riemannian manifold $M$. Given a family of vector fields 
$\{A_0(t,\cdot), A_1(t,\cdot), \ldots, A_N(t,\cdot)\}$ on $M$,  and $B_t^0=t$ 
and  a standard Brownian motion $t\ra (B_t^1, \ldots, B_t^N)$ on $\R^N$,
how to understand the following SDE on $M$
\begin{equation}\label{eq2.1}
dX_{t,s}=\sum_{i=0}^N A_i(t, X_{t,s})\circ dB_t^i,\quad X_{s,s}(x)=x \ ?
\end{equation}
The equality \eqref{eq2.1} formally holds  in the tangent space $T_{X_{t,s}}M$. 
 Rigorously, a stochastic process $\{X_{t,s},\ t\geq s\}$ is a solution 
to SDE \eqref{eq2.1} if for any test function $f\in C^2(M)$, 
it holds 
\begin{equation}\label{eq2.2}
\begin{split}
f(X_{t,s})=&f(x)+\sum_{i=1}^N \int_s^t (\L_{A_i(u)}f)(X_{u,s})\,dB_u^i\\
&+ \int_s^t \Bigl(\bigl( \L_{A_0(u)}f+\frac{1}{2}\sum_{i=1}^N \L^2_{A_i(u)}f\bigr)(X_{u,s})\, \Bigr)\, du,
\end{split}
\end{equation}
where $\L_{A_i(u)}$ denotes the Lie derivative on $M$ with respect to $x\ra A_i(u,x)$, see \cite{Elworthy, IW, Malliavin}.

\vskip 2mm
In what follows, we develop this concept and use three types of functionals  in the preceding section 
 to introducing SDE on $\P_2(M)$. 
 
 \vskip 2mm
 Let $\{\phi_0, \phi_1, \ldots, \phi_N\}$ be a family of functions on $[0,1]\times M$,
  continuous in $t\in [0,1]$ and smooth in $x\in M$. In this work, $\nabla$ always denotes the gradient operator on $M$.
  First we consider the following Stratanovich SDE on $M$:
  
\begin{equation}\label{eq2.3}
dX_{t,s}=\sum_{i=0}^N \nabla\phi_i(t, X_{t,s})\circ dB_t^i,\quad t\geq s, \quad X_{s,s}(x)=x.
\end{equation}
Let $d\mu=\rho\, dx$ be a probability measure on $M$, we set $\dis \mu_t(\omega)=\bigl(X_{t,0}(\omega)\bigr)_\#\mu$.
Let $\varphi\in C^2(M)$. After first using It\^o\textcolor{black}{'s} formula to $\varphi(X_{t,0})$, then integrating the two hand sides respect to $d\mu$,
we get 

\begin{equation*}
\circ d_t\,F_\varphi(\mu_t)=\sum_{i=0}^N \Bigl( \int_M \<\nabla\varphi, \nabla\phi_i(t, \cdot)\>\ \mu_t(dx)\Bigr) \circ dB_t^i
=\sum_{i=0}^N \<V_\varphi, V_{\phi_i(t,\cdot)} \>_{\TT_{\mu_t}}\circ dB_t^i.
\end{equation*}

\begin{definition}
We  say that the intrinsic It\^o stochastic differential of $\mu_t$, denoted by $\circ d_t^I\mu_t$, admits the following expression
\begin{equation}\label{eq2.4}
\circ d_t^I\mu_t=\sum_{i=0}^N V_{\phi_i(t,\cdot)}\  \circ dB_t^i.
\end{equation}
\end{definition}

Recall that $\bn F_\varphi=V_\varphi$; using notation \eqref{eq2.4}, $\circ d_tF_\varphi(\mu_t)$ can be written in the form

\begin{equation*}
\circ d_t\, F_\varphi(\mu_t)=\langle \bn F_\varphi,\ \circ d_t^I \mu_t\rangle_{\TT_{\mu_t}},
\end{equation*}
symbolically read in the inner product of $\TT_{\mu_t}$, in the same way as we did on Riemannian manifolds.
\vskip 2mm

By It\^o\textcolor{black}{'s} formula \eqref{eq2.2}, 
\begin{equation*}
d_t\varphi(X_{t,0})= \sum_{i=0}^N  (\L_{\nabla \phi_i(t)} \varphi)(X_{t,0})\, dB_t^i
+\frac{1}{2}\ \sum_{i=1}^N \bigl(\L_{\nabla\phi_i(t)}^2\varphi\bigr)(X_{t,0})\, dt.
\end{equation*}

According to Proposition \ref{prop1.1}, above relation yields 
\begin{equation*}
d_t F_\varphi (\mu_t)=\sum_{i=0}^N \<\bn F_\varphi, V_{\phi_i(t)}\>_{\TT_{\mu_t}}\, dB_t^i
+\frac{1}{2} \sum_{i=1}^n (\bD_{V_{\phi_i(t)}}^2F_\varphi)(\mu_t)\, dt.
\end{equation*}

\begin{proposition}\label{prop2.1} For any polynomial $F$ on $\P_2(M)$, 
we have
\begin{equation}\label{eq2.5}
d_t F(\mu_t) = \sum_{i=0}^N \langle \bn F, V_{\phi_i(t,\cdot)}\rangle_{\TT_{\mu_t}}\ dB_t^i
+\frac{1}{2}\sum_{i=1}^N (\bar D_{V_{\phi_i(t,\cdot)}}^2F)(\mu_t)\ dt.
\end{equation}
\end{proposition}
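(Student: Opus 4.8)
The plan is to reduce \eqref{eq2.5} to monomials and then induct with It\^o's product rule. Recall that a polynomial functional $F$ on $\P_2(M)$ is, by definition, a polynomial expression in finitely many potential energies $F_{\varphi_1},\dots,F_{\varphi_m}$ with $\varphi_k\in C^2(M)$; since both sides of \eqref{eq2.5} are linear in $F$, it suffices to prove the identity for a monomial $F=F_{\varphi_1}\cdots F_{\varphi_m}$, and I would argue by induction on $m$. The base case $m=1$ is exactly the display preceding the statement, obtained from It\^o's formula \eqref{eq2.2} on $M$ applied to $\varphi_1(X_{t,0})$, integrated against $\mu$, together with Proposition \ref{prop1.1}(i); in particular $t\mapsto F_{\varphi_1}(\mu_t)$ is then a continuous semimartingale, so by the induction every polynomial evaluated along $\mu_t$ is a continuous semimartingale and the product rule is applicable at each step. (The same induction, carried out on $M^k$ for $\varphi(X_{t,0}(x_1),\dots,X_{t,0}(x_k))$ integrated against $\mu^{\otimes k}$, also covers functionals $\mu\mapsto\int_{M^k}\varphi\,d\mu^{\otimes k}$, should that be the intended notion of polynomial, as in Proposition \ref{prop1.1}(iii).)

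Two Leibniz identities drive the induction, and both should be extracted from the defining formula \eqref{eq1.4} using the semigroup property $(U_{t+s})_\#\mu=(U_s)_\#(U_t)_\#\mu$ of the flow $(U_t)$ of $\nabla\psi$. First, $\bD_{V_\psi}$ is a derivation, so $\bD_{V_\psi}(FG)=F\,\bD_{V_\psi}G+G\,\bD_{V_\psi}F$; letting $\psi$ vary this yields $\bn(FG)=F\,\bn G+G\,\bn F\in\TT_\mu$, so the intrinsic gradient of a polynomial is well defined. Second, differentiating once more along the flow (legitimate precisely because the semigroup property gives $\tfrac{d}{dt}F((U_t)_\#\mu)=(\bD_{V_\psi}F)((U_t)_\#\mu)$) yields
\begin{equation*}
\bD_{V_\psi}^2(FG)=F\,\bD_{V_\psi}^2G+G\,\bD_{V_\psi}^2F+2\,(\bD_{V_\psi}F)(\bD_{V_\psi}G).
\end{equation*}

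Then, assuming \eqref{eq2.5} for polynomials $F$ and $G$, I would apply It\^o's product rule to $F(\mu_t)G(\mu_t)$, namely $d_t(FG)(\mu_t)=F(\mu_t)\,d_tG(\mu_t)+G(\mu_t)\,d_tF(\mu_t)+d_t\langle F(\mu_\cdot),G(\mu_\cdot)\rangle_t$. The martingale parts of the first two terms combine, via the first Leibniz rule, into $\sum_{i=0}^N\langle\bn(FG),V_{\phi_i(t,\cdot)}\rangle_{\TT_{\mu_t}}\,dB_t^i$ (the $i=0$ term being a drift since $B_t^0=t$, consistently with $\bD_{V_{\phi_0}}(FG)=F\,\bD_{V_{\phi_0}}G+G\,\bD_{V_{\phi_0}}F$); their drift parts give $\tfrac12\sum_{i=1}^N\big(F\,\bD_{V_{\phi_i}}^2G+G\,\bD_{V_{\phi_i}}^2F\big)(\mu_t)\,dt$; and, $B^0$ having no quadratic variation, the cross variation equals $\sum_{i=1}^N\langle\bn F,V_{\phi_i}\rangle_{\TT_{\mu_t}}\langle\bn G,V_{\phi_i}\rangle_{\TT_{\mu_t}}\,dt=\sum_{i=1}^N(\bD_{V_{\phi_i}}F)(\bD_{V_{\phi_i}}G)(\mu_t)\,dt$. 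Summing the drift contributions and invoking the second Leibniz rule collapses them to $\tfrac12\sum_{i=1}^N\bD_{V_{\phi_i(t,\cdot)}}^2(FG)(\mu_t)\,dt$, which is exactly \eqref{eq2.5} for $FG$, closing the induction.

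The hard part will be the second-order Leibniz identity: one must be careful that $\bD_{V_\psi}^2F$ genuinely means $\{\tfrac{d^2}{dt^2}F((U_t)_\#\mu)\}_{t=0}$, and that, thanks to the semigroup property, this agrees with iterating the derivation $\bD_{V_\psi}$, so that differentiating a product twice produces the factor $2$ in the cross term; everything else is the routine interplay of It\^o's product rule with the Leibniz rule. One should also state explicitly the regularity used ($\varphi_k\in C^2(M)$, each $\phi_i(t,\cdot)$ smooth and continuous in $t$) so that all Lie derivatives in \eqref{eq2.2} and all terms in Proposition \ref{prop1.1} are well defined along the random curve $\mu_t$.
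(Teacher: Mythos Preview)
Your proof is correct and follows essentially the same approach as the paper: reduce by linearity to monomials $F_{\varphi_1}\cdots F_{\varphi_m}$, use the base case established just before the statement, and induct via It\^o's product rule combined with the first- and second-order Leibniz identities for $\bD_{V_\psi}$. Your write-up is in fact more careful than the paper's, which simply states the product computation and the second-order Leibniz rule without the justifications you supply via the semigroup property of the flow.
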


\vskip 2mm
\begin{proof}  For two functionals $F_1$ and $F_2$ satisfying Formula \eqref{eq2.5}, by It\^o formula, 
\begin{equation*}
 d_t(F_1F_2)(\mu_t)=d_tF_1(\mu_t)\, F_2(\mu_t)+F_1(\mu_t)\, d_tF_2(\mu_t)+ d_tF_1(\mu_t)\cdot d_tF_2(\mu_t).
 \end{equation*}
 
  Note that 
\begin{equation*}
\bar D_{V_{\phi_i(t,\cdot)}}^2 (F_1F_2)
=F_2\bar D_{V_{\phi_i(t,\cdot)}}^2F_1 +F_1 \bar D_{V_{\phi_i(t,\cdot)}}^2F_2
+2 \langle \bn F_1, V_{\phi_i(t,\cdot)}\rangle\cdot \langle \bn F_2, V_{\phi_i(t,\cdot)}\rangle,
\end{equation*}
and $\dis d_tF_1(\mu_t)\cdot d_tF_2(\mu_t)
=\sum_{i=1}^N \langle \bn F_1, V_{\phi_i(t,\cdot)}\rangle\cdot \langle \bn F_2, V_{\phi_i(t,\cdot)}\rangle\, dt$; 
so Formula \eqref{eq2.5} holds for $F_1F_2$. A polynomial $F$ on $\P_2(M)$ is a finite sum of $F_{\varphi_1}\cdots F_{\varphi_k}$,
Formula \eqref{eq2.5} remains true for $F$. We complete the proof. 
\end{proof}

\vskip 2mm
Now are going to see what happens with internal energy functional $\F$, which is not continuous.
Note that if $\dis \rho=\frac{d\mu}{dx}>0$, 
then for almost all $\omega$, $\mu_t(\omega)$ has a positive density $\rho_t$ with respect to $dx$.
\vskip 2mm

\begin{proposition}\label{prop2.2} The stochastic process $\{\rho_t, t\geq 0\}$ satisfies the following SPDE 
\begin{equation}\label{eq2.6} 
d\rho_t=-\sum_{i=0}^N \div\bigl(\rho_t\,\nabla\phi_i(t)\bigr)\, dB_t^i
+\frac{1}{2} \sum_{i=1}^N \div\bigl(\div(\rho_t\nabla\phi_i(t))\nabla\phi_i(t)\bigr)\, dt.
\end{equation}
\end{proposition}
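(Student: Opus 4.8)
The plan is to derive the SPDE \eqref{eq2.6} by testing $\rho_t$ against an arbitrary smooth function $\varphi \in C^\infty(M)$ and reading off the evolution of the pairing $\int_M \varphi\, \rho_t\, dx = F_\varphi(\mu_t)$, which is already controlled by Proposition \ref{prop2.1}. Indeed, by the definition $\mu_t(\omega) = (X_{t,0}(\omega))_\#\mu$ and It\^o's formula \eqref{eq2.2} applied to $\varphi(X_{t,0})$, followed by integration against $d\mu$, we obtain
\begin{equation*}
d_t\!\int_M \varphi\, \rho_t\, dx = \sum_{i=0}^N \Bigl(\int_M \<\nabla\varphi, \nabla\phi_i(t)\>\, \rho_t\, dx\Bigr)\, dB_t^i + \frac{1}{2}\sum_{i=1}^N \Bigl(\int_M (\L_{\nabla\phi_i(t)}^2\varphi)\, \rho_t\, dx\Bigr)\, dt.
\end{equation*}
The next step is to move all differential operators off $\varphi$ and onto $\rho_t$ by integration by parts on the closed manifold $M$ (no boundary terms). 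For the martingale part, $\int_M \<\nabla\varphi, \nabla\phi_i\>\rho_t\, dx = -\int_M \varphi\, \div(\rho_t\nabla\phi_i)\, dx$. For the drift, writing $\L_{\nabla\phi_i}\varphi = \<\nabla\varphi, \nabla\phi_i\>$ and applying the same identity twice, $\int_M (\L_{\nabla\phi_i}^2\varphi)\rho_t\, dx = -\int_M (\L_{\nabla\phi_i}\varphi)\,\div(\rho_t\nabla\phi_i)\, dx = \int_M \varphi\,\div\bigl(\div(\rho_t\nabla\phi_i)\,\nabla\phi_i\bigr)\, dx$.

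Combining these, the weak formulation reads
\begin{equation*}
d_t\!\int_M \varphi\, \rho_t\, dx = -\sum_{i=0}^N \Bigl(\int_M \varphi\,\div(\rho_t\nabla\phi_i(t))\, dx\Bigr) dB_t^i + \frac{1}{2}\sum_{i=1}^N \Bigl(\int_M \varphi\,\div\bigl(\div(\rho_t\nabla\phi_i(t))\nabla\phi_i(t)\bigr)\, dx\Bigr) dt,
\end{equation*}
valid for every $\varphi \in C^\infty(M)$. Since this holds against a dense family of test functions and both sides are (for fixed $t$) continuous linear functionals of $\varphi$, one concludes that $\rho_t$ satisfies \eqref{eq2.6} in the sense of distributions on $M$, which is the asserted SPDE. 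To make the interchange of the stochastic integral with the spatial integral rigorous, I would invoke a stochastic Fubini theorem, using that $\nabla\phi_i$ and its derivatives are bounded (by compactness of $M$ and continuity in $t$) and that $\rho_t$ has finite mass, so the relevant integrands are jointly measurable and satisfy the needed integrability bounds on $[0,T]\times M$.

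The main technical obstacle is precisely this justification of the stochastic Fubini step and, more subtly, the control of $\rho_t$ and its spatial regularity: a priori $\rho_t$ is only known to be a positive density obtained by push-forward, so $\div(\rho_t\nabla\phi_i)$ and the second-order term $\div(\div(\rho_t\nabla\phi_i)\nabla\phi_i)$ are to be understood distributionally. The clean way around this is to keep everything in the weak form above — never differentiating $\rho_t$ pointwise — and to note that the flow $X_{t,0}$ is a diffeomorphism for a.e. $\omega$ (the vector fields $\nabla\phi_i(t,\cdot)$ being smooth in $x$), so $\rho_t$ is in fact as smooth as $\rho$ and the manipulations are legitimate classically for $\rho \in C^\infty$, then extended by approximation. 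I expect the verification of the drift identity $\int_M(\L_{\nabla\phi_i}^2\varphi)\rho_t = \int_M \varphi\,\div(\div(\rho_t\nabla\phi_i)\nabla\phi_i)$ via two integrations by parts to be the only computation requiring care with signs.
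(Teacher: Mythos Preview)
Your proposal is correct and follows essentially the same route as the paper: test $\rho_t$ against $\varphi\in C^\infty(M)$, use the It\^o formula for $F_\varphi(\mu_t)$ (equivalently Proposition~\ref{prop2.1}), and move the derivatives from $\varphi$ onto $\rho_t$ via one and then two integrations by parts to identify the martingale and drift terms. The paper's proof is terser and omits your discussion of stochastic Fubini and the regularity of $\rho_t$, but the argument is identical.
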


\vskip 2mm
\begin{proof} We have
\begin{equation*}
\begin{split}
\int_M \<\nabla\varphi, \nabla\phi_i(t)\>\, \mu_t(dx)
&=\int_M \<\nabla\varphi, \rho_t\nabla\phi_i(t)\>\, dx\\
&=-\int_M \varphi\, \div\bigl(\rho_t\nabla\phi_i(t)\bigr)\, dx.
\end{split}
\end{equation*}
In the same way,

\begin{equation*}
\int_M \L_{\nabla\phi_i(t)}^2\varphi\, \mu_t(dx)=\int_M \varphi\, \div\bigl(\div(\rho_t\nabla\phi_i(t))\nabla\phi_i(t)\bigr)\ dx.
\end{equation*}
Formula \eqref{eq2.6} follows by considering $F_\varphi(\mu_t)=\int_M \varphi\, \rho_t\, dx$ for any $\varphi\in C^2(M)$, 
together with Formula \eqref{eq2.5}. 
\end{proof}

\vskip 2mm
Let $\chi$ be a $C^2$ function defined on $]0,+\infty[$. According to \eqref{eq2.6}, we have
\begin{equation*}
\begin{split}
d\chi(\rho_t)=-\sum_{i=0}^N \chi'(\rho_t)\, \div\bigl(\rho_t\nabla\phi_i(t)\bigr)\, dB_t^i
&+\frac{1}{2}\sum_{i=1}^N \chi'(\rho_t)\,\div\bigl(\div(\rho_t\nabla\phi_i(t))\nabla\phi_i(t) \bigr)\,dt\\
&+\frac{1}{2}\sum_{i=1}^N \chi''(\rho_t)\bigl(\div(\rho_t\nabla\phi_i(t)) \bigr)^2\, dt.
\end{split}
\end{equation*}

Remarking that $\dis  \div\bigl(\rho_t\nabla\phi_i(t)\bigr)=\rho_t\Delta\phi_i(t)+\<\nabla\rho_t, \nabla\phi_i(t)\>$, 
that we will use several times in the sequel, we have
\begin{equation*}
\int_M \chi'(\rho_t)\, \div\bigl(\rho_t\nabla\phi_i(t)\bigr)\, dx=\int_M \chi'(\rho_t)\, \rho_t\Delta\phi_i(t)\, dx
+\int_M \chi'(\rho_t)\<\nabla\rho_t, \nabla\phi_i(t)\>\, dx,
\end{equation*}
this last term is equal to 
\begin{equation*}
\int_M \<\nabla\bigl(\chi(\rho_t)\bigr), \nabla\phi_i(t)\>\, dx
=-\int_M \chi(\rho_t)\, \Delta\phi_i(t)\, dx.
\end{equation*}
Therefore
\begin{equation*}
\int_M \chi'(\rho_t)\, \div\bigl(\rho_t\nabla\phi_i(t)\bigr)\, dx=\int_M\bigr( \chi'(\rho_t)\rho_t - \chi(\rho_t)\bigr)\,\Delta\phi_i(t)\, dx,
\end{equation*}
which is $-(\bD_{V_{\phi_i(t)}} \F)(\mu_t)$ by Proposition \ref{prop1.1}. Now we are going to deal with drift terms. 
First of all, we compute 
\begin{equation*}
I_1=\int_M  \chi'(\rho_t)\,\div\bigl(\div(\rho_t\nabla\phi_i(t))\nabla\phi_i(t) \bigr)\,dx
=-\int_M \chi''(\rho_t)\,\<\nabla\rho_t, \nabla\phi_i(t)\>\, \div(\rho_t\nabla\phi_i(t))\, dx.
\end{equation*}

Again using $\dis  \div\bigl(\rho_t\nabla\phi_i(t)\bigr)=\rho_t\Delta\phi_i(t)+\<\nabla\rho_t, \nabla\phi_i(t)\>$, we write down
$I_1$ as 

\begin{equation*}
I_1=-\int_M  \chi''(\rho_t) \rho_t\, \<\nabla\rho_t, \nabla\phi_i(t)\>\Delta\phi_i(t)\,dx
-\int_M \chi''(\rho_t)\,\<\nabla\rho_t, \nabla\phi_i(t)\>^2\, dx.
\end{equation*}
Put $\dis I_2= \int_M \chi''(\rho_t)\, \bigl( \div(\rho_t\nabla\phi_i(t))\bigr)^2\, dx$. In the same way,
\begin{equation*}
\begin{split}
I_2=\int_M \chi''(\rho_t) \rho_t^2(\Delta\phi_i(t))^2\, dx &+ 2\int_M \chi''(\rho_t)\rho_t\<\nabla\rho_t, \nabla\phi_i(t)\>\,\Delta\phi_i(t)\, dx\\
&+\int_M \chi''(\rho_t) \<\nabla\rho_t, \nabla\phi_i(t)\>^2\, dx.
\end{split}
\end{equation*}

Combining above terms gives 
\begin{equation}\label{eq2.7}
I_1+I_2=\int_M \chi''(\rho_t) \rho_t^2(\Delta\phi_i(t))^2\, dx + \int_M \chi''(\rho_t)\rho_t\,\<\nabla\rho_t, \nabla\phi_i(t)\>\,\Delta\phi_i(t)\, dx.
\end{equation}
On the other hand, according to the expression for $\bD_{V_{\phi_i(t)}}^2\F$ in Proposition \ref{prop1.1}, we set
\begin{equation*}
I_3=\int_M \textcolor{black}{p}'(\rho_t)\,(\Delta\phi_i(t))^2\, \rho_t^2\,dx.
\end{equation*}
Replacing $\tilde\chi$ by its expression, $I_3$ becomes
\begin{equation*}
\begin{split}
I_3=\int_M  \chi''(\rho_t)\,(\Delta\phi_i(t))^2\, \rho_t^2\,dx&-\int_M \chi'(\rho_t)(\Delta\phi_i(t))^2\, \rho_t\,dx\\
&+\int_M  \chi(\rho_t)\,(\Delta\phi_i(t))^2\, dx.
\end{split}
\end{equation*}
We put 
\begin{equation*}
I_4 =\int_M \chi'(\rho_t)\, \<\nabla\phi_i(t), \nabla\Delta\phi_i(t)\>\, \rho_t\, dx
-\int_M \chi(\rho_t)\, \<\nabla\phi_i(t), \nabla\Delta\phi_i(t)\>\, dx.
\end{equation*}
In order to more clearly see the relation between $I_1+I_2$ and $I_3-I_4$, we now deal with the second term in $I_1+I_2$, 
that is, 

\begin{equation*} 
J=\int_M \chi''(\rho_t)\rho_t\,\<\nabla\rho_t, \nabla\phi_i(t)\>\,\Delta\phi_i(t)\, dx
=\int_M \<\nabla\bigl(\chi'(\rho_t)),\nabla\phi_i(t)\>\, \rho_t\Delta\phi_i(t)\, dx
\end{equation*}
 Via divergence operator $\div$, we get 
\begin{equation*}
J=-\int_M \chi'(\rho_t)\, \div\bigl(\rho_t\Delta\phi_i(t)\,\nabla\phi_i(t)\bigr)\ dx.
\end{equation*}
Note that $\dis \div\bigl(\rho_t\Delta\phi_i(t)\,\nabla\phi_i(t)\bigr)=\rho_t\, (\Delta\phi_i(t))^2+
\rho_t\, \<\nabla\Delta\phi_i(t),\nabla\phi_i(t)\>+ \<\nabla\rho_t,\nabla\phi_i(t)\>\Delta\phi_i(t)$, and
\begin{equation*}
\begin{split}
-&\int_M \chi'(\rho_t)\<\nabla\rho_t,\nabla\phi_i(t)\>\Delta\phi_i(t)\, dx
=\int_M  \chi(\rho_t)\, \div\bigl(\Delta\phi_i(t)\nabla\phi_i(t)\bigr)\,dx\\
&=\int_M \chi(\rho_t)\, \bigl((\Delta\phi_i(t))^2+\<\nabla\phi_i(t), \nabla\Delta\phi_i(t)\>\bigr)\, dx.
\end{split}
\end{equation*}
Combining all of these terms, finally we prove that $I_1+I_2=I_3-I_4$. In other words, we get the following result.

\begin{proposition}\label{prop2.3} For any internal energy functional $\F$ with $\chi\in C^2(]0,+\infty[)$, we have
\begin{equation*}
d_t\F(\mu_t)=\sum_{i=0}^N \<\bn \F, V_{\phi_i(t)}\>_{\TT_{\mu_t}}\, dB_t^i
+\frac{1}{2}\sum_{i=1}^N \bigl(\bD_{V_{\phi_i(t)}}^2\F\bigr)(\mu_t)\, dt.
\end{equation*}
\end{proposition}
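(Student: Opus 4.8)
\textbf{Proof proposal for Proposition \ref{prop2.3}.}

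The plan is to reduce the statement to an identity between explicit integrals, by matching the It\^o expansion of $\chi(\rho_t)$ against the intrinsic formula of Proposition~\ref{prop1.1}, and then to close the argument by localization so that the $C^2$-assumption on $\chi$ (with no boundedness or growth hypothesis beyond smoothness on $]0,+\infty[$) suffices. First I would recall from Proposition~\ref{prop2.2} that $\rho_t$ solves SPDE~\eqref{eq2.6}; applying the classical (finite-dimensional in the spatial variable is not the point --- this is the It\^o formula for the real-valued semimartingale $t\mapsto \rho_t(x)$ at fixed $x$, then integrated in $x$) It\^o formula to $\chi(\rho_t)$ gives the three-line expansion already displayed in the excerpt, with a martingale part $-\sum_i \chi'(\rho_t)\,\div(\rho_t\nabla\phi_i(t))\,dB_t^i$ and a drift built from $I_1$ (the $\chi'$ times the second-order operator term) and $I_2$ (the $\tfrac12\chi''$ times the quadratic-variation term). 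Integrating in $x$ against $dx$ on the compact manifold $M$ turns the martingale coefficient, after one integration by parts, into $\int_M(\chi'(\rho_t)\rho_t-\chi(\rho_t))\Delta\phi_i(t)\,dx = -(\bD_{V_{\phi_i(t)}}\F)(\mu_t) = \langle\bn\F, V_{\phi_i(t)}\rangle_{\TT_{\mu_t}}$, which is exactly the stochastic part in the claimed formula.

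The core of the proof is then the deterministic identity $I_1+I_2 = I_3 - I_4$, where $I_3$ and $I_4$ are the two integrals whose sum is $(\bD_{V_{\phi_i(t)}}^2\F)(\mu_t)$ according to formula~\eqref{eq1.7} with $p(s)=\chi'(s)-\chi(s)/s$. Here the strategy is purely computational: repeatedly use the product rule $\div(\rho_t\nabla\phi_i)=\rho_t\Delta\phi_i+\langle\nabla\rho_t,\nabla\phi_i\rangle$ and integration by parts on $M$ (no boundary terms, $M$ compact without boundary) to rewrite every occurrence of $\langle\nabla\rho_t,\nabla\phi_i\rangle$ or $\nabla(\chi'(\rho_t))$ in divergence form, pushing all derivatives off $\rho_t$ and onto $\phi_i$. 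The bookkeeping proceeds as in the excerpt: expand $I_1$ and $I_2$ into terms involving $(\Delta\phi_i)^2$, $\langle\nabla\rho_t,\nabla\phi_i\rangle\Delta\phi_i$, and $\langle\nabla\rho_t,\nabla\phi_i\rangle^2$; the $\chi''$-weighted square terms cancel, leaving~\eqref{eq2.7}; then the single surviving cross term $J=\int_M\chi''(\rho_t)\rho_t\langle\nabla\rho_t,\nabla\phi_i\rangle\Delta\phi_i\,dx$ is recognized as $\int_M\langle\nabla(\chi'(\rho_t)),\nabla\phi_i\rangle\rho_t\Delta\phi_i\,dx$ and integrated by parts against $\div(\rho_t\Delta\phi_i\nabla\phi_i)$, whose own product-rule expansion produces precisely the terms of $I_3$ and $I_4$ after one further integration by parts (the one turning $\chi'(\rho_t)\langle\nabla\rho_t,\cdot\rangle$ into $\chi(\rho_t)\div(\cdot)$). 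Matching all terms gives $I_1+I_2=I_3-I_4$ and hence the drift coefficient $\tfrac12\sum_i(\bD^2_{V_{\phi_i(t)}}\F)(\mu_t)$.

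The main obstacle, and the place where care is genuinely required rather than just patience, is \emph{integrability/justification} rather than the algebra: the excerpt's Proposition~\ref{prop1.1}(ii) was proved under the hypothesis that $|\chi(s)|+s|\chi'(s)|+s^2|\chi''(s)|$ is bounded near $0$, whereas Proposition~\ref{prop2.3} asks only for $\chi\in C^2(]0,+\infty[)$. Since $\rho=d\mu/dx>0$ and $M$ is compact, the flow maps keep $\rho_t$ in a compact subinterval $[\delta,\Lambda]\subset]0,+\infty[$ for each fixed $\omega$ and $t$ in a bounded time window --- this follows from the explicit representation $\rho_t(U_t)=\rho\,e^{-\int_0^t\Delta\phi(U_{t-s})\,ds}$ used in the proof of Proposition~\ref{prop1.1} together with continuity of the coefficients --- so on that interval $\chi,\chi',\chi''$ are automatically bounded and all the integrations by parts and applications of It\^o's formula and of Proposition~\ref{prop1.1} are legitimate. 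I would therefore open the proof by fixing such bounds $0<\delta\le\rho_t\le\Lambda<\infty$ (uniform on compact time intervals, a.s.), possibly after a standard localization by stopping times $\tau_n=\inf\{t:\rho_t\notin[1/n,n]\}$, carry out the identity $I_1+I_2=I_3-I_4$ under these bounds, and then remove the localization, which costs nothing because the representation shows $\tau_n\to\infty$. Modulo this routine localization, the identity $I_1+I_2=I_3-I_4$ is exactly what the displayed computation in the excerpt establishes, and the proposition follows by inserting it, together with the martingale-term computation, into~\eqref{eq2.5}-style bookkeeping applied to $t\mapsto\int_M\chi(\rho_t)\,dx=\F(\mu_t)$.
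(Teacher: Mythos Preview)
Your proposal is correct and follows essentially the same route as the paper: apply It\^o's formula to $\chi(\rho_t)$ via the SPDE~\eqref{eq2.6}, integrate over $M$, and verify by repeated integration by parts that the drift $I_1+I_2$ coincides with the intrinsic second-derivative expression $I_3-I_4$ from Proposition~\ref{prop1.1}. Your additional localization step (securing two-sided a.s.\ bounds on $\rho_t$ so that the bare hypothesis $\chi\in C^2(]0,+\infty[)$ suffices) is a point of rigor the paper leaves implicit.
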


\begin{proposition}\label{prop2.4}
It\^o\textcolor{black}{'s} formula \eqref{eq2.5} also holds for interaction energy functional ${\mathcal W}$.
\end{proposition}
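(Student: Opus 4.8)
\textbf{Proof proposal for Proposition \ref{prop2.4}.}

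The plan is to mimic the computation carried out for the internal energy functional, replacing $\F$ by $\mathcal{W}$ and using the density SPDE \eqref{eq2.6} together with the first- and second-order derivative formulas for $\mathcal{W}$ recorded in Proposition \ref{prop1.1}(iii). Recall that $\mathcal{W}(\mu)=\int_{M\times M}W(x,y)\,\mu(dx)\mu(dy)$; when $d\mu_t=\rho_t\,dx$ this reads $\mathcal{W}(\mu_t)=\int_{M\times M}W(x,y)\,\rho_t(x)\rho_t(y)\,dx\,dy$. The idea is to apply It\^o's formula to the product $\rho_t(x)\rho_t(y)$, viewing $(x,y)\mapsto W(x,y)$ merely as a fixed symmetric kernel against which we integrate; because $\rho_t(x)$ and $\rho_t(y)$ are driven by the \emph{same} Brownian motions $B^i$, there will be a genuine quadratic covariation term, exactly as in the proof of Proposition \ref{prop2.1} for products $F_1F_2$.

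The key steps, in order, are as follows. First I would write $d\bigl(\rho_t(x)\rho_t(y)\bigr)=\rho_t(y)\,d\rho_t(x)+\rho_t(x)\,d\rho_t(y)+d\rho_t(x)\cdot d\rho_t(y)$, substitute \eqref{eq2.6} at $x$ and at $y$, and integrate against $W(x,y)\,dx\,dy$. The martingale part then becomes $-\sum_{i=0}^N\Bigl(\int_{M\times M}W(x,y)\bigl[\rho_t(y)\,\div_x(\rho_t\nabla\phi_i(t))(x)+\rho_t(x)\,\div_y(\rho_t\nabla\phi_i(t))(y)\bigr]dx\,dy\Bigr)dB_t^i$; integrating by parts in $x$ (resp.\ $y$) and using the symmetrization $\Phi(x,\mu)=\int_M(W(x,y)+W(y,x))\mu(dy)$, this collapses to $\sum_{i=0}^N\langle\bn\mathcal{W},V_{\phi_i(t)}\rangle_{\TT_{\mu_t}}\,dB_t^i$ by Proposition \ref{prop1.1}(iii). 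Second, I would collect the three $dt$-contributions: the two It\^o-correction drifts coming from the $\frac12\sum_i\div(\div(\rho_t\nabla\phi_i)\nabla\phi_i)$ terms at $x$ and at $y$, plus the covariation term $\sum_{i=1}^N\bigl(\int_{M\times M}W(x,y)\,\div_x(\rho_t\nabla\phi_i)(x)\,\div_y(\rho_t\nabla\phi_i)(y)\,dx\,dy\bigr)dt$. Third, I would show that this sum of drift terms equals $\frac12\sum_{i=1}^N(\bD_{V_{\phi_i(t)}}^2\mathcal{W})(\mu_t)\,dt$; concretely, after integrating by parts to move the divergences onto $W$, the first two contributions reproduce (the $\mu_t$-average of) $\L_{\nabla\phi_i}^2\Phi$ while the covariation contribution reproduces the $\bD_{V_{\phi_i}}\L_{\nabla\phi_i}\Phi$ piece, matching the formula for $\bD_{V_{\phi_i}}\bD_{V_{\phi_i}}\mathcal{W}$ in Proposition \ref{prop1.1}(iii).

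I expect the main obstacle to be the bookkeeping in step three: one must be careful that $\L_{\nabla\phi_i}$ acts only on the $M$-variable of $\Phi(x,\mu)$ while $\bD_{V_{\phi_i}}$ additionally differentiates the measure argument, and that the two integration-by-parts steps (one in $x$, one in $y$) are distributed correctly between the ``pure Lie derivative'' term and the ``mixed'' term. A clean way to avoid a long computation is to invoke Proposition \ref{prop2.1} directly: since $W$ is l.s.c.\ one may first assume $W\in C^2(M\times M)$, in which case $\mathcal{W}$ is a polynomial functional (it is the pushforward of the smooth function $W$ under the product map, i.e.\ $\mathcal{W}(\mu)=F_{\tilde W}(\mu\otimes\mu)$ and expands as a finite sum of products $F_{\varphi_1}F_{\varphi_2}$ when $W(x,y)=\sum_k\varphi_k^{(1)}(x)\varphi_k^{(2)}(y)$), so \eqref{eq2.5} applies verbatim; the general l.s.c.\ case then follows by a monotone approximation of $W$ from below by smooth functions, passing to the limit in each term using that $M$ is compact and that the coefficients $\nabla\phi_i(t)$ and their derivatives are bounded uniformly in $t$.
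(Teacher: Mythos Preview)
Your density--SPDE route is correct: computing $d(\rho_t(x)\rho_t(y))$ from \eqref{eq2.6}, integrating against $W(x,y)\,dx\,dy$, and matching the martingale and drift parts to the formulas of Proposition~\ref{prop1.1}(iii) does give \eqref{eq2.5} for $\mathcal{W}$. The bookkeeping you anticipate in step three works out, with the covariation term producing exactly the $\bD_{V_{\phi_i}}\L_{\nabla\phi_i}\Phi$ contribution after integrating by parts in both variables.

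The paper, however, takes a shorter path that parallels the treatment of $F_\varphi$ rather than that of $\F$. Since $\mathcal{W}(\mu_t)=\int_{M\times M}W\bigl(X_{t,0}(x),X_{t,0}(y)\bigr)\,\mu(dx)\mu(dy)$, one views $(X_{t,0}(x),X_{t,0}(y))$ as a diffusion on $M\times M$ driven by the diagonal vector fields $A_i(x,y)=(\nabla\phi_i(x),\nabla\phi_i(y))$ and the \emph{same} Brownian motions $B^i$, applies the finite-dimensional It\^o formula \eqref{eq2.2} to $W$ on $M\times M$, and then integrates against $\mu\otimes\mu$. The Lie derivatives $\L_{A_i}W$ and $\L_{A_i}^2W$ immediately give the first- and second-order expressions of Proposition~\ref{prop1.1}(iii) without any appeal to the density SPDE. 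Your approach mirrors the $\F$ proof, which was forced there because $\F$ depends nonlinearly on the density; for the bilinear functional $\mathcal{W}$ the flow-level argument is both shorter and more transparent.

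A word of caution about your ``clean alternative'': a general $W\in C^2(M\times M)$ is \emph{not} a finite sum of tensor products, so $\mathcal{W}$ is not literally a polynomial and Proposition~\ref{prop2.1} does not apply directly; you would need a tensor-product approximation in $C^2$ and a limiting argument. More seriously, the monotone approximation of an l.s.c.\ $W$ by smooth functions does not control second derivatives, so you cannot recover the $\bD_{V_{\phi_i}}^2\mathcal{W}$ term that way. In practice the proposition is stated (and proved) for $W$ regular enough that $\L^2_{\nabla\phi_i}W$ makes sense; the l.s.c.\ generality from the definition of $\mathcal{W}$ is not claimed here.
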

\vskip 2mm

\begin{proof}
Applying It\^o formula to $\dis W\bigl(X_{t,0}(x), X_{t,0}(y)\bigr)$ and proceeding as above yields the result.
\end{proof}

\vskip 2mm
Now having these results in hand, we say now that the stochastic process $\{\mu_t;\ t\geq 0\}$ 
solves the following SDE on $\P_{2,\infty}(M)$, 

\begin{equation}\label{eq2.8}
\circ d_t^I\mu_t=\sum_{i=0}^N V_{\phi_i(t)}(\mu_t)\circ dB_t^i,\quad \mu_0=\rho\, dx.
\end{equation}

In what follows, we introduce quite general vector fields on $\P_2(M)$.
\begin{definition}\label{def2.2}
We say that $Z$ is a vector field on $\P_2(M)$ if there exists a Borel map $\Phi: M\times\P_2(M)\ra \R$
such that, for any $\mu\in\P_2(M)$, $x\ra \Phi(x,\mu)$ is $C^1$ and $\dis Z(\mu)=V_{\Phi(\cdot,\mu)}$. 
\end{definition}

Recall that for two probability measures $\mu, \nu\in \P_{2,\infty}(M)$, there is a unique optimal transport map 
$T_{\mu,\nu}: M\ra M$, which pushes $\mu$ to $\nu$ and has the expression  (see \cite{McCann,GMcCann,Villani1}):
\begin{equation*}
T_{\mu,\nu}(x)=\exp_x\bigl(\nabla\xi(x)\bigr).
\end{equation*}
Let $\dis \xi_x^{\mu,\nu}(t)=\exp_x\bigl(t\nabla\xi(x)\bigr)$ and let $//_t^{\xi_x^{\mu,\nu}}$ be the parallel translation
along $\{\xi_x^{\mu,\nu}(t);\ t\in [0,1]\}$.

\begin{definition}\label{def2.3} We say that $Z$ is Lipschitzian if there exists a constant $\kappa>0$ such that
\begin{equation}\label{eq2.9}
\int_M \Bigl|//_1^{\xi_x^{\mu,\nu}}\nabla\Phi(x,\mu)-\nabla\Phi\bigl(T_{\mu,\nu}(x),\nu\bigr)\Bigr|^2\, \mu(dx)
\leq \kappa^2\, W_2^2(\mu,\nu),
\end{equation}
for any couple of probability measures $(\mu,\nu)\in \P_{2,\infty}(M)\times  \P_{2,\infty}(M)$.
\end{definition}

It was proved in \cite{DingFang} that, under above Lipschitzian condition, the ODE on $\P_{2,\infty}(M)$
\begin{equation}\label{eq2.10}
\frac{d^I \nu_t}{dt}=Z(\nu_t);\quad \nu_0=\rho\, dx \ \hbox{\rm given}
\end{equation}
admits a unique solution. Condition \eqref{eq2.9} is satisfied \cite{DingFang} if (i) $x\ra \nabla^2\Phi(x,\mu) $ exists such that 
\begin{equation}\label{eq2.11}
\Lambda_1=\sup_{\mu\in \P_2(M)}||\nabla^2\Phi(\cdot, \mu)||_\infty <+\infty,
\end{equation}
and (ii) there exists a constant $\Lambda_2$ such that
\begin{equation}\label{eq2.12}
|\nabla\Phi(x,\mu)-\textcolor{black}{\nabla} \Phi(x,\nu)|\leq \Lambda_2\, W_2(\mu, \nu),\quad \hbox{\rm for any }\ x\in M.
\end{equation}

\begin{remark} The function $\Phi$ involved in the interaction energy functional ${\mathcal W}$
satisfies Condition \eqref{eq2.11} and \eqref{eq2.12} if $W\in C^2(M\times M)$. This functional ${\mathcal W}$ plays 
an important role in the work \cite{Liming1}. A definition of absolutely continuous vector field using parallel translations
was given in Chapter 3  in \cite{Gigli}.
\end{remark}

In the sequel, for simplicity, we suppose that $\phi_0, \ldots \phi_N$ are time-independent. 
The following result was established by F. Y. Wang in \cite{Wang} in a quite general setting. 
For the own interest, we give a construction of solutions.

\begin{proposition}\label{prop2.8} Suppose that $\Phi$ satisfies Conditions \eqref{eq2.11} and \eqref{eq2.12}, then
there is a unique solution $(X_t, \mu_t)$ to the following Mckean-Vlasov SDE:
\begin{equation}\label{eq2.13}
dX_t=\sum_{i=0}^N \nabla\phi_i(X_t)\circ dB_t^i + \nabla\Phi(X_t, \mu_t)\,dt,\ \mu_t=(X_t)_\#\mu_0,
\end{equation}
\end{proposition}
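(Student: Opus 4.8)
The plan is to set up a Picard iteration in the space of continuous flows of measures. Given a continuous adapted measure-valued process $t \mapsto \nu_t$ with $\nu_0 = \mu_0$, define $X_t^\nu$ as the solution of the linear (non-McKean) Stratonovich SDE
\begin{equation*}
dX_t = \sum_{i=0}^N \nabla\phi_i(X_t)\circ dB_t^i + \nabla\Phi(X_t, \nu_t)\,dt, \quad X_0 = x,
\end{equation*}
which is well-posed on the compact manifold $M$ because the coefficients $x \mapsto \nabla\phi_i(x)$ and $x \mapsto \nabla\Phi(x,\nu_t)$ are smooth (resp. $C^1$ with uniformly bounded first derivatives by \eqref{eq2.11}), and then set $(\Psi\nu)_t = (X_t^\nu)_\#\mu_0$. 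A fixed point of $\Psi$ is precisely a solution of \eqref{eq2.13}. First I would record the a priori bound: since $M$ is compact the processes live in a fixed compact set, so all moments are automatically finite and no growth estimate is needed.

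The contraction estimate is the heart of the argument. Given two inputs $\nu, \tilde\nu$, I would couple $X_t^\nu$ and $X_t^{\tilde\nu}$ by driving them with the same Brownian motion and the same initial point $x$, then estimate $\E\, d_M^2(X_t^\nu, X_t^{\tilde\nu})$ by a Gronwall argument. The Stratonovich drifts $\nabla\phi_i$ contribute, after conversion to It\^o form, terms controlled by their (bounded) derivatives; the crucial new term is $\nabla\Phi(X_t^\nu,\nu_t) - \nabla\Phi(X_t^{\tilde\nu},\tilde\nu_t)$, which I split as
\begin{equation*}
\bigl(\nabla\Phi(X_t^\nu,\nu_t) - \nabla\Phi(X_t^{\tilde\nu},\nu_t)\bigr) + \bigl(\nabla\Phi(X_t^{\tilde\nu},\nu_t) - \nabla\Phi(X_t^{\tilde\nu},\tilde\nu_t)\bigr);
\end{equation*}
the first piece is bounded by $\Lambda_1\, d_M(X_t^\nu,X_t^{\tilde\nu})$ using \eqref{eq2.11}, and the second by $\Lambda_2\, W_2(\nu_t,\tilde\nu_t)$ using \eqref{eq2.12}. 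Since $W_2^2((\Psi\nu)_t,(\Psi\tilde\nu)_t) \le \E\, d_M^2(X_t^\nu, X_t^{\tilde\nu})$, Gronwall's inequality yields
\begin{equation*}
\sup_{t\le T} W_2^2\bigl((\Psi\nu)_t,(\Psi\tilde\nu)_t\bigr) \le C(T)\int_0^T W_2^2(\nu_s,\tilde\nu_s)\,ds
\end{equation*}
with $C(T)$ depending only on $\Lambda_1,\Lambda_2,N$ and the $C^2$-bounds of the $\phi_i$. Iterating this (or passing to an equivalent weighted sup-norm $e^{-\lambda t}$ with $\lambda$ large) makes $\Psi$ a contraction on $C([0,T];\P_2(M))$, giving existence and uniqueness on $[0,T]$; since $C(T)$ does not blow up in finite time, one extends to all $t\ge 0$ by concatenation.

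Finally I would verify that the fixed point genuinely solves \eqref{eq2.13} with the stated $\mu_t = (X_t)_\#\mu_0$: this is immediate from the construction, but one should note that once $\mu_t$ is the (now fixed) continuous curve, $X_t$ solves a standard SDE with $C^1$ coefficients, so It\^o's formula \eqref{eq2.2} applies and $(X_t)_\#\mu_0 = \mu_t$ holds by definition of $\Psi$. The main obstacle is purely the contraction step, and within it the only subtlety is the conversion of the Stratonovich integral $\sum_i \nabla\phi_i(X_t)\circ dB_t^i$ into It\^o form before applying It\^o's isometry and Gronwall: the It\^o correction $\tfrac12\sum_{i=1}^N \nabla_{\nabla\phi_i}\nabla\phi_i$ is again smooth and bounded on the compact $M$, so it causes no real difficulty, but it must be carried along explicitly in the estimate. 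Everything else is routine given the uniform bounds \eqref{eq2.11}–\eqref{eq2.12} and compactness of $M$.
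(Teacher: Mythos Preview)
Your Picard--fixed-point approach is valid and is in fact the standard route to McKean--Vlasov well-posedness, but it is \emph{not} the paper's argument. The paper factors out the stochastic part first: it solves the drift-free SDE $dU_t = \sum_i \nabla\phi_i(U_t)\circ dB_t^i$ to obtain a stochastic flow $(U_t)$, then pulls back the measure-dependent drift through $U_t^{-1}$ to define pathwise vector fields $V_t(\omega,x,\mu) = (U_t^{-1})'(U_t(x))\,\nabla\Phi(U_t(x),(U_t)_\#\mu)$, verifies these are Lipschitz in $(x,\mu)$ for fixed $\omega$, solves the resulting \emph{deterministic} McKean--Vlasov ODE $\frac{d}{dt}Y_t = V_t(Y_t,\nu_t)$, $\nu_t = (Y_t)_\#\mu_0$, and finally recombines $\tilde X_t = U_t(Y_t)$ via the It\^o--Wentzell formula. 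This ``variation of constants'' trick reduces the stochastic problem to a pathwise ODE, avoiding any direct stochastic Gronwall estimate; your approach is more self-contained and delivers uniqueness in the same stroke, whereas the paper gives only a construction and defers the general statement to Wang.

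One point in your write-up deserves care. In this paper $\mu_t = (X_t)_\#\mu_0$ is the pushforward of $\mu_0$ under the \emph{random flow} $x\mapsto X_t(\omega,x)$, so $\mu_t$ is itself a random measure, not the law of a single process. Your inequality $W_2^2((\Psi\nu)_t,(\Psi\tilde\nu)_t) \le \E\, d_M^2(X_t^\nu,X_t^{\tilde\nu})$ mixes a random left side with a deterministic right side and reads as if you had the classical setup $\mu_t = \mathrm{Law}(X_t)$ in mind. The correct coupling bound is pathwise, $W_2^2((\Psi\nu)_t,(\Psi\tilde\nu)_t) \le \int_M d_M^2(X_t^\nu(x),X_t^{\tilde\nu}(x))\,\mu_0(dx)$, after which you take expectation and run Gronwall on $\E\int_M d_M^2(\cdots)\,\mu_0(dx)$; the contraction then lives in the space of adapted $\P_2(M)$-valued processes under the metric $\sup_t \E[W_2^2(\cdot,\cdot)]^{1/2}$. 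With that adjustment your argument goes through.
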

\vskip 2mm

\begin{proof}  We will construct a solution to \eqref{eq2.13}. Let $(U_t)_{t\geq 0}$ be the stochastic flow associated to the folllowing SDE
\begin{equation*}
dU_t = \sum_{i=0}^N \nabla\phi_i(U_t)\circ dB_t^i.
\end{equation*}
Define the stochastic measure-dependent vector fields $V_t(\omega, x, \mu)$ on $M$ by 
\begin{equation*}
V_t(\omega, x, \mu)=\bigl(U_t^{-1}(\omega, \cdot) \bigr)_*\nabla\Phi(x, (U_t)_\#\mu)
=(U_t^{-1})'(\omega, U_t(x))\nabla\Phi\bigl(U_t(x), (U_t)_\#\mu\bigr),
\end{equation*}
where the prime denotes the differential with respect to $x$. Since the manifold $M$ is compact, we have, for $\omega$ given, 
\begin{equation*}
|V_t(\omega, x, \mu)-V_t(\omega, x, \nu)|
\leq ||(U_t^{-1})'||_\infty\ |\nabla\Phi\bigl(U_t(x), (U_t)_\#\mu\bigr)-\nabla\Phi\bigl(U_t(x), (U_t)_\#\nu\bigr)|.
\end{equation*}
Note that 
\begin{equation*}
W_2\bigl( (U_t)_\#\mu, (U_t)_\#\nu\bigr)\leq ||U_t'||_\infty\,, W_2(\mu, \nu);
\end{equation*}

then under Condition \eqref{eq2.12},  $\mu\ra V_t(\omega, x, \mu)$ is Lipschitzian uniformly in $(t,x)\in [0,1]\times M$; 
 moreover  by Condition \eqref{eq2.11}, 
$x\ra V_t(\omega, x, \mu)$ is Lipschitzian uniformly in $(t,\mu)\in [0,1]\times \P_2(M)$.
So there is a unique solution $(Y_t, \nu_t)$ to the following Mckean-Vlasov ODE on $M$ 
\begin{equation*}
\frac{d}{dt}Y_t=V_t(Y_t, \nu_t),\quad \nu_t=(Y_t)_\#\mu_0.
\end{equation*}
Let $\tilde X_t=U_t(Y_t)$. By It\^o-Wentzell formula,
\begin{equation*}
d\tilde X_t=\sum_{i=0}^N \nabla\phi_i(U_t(Y_t))\circ dB_t^i + U_t'(Y_t)\, V_t(Y_t, \nu_t), 
\end{equation*}
the last term in above equality is 
$\dis \nabla\Phi\bigl( \tilde X_t, (U_t)_\#\nu_t\bigr)$.
 Note that $(\tilde X_t)_\#\mu_0=(U_t)_\#\,(Y_t)_\#\mu_0=(U_t)_\#\nu_t$; 
 therefore $\big(\tilde X_t, (U_t)_\#\nu_t\bigr)$ is a solution to 
  Mckean-Vlasov SDE \eqref{eq2.13} on $M$.
\end{proof}

\begin{proposition}\label{prop2.9}  The stochastic process $\{\mu_t;\ t\in [0,1]\}$ obtained in Proposition \ref{prop2.8} solves 
the following SDE on $\P_2(M)$:

\begin{equation}\label{eq2.14}
\circ d_t^I\mu_t=\sum_{i=0}^N V_{\phi_i}(\mu_t)\  \circ dB_t^i+ Z(\mu_t)\,dt,\quad \mu_{|_{t=0}}=\mu_0,
\end{equation}
where $Z(\mu)=V_{\Phi(\cdot,\mu)}$, see Definition \ref{def2.2}.
\end{proposition}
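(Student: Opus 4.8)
The plan is to verify that the intrinsic Itô differential of $\mu_t = (\tilde X_t)_\#\mu_0$ has the asserted form by testing against potential functionals $F_\varphi$, exactly as was done to justify \eqref{eq2.8} via Propositions \ref{prop2.1}--\ref{prop2.4}. Concretely, I would fix $\varphi \in C^2(M)$, apply the Itô--Wentzell (i.e.\ generalized Itô) formula to $\varphi(\tilde X_t)$ using the SDE \eqref{eq2.13} for $\tilde X_t$, and then integrate both sides against $\mu_0$. Since $d\tilde X_t = \sum_{i=0}^N \nabla\phi_i(\tilde X_t)\circ dB_t^i + \nabla\Phi(\tilde X_t,\mu_t)\,dt$ is a genuine Stratonovich SDE on the compact manifold $M$ with the extra \emph{finite-variation} drift $\nabla\Phi(\cdot,\mu_t)$, the computation is the same one that produced \eqref{eq2.4}--\eqref{eq2.5}, with one additional drift contribution $\int_M \langle \nabla\varphi, \nabla\Phi(x,\mu_t)\rangle\,\mu_t(dx) = \langle \bn F_\varphi, Z(\mu_t)\rangle_{\TT_{\mu_t}}\,dt$ coming from the mean-field term. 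This yields
\begin{equation*}
\circ d_t F_\varphi(\mu_t) = \sum_{i=0}^N \langle \bn F_\varphi, V_{\phi_i}(\mu_t)\rangle_{\TT_{\mu_t}}\circ dB_t^i + \langle \bn F_\varphi, Z(\mu_t)\rangle_{\TT_{\mu_t}}\,dt,
\end{equation*}
which by the Definition of $\circ d_t^I\mu_t$ in \eqref{eq2.4} (extended to include a drift) is precisely the statement \eqref{eq2.14} tested against $F_\varphi$; since the $F_\varphi$, $\varphi\in C^2(M)$, determine $\circ d_t^I\mu_t \in \TT_{\mu_t}$ uniquely (by the uniqueness clause for the continuity equation \eqref{eq1.2} recalled after it), this suffices.

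The main point requiring care is the justification of the Itô--Wentzell step, because the coefficient $\nabla\Phi(\tilde X_t,\mu_t)$ is itself a random, measure-dependent field: one must check that $t\mapsto \mu_t$ is adapted and continuous in $W_2$ (which follows from the construction $\tilde X_t = U_t(Y_t)$ in Proposition \ref{prop2.8}, with $Y_t$ the solution of a measure-dependent ODE and $U_t$ the smooth stochastic flow), and that $x\mapsto \nabla\Phi(x,\mu_t)$ is $C^1$ with derivatives bounded uniformly in $t$, which is exactly Condition \eqref{eq2.11}. Given this, $\varphi(\tilde X_t)$ is a semimartingale and the decomposition
\begin{equation*}
d_t\varphi(\tilde X_t) = \sum_{i=0}^N (\L_{\nabla\phi_i}\varphi)(\tilde X_t)\,dB_t^i + \Big(\L_{\nabla\Phi(\cdot,\mu_t)}\varphi + \tfrac12\sum_{i=1}^N \L^2_{\nabla\phi_i}\varphi\Big)(\tilde X_t)\,dt
\end{equation*}
is the standard Itô form \eqref{eq2.2}; converting back to Stratonovich in the $dB^i$ terms reproduces the clean expression above.

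The only genuine obstacle is a bookkeeping one: one must confirm that interchanging $\int_M(\cdot)\,\mu_0(dx)$ with the stochastic integral is legitimate, i.e.\ apply a stochastic Fubini theorem. This is routine here because $M$ is compact, $\varphi\in C^2$, the $\phi_i$ are smooth, and $\nabla\Phi(\cdot,\mu_t)$ is bounded uniformly in $(t,\omega)$ by \eqref{eq2.11}, so all integrands have the integrability needed. With that in place, and noting that the $dt$-term $\langle \bn F_\varphi, Z(\mu_t)\rangle_{\TT_{\mu_t}}$ matches the intrinsic reading of the drift $Z(\mu_t)=V_{\Phi(\cdot,\mu_t)}$ in \eqref{eq2.14}, the proof is complete. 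One may also remark that, exactly as in Propositions \ref{prop2.2}--\ref{prop2.4}, the same identity extends from $F_\varphi$ to polynomial functionals and to the internal and interaction energy functionals, giving the full intrinsic Itô formula along the McKean--Vlasov flow.
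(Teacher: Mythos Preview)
The paper states Proposition~\ref{prop2.9} without proof, so there is nothing to compare against directly. Your argument is correct and is precisely the natural one suggested by the surrounding text: it replays the computation that led from \eqref{eq2.3} to \eqref{eq2.4} and Proposition~\ref{prop2.1}, now with the additional finite-variation drift $\nabla\Phi(\cdot,\mu_t)$ carried along, and your remarks on adaptedness, regularity (via \eqref{eq2.11}), and stochastic Fubini are the right technical checks.
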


\section{Regular curves and parallel translations on $\P_{2,\infty}(M)$}\label{sect3}

In this section, we only consider the space $\P_{2,\infty}$. According to J. Lott \cite{Lott1}, the Levi-Civita covariant derivative
defined by usual formula 

\begin{equation*}
\begin{split}
2\langle \bn_{V_{\psi_1}}V_{\psi_2}, V_{\psi_3}\rangle_{\TT_\mu}
&= \bD_{V_{\psi_1}}\langle V_{\psi_2}, V_{\psi_3}\rangle_{\TT_\mu} + \bD_{V_{\psi_2}}\langle V_{\psi_3}, V_{\psi_1}\rangle_{\TT_\mu} 
- \bD_{V_{\psi_3}}\langle V_{\psi_1}, V_{\psi_2}\rangle_{\TT_\mu} \\
&+ \langle V_{\psi_3}, [V_{\psi_1},V_{\psi_2}]\rangle_{\TT_\mu}
- \langle V_{\psi_2}, [V_{\psi_1},V_{\psi_3}]\rangle_{\TT_\mu} - \langle V_{\psi_1}, [V_{\psi_2},V_{\psi_3}]\rangle_{\TT_\mu},
\end{split}
\end{equation*}
admits the following expression 

\begin{equation}\label{eq3.1}
\<\bn_{V_{\psi_1}}V_{\psi_2}, V_{\psi_3}\>_{\TT_\mu}=\int_M\< \nabla^2\psi_2, \nabla\psi_1\otimes\nabla\psi_3\>\, \mu(dx).
\end{equation}
Also as usual, above bracket $[V_{\psi_1}, V_{\psi_2}]$ denotes the Lie bracket of two vector fields on $\P_{2,\infty}(M)$. 
Note that  $[V_{\psi_1}, V_{\psi_2}]$ as well as $\bn_{V_{\psi_1}}V_{\psi_2}$ are not constant vector fields. Let 
\begin{equation*}
\Pi_\mu: L^2(M,TM;\mu)\ra \TT_\mu
\end{equation*}
be the orthogonal projection; then
\begin{equation}\label{eq3.2}
\bn_{V_{\psi_1}}V_{\psi_2}(\mu)=\Pi_\mu\bigl(\nabla_{\nabla\psi_1}\nabla\psi_2\bigr).
\end{equation}

For $\mu\in \P_{2,\infty(M)}$ with $d\mu=\rho\, dx,\ \rho>0$, we denote by $\Delta_\mu$ the Witten Laplacian:
\begin{equation*}
\Delta_\mu=\Delta +\<\nabla\log\rho,\ \nabla\cdot\>,
\end{equation*}
and $\div_\mu$ the divergence operator defined by $\int_M \<\nabla\varphi, Z\>\,\mu(dx)=-\int_M\varphi\, \div_\mu(Z)\, \mu(dx)$
for any $\varphi\in C^\infty(M)$. We have the relation 
$\div_\mu(Z)=\div(Z)+\<\nabla\log\rho, Z\>$. By Hodge decomposition \cite{Li}, there is a  function $f$ and a vector field $Y$
of $\div_\mu(Y)=0$ such that $Z=\nabla f + Y$. Therefore $\div_\mu(Z)=\Delta_\mu f$ 
and $f=\Delta_\mu^{-1}\bigl(\div_\mu(Z)\bigr)$, or 

\begin{equation}\label{eq3.3}
\Pi_\mu(Z)=\nabla \Delta_\mu^{-1}\bigl(\div_\mu(Z)\bigr).
\end{equation}
Formula \eqref{eq3.3} first holds for smooth $Z$, then extends to the space $L^2(M,TM; \mu)$. 
\vskip 2mm

Now let $\{c_t\}$ be a curve in $\P_{2,\infty}(M)$ defined by a flow of diffeomorphisms $X_{t,s}$ associate to ODE:
\begin{equation}\label{ODE}
dX_{t,s}=\nabla\phi_t(X_{t,s})\, dt, \quad t\geq s, \quad X_s(x)=x,
\end{equation}
with $c_t=(X_{t,0})_\#(\rho\,dx)$.

\vskip 2mm
The following derivative formula of $\Pi_{c_{t}}$ was obtained  in chapter 5 of  \cite{Gigli}.
For reader's convenience, we include a proof here.

\begin{theorem} For a smooth vector $Z$ on $M$, $t\ra \P_{c_t}(Z)$ is absolutely continuous and 
\begin{equation}\label{eq3.4}
\frac{d}{dt}\Pi_{c_t}(Z)=-\Pi_{c_t}\Bigl(\Delta_{c_t}\phi_t\, \Pi_{c_t}^\perp(Z) \Bigr),
\end{equation}
where $\Pi_\mu^\perp=I-\Pi_\mu$.
\end{theorem}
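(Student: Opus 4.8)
The plan is to differentiate the identity $\Pi_{c_t}(Z) = \nabla \Delta_{c_t}^{-1}\bigl(\div_{c_t}(Z)\bigr)$ from \eqref{eq3.3}, using the explicit evolution of the density $\rho_t$ along the flow $X_{t,0}$. From \eqref{eq1.9} (applied with $\phi_t$ in place of $\psi$) we know $\bigl\{\tfrac{d}{dt}\rho_t(X_{t,0})\bigr\} = -(\Delta\phi_t)\,\rho_t(X_{t,0})$ at each time, so $\log\rho_t$ evolves by a transport-type equation and $\tfrac{d}{dt}\log\rho_t = -\Delta\phi_t - \langle\nabla\log\rho_t,\nabla\phi_t\rangle = -\Delta_{c_t}\phi_t$. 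First I would record this fact, since $\Delta_{c_t}\phi_t$ is exactly the quantity appearing on the right-hand side of \eqref{eq3.4}. Both the operator $\div_{c_t}$ and the operator $\Delta_{c_t}$ depend on $t$ only through $\log\rho_t$, via $\div_{c_t}(Z) = \div(Z) + \langle\nabla\log\rho_t, Z\rangle$ and $\Delta_{c_t} = \Delta + \langle\nabla\log\rho_t,\nabla\cdot\rangle$; this is where all the $t$-dependence is localized.

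The key computation is then a product-rule expansion of $\tfrac{d}{dt}\bigl[\nabla\Delta_{c_t}^{-1}\div_{c_t}(Z)\bigr]$ into three contributions: (a) the derivative hitting $\div_{c_t}(Z)$, which by the above gives $\langle\nabla(-\Delta_{c_t}\phi_t), Z\rangle$; (b) the derivative hitting the operator $\Delta_{c_t}^{-1}$, which by the standard formula $\tfrac{d}{dt}\Delta_{c_t}^{-1} = -\Delta_{c_t}^{-1}\bigl(\tfrac{d}{dt}\Delta_{c_t}\bigr)\Delta_{c_t}^{-1}$ introduces a factor involving $\langle\nabla(-\Delta_{c_t}\phi_t),\nabla\cdot\rangle$; and the outer $\nabla$ just commutes through. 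The strategy is to reassemble these pieces and recognize the combination as $-\nabla\Delta_{c_t}^{-1}\div_{c_t}\bigl(\Delta_{c_t}\phi_t\,\Pi_{c_t}^\perp(Z)\bigr)$, i.e. $-\Pi_{c_t}\bigl(\Delta_{c_t}\phi_t\,\Pi_{c_t}^\perp(Z)\bigr)$ after invoking \eqref{eq3.3} again. The algebraic identity that makes this work is that for a scalar $g = \Delta_{c_t}\phi_t$ and a vector field $W$, one has $\div_{c_t}(gW) = g\,\div_{c_t}(W) + \langle\nabla g, W\rangle$, together with the fact that $\Pi_{c_t}^\perp(Z) = Z - \nabla\Delta_{c_t}^{-1}\div_{c_t}(Z)$ is $\div_{c_t}$-free, so $\div_{c_t}\bigl(g\,\Pi_{c_t}^\perp(Z)\bigr) = \langle\nabla g,\Pi_{c_t}^\perp(Z)\rangle$; matching this against the sum of (a) and (b) is the heart of the argument.

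I expect the main obstacle to be twofold. First, the differentiation of $\Delta_{c_t}^{-1}$ must be justified rigorously: one needs that $t\mapsto \Delta_{c_t}$ is a differentiable family of operators in a suitable sense (e.g. on $C^\infty$, or as a bounded family between appropriate Sobolev spaces with uniform spectral gap, using that $M$ is compact and $\rho_t$ stays bounded away from $0$ and $\infty$ on compact time intervals), so that the resolvent formula and the chain rule apply and the resulting expression lies in $L^2(M,TM;c_t)$; absolute continuity of $t\mapsto\Pi_{c_t}(Z)$ follows from local boundedness of this derivative. Second, one must check that the bookkeeping of the three terms genuinely collapses to the claimed single term — in particular that the contribution where $\nabla$ acts and the contribution from differentiating the inverse operator combine, via the $\div_{c_t}$-free property of $\Pi_{c_t}^\perp(Z)$, into precisely $-\Pi_{c_t}(\Delta_{c_t}\phi_t\,\Pi_{c_t}^\perp(Z))$ with no leftover terms. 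Once the differentiability is in place, this last step is a direct, if slightly delicate, manipulation with $\div_{c_t}$ and $\Delta_{c_t}$; I would carry it out by writing $Z = \Pi_{c_t}(Z) + \Pi_{c_t}^\perp(Z)$ at the outset and noting that the $\Pi_{c_t}(Z)$ part, being a pure gradient, contributes nothing after the cancellations.
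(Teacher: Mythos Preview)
Your computation is correct and the algebra closes exactly as you describe: with $g=\Delta_{c_t}\phi_t$ and $u_t=\Delta_{c_t}^{-1}\div_{c_t}(Z)$, the two contributions combine to $-\Delta_{c_t}^{-1}\langle\nabla g,\Pi_{c_t}^\perp(Z)\rangle$, and the $\div_{c_t}$-free property of $\Pi_{c_t}^\perp(Z)$ identifies this with $-\Delta_{c_t}^{-1}\div_{c_t}\bigl(g\,\Pi_{c_t}^\perp(Z)\bigr)$, so taking $\nabla$ yields \eqref{eq3.4}.

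However, the paper does not go through the operator formula \eqref{eq3.3} at all. It takes a weak (variational) route: for any $f\in C^\infty(M)$ one has the defining identity $\int_M\langle\nabla f,Z\rangle\,c_t=\int_M\langle\nabla f,\Pi_{c_t}(Z)\rangle\,c_t$; writing $c_t=\hat\rho_t\,c_0$ and using only the scalar evolution $\tfrac{d}{dt}\hat\rho_t=-(\Delta_{c_t}\phi_t)\hat\rho_t$, differentiation in $t$ immediately gives
\[
\int_M\Bigl\langle\nabla f,\tfrac{d}{dt}\Pi_{c_t}(Z)\Bigr\rangle\,c_t
=-\int_M\langle\nabla f,\Pi_{c_t}^\perp(Z)\rangle\,\Delta_{c_t}\phi_t\,c_t,
\]
and one concludes since $f$ is arbitrary and $\tfrac{d}{dt}\Pi_{c_t}(Z)\in\TT_{c_t}$. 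The paper's argument thus avoids entirely the differentiation of $\Delta_{c_t}^{-1}$ and the attendant spectral-gap/Sobolev justification that you correctly flag as the main obstacle in your approach; it needs nothing beyond the evolution of the density. Your route, by contrast, is more explicit about the mechanism (it shows precisely how the two operator derivatives collapse via the Leibniz rule for $\div_{c_t}$) and, once the resolvent differentiability is established, yields absolute continuity of $t\mapsto\Pi_{c_t}(Z)$ as a strong statement rather than as a byproduct of the weak identity.
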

\vskip 2mm
\begin{proof}
Denote for a moment $\hat\rho_t$ the density of $c_t$ with respect to $c_0$: $\dis \hat\rho_t=\frac{dc_t}{dc_0}$, then
$\hat\rho_t$ satisfies the following equation
\begin{equation}\label{eq3.5}
\frac{d\hat\rho_t}{dt}=-\div_{c_t}(\nabla\phi_t)\, \hat\rho_t=-(\Delta_{c_t}\phi_t)\, \hat\rho_t.
\end{equation}
Let $f\in C^\infty(M)$, we have the relation 
\begin{equation*}
\int_M\<\nabla f, Z\>\, c_t(dx)=\int_M \<\nabla f, \Pi_{c_t}(Z)\>\, c_t(dx).
\end{equation*}
Using the density $\hat\rho_t$ in the two hand sides, above equality becomes 
\begin{equation*}
\int_M\<\nabla f, Z\>\, \hat\rho_t\ c_0(dx)=\int_M \<\nabla f, \Pi_{c_t}(Z)\>\, \hat\rho_t\ c_0(dx).
\end{equation*}

Taking the derivative on the two hand sides with respect to $t$, and using \eqref{eq3.5}, we get
\begin{equation*}
\begin{split}
-\int_M \<\nabla f, Z\>(\Delta_{c_t}\phi_t)\, \hat\rho_t\ c_0(dx)
=&\int_M \<\nabla f, \frac{d}{dt}\Pi_{c_t}(Z)\>\, \hat\rho_t\ c_0(dx)\\
&-\int_M \<\nabla f, \Pi_{c_t}(Z)\>\, \Delta_{c_t}\phi_t\, \hat\rho_t\ c_0(dx),
\end{split}
\end{equation*}
which implies the equality

\begin{equation*}
\int_M \<\nabla f, \frac{d}{dt}\Pi_{c_t}(Z)\>\, \hat\rho_t\ c_0(dx)
=-\int_M \<\nabla f, \Pi_{c_t}^\perp(Z)\>\, \Delta_{c_t}\phi_t\, \hat\rho_t\ c_0(dx).
\end{equation*}
By \eqref{eq3.3}, $\dis \frac{d}{dt}\Pi_{c_t}(Z)\in \TT_{c_t}$. Since $f$ is arbitrary, we get the result \eqref{eq3.4} from above 
equality.
\end{proof}

\vskip 2mm
Now let's briefly describe results obtained in the literature for parallel translations on $\P_2(M)$. Let $\{Y_t;\ t\in [0,1]\}$ 
be a family of  vector fields along $\{c_t;\ t\in [0,1]\}$, that is, $\dis Y_t\in \TT_{c_t}$. Suppose there are smooth functions 
$(t,x)\ra \Phi_t(x)$ and $(t,x)\ra \Psi_t(x)$ such that 
\begin{equation*}
\frac{d^Ic_t}{dt}=V_{\Phi_t},\quad Y_t=V_{\Psi_t}.
\end{equation*}

J. Lott obtained in \cite{Lott1} that if $\{Y_t;\ t\in [0,1]\}$ is parallel along $\{c_t;\ t\in [0,1]\}$, then 
$\{\nabla\Psi_t;\ t\in [0,1]\}$ is a solution to the following linear PDE

\begin{equation}\label{eq3.6}
\frac{d}{dt}\nabla\Psi_t+\Pi_{c_t}\Bigl(\nabla_{\nabla\Phi_t}\nabla\Psi_t\Bigr)=0.
\end{equation}

Existence \textcolor{black}{of weak solutions} to \eqref{eq3.6} were established by L. Ambrosio and N. Gigli in \cite{AG}. By 
mimicing the section 5 of the paper \cite{AG}, \textcolor{black}{the first two authors} obtained in \cite{DingFang} the followng result

\begin{theorem}\label{th3.2} For any $\nabla\Psi_0\in L^2(c_0)$, there is a unique weak solution $\{\nabla\Psi_t, t\in [0,1]\}$ in the sense that 
$V_{\Psi_t}\in \TT_{c_t}$ and
\begin{equation}\label{eq3.7}
\Pi_{c_t} \Bigl( \lim_{\eps\downarrow 0}\frac{\tau_{\eps}^{-1}\,\nabla\Psi_{t+\eps}(X_{t+\eps,t})-\nabla\Psi_t}{\eps} \Bigr)=0
\end{equation}
holds in $L^2(c_t)$ for almost all $t\in [0,1]$, where $\tau_\eps$ is the parallel translation along 
$\{s\ra X_{t+s,t}, s\in [0,\eps]\}$, that is equivalent to say that $t\ra \nabla\Psi_t$ is absolutely 
continuous and  
\begin{equation}\label{eq3.8}
\frac{d}{dt}\int_M \langle \nabla f, \nabla\Psi_t\rangle\, c_t(dx)=\int_M \langle \nabla^2f, \nabla\phi_t\otimes \nabla\Psi_t\rangle\,c_t(dx),
\quad f\in C^\infty(M).
\end{equation}
\end{theorem}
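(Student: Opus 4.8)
The plan is to prove existence and uniqueness of the weak solution to the parallel transport equation \eqref{eq3.6} by transporting everything back to the fixed space $L^2(c_0)$ via the flow $X_{t,0}$, so that the unknown becomes a curve in a single Hilbert space governed by an ODE. Concretely, I would set $u_t := (X_{t,0})^* \nabla\Psi_t$, i.e. pull the vector field $\nabla\Psi_t$ along $c_t$ back to a (no longer gradient) vector field on $M$ viewed in $L^2(c_0)$; equivalently one keeps $\nabla\Psi_t$ but reads \eqref{eq3.7} using the parallel translation $\tau_\varepsilon$ along the short pieces of the characteristic curves. The geometric content is that \eqref{eq3.7}, resp. the weak form \eqref{eq3.8}, says exactly that the covariant derivative of $t\mapsto\nabla\Psi_t$ along $\{c_t\}$, taken in the ambient $L^2$-bundle and then projected by $\Pi_{c_t}$, vanishes; combined with Formula \eqref{eq3.3} and the derivative formula for $\Pi_{c_t}$ from the previous theorem, this lets one write a closed evolution equation.

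The key steps, in order: (1) Show that the weak formulation \eqref{eq3.8} is equivalent to \eqref{eq3.7}; this is a routine integration-by-parts together with the identity $\frac{d}{dt}\int_M \langle\nabla f,\nabla\Psi_t\rangle c_t(dx) = \int_M \langle \nabla f, \frac{D}{dt}\nabla\Psi_t\rangle c_t(dx) + \int_M \langle \nabla^2 f, \nabla\phi_t\otimes\nabla\Psi_t\rangle c_t(dx)$ coming from the transport of $c_t$ by $\nabla\phi_t$ (this last term is the "extra" term produced by differentiating $\nabla f$ along the flow, since $\frac{d}{dt}(\nabla f)(X_{t,0}) = \nabla^2 f(\nabla\phi_t)$ modulo the connection terms). (2) Using \eqref{eq3.3}, rewrite the vanishing-projection condition as $\frac{D}{dt}\nabla\Psi_t = \Pi_{c_t}^\perp\big(\text{something}\big)$, and then, exploiting $\div_{c_t}$ and the Witten Laplacian, reduce the whole system to an ODE $\frac{d}{dt} v_t = \mathcal{A}_t v_t$ on $L^2(c_0)$ (or on $\TT_{c_0}$), where $v_t$ is the pulled-back field and $\mathcal{A}_t$ is a bounded linear operator built from $\nabla\phi_t$, $\nabla^2\phi_t$, $\Delta_{c_t}\phi_t$ and $\Delta_{c_t}^{-1}$; boundedness uses the smoothness and strict positivity of the density together with compactness of $M$, exactly as in the derivative formula for $\Pi_{c_t}$ proved above. (3) Invoke the standard Cauchy–Lipschitz theorem for linear ODEs with time-continuous bounded coefficients in a Hilbert space to get existence and uniqueness of $v_t$, hence of $\nabla\Psi_t$, for any initial datum $\nabla\Psi_0$; finally check that $V_{\Psi_t}\in\TT_{c_t}$ is preserved, which is automatic from \eqref{eq3.3} once $v_t$ is built as a gradient-valued field.

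The main obstacle I expect is step (2): producing a genuinely \emph{bounded} operator $\mathcal{A}_t$ on the fixed Hilbert space. The difficulty is that $\Pi_{c_t}$ and $\Delta_{c_t}^{-1}$ depend on $t$ through the density $\hat\rho_t$, and the pullback by the flow mixes the connection of $M$ with the transport; one must check that the composition $\nabla\Delta_{c_t}^{-1}\div_{c_t}(\cdot)$ applied to the relevant zeroth- and first-order expressions in $\nabla\Psi_t$ lands in $L^2$ with norm controlled uniformly in $t\in[0,1]$, and that $t\mapsto\mathcal{A}_t$ is strongly continuous. The derivative formula \eqref{eq3.4} from the preceding theorem is the crucial input here, since it already packages the $t$-dependence of $\Pi_{c_t}$ in terms of the multiplication operator by $\Delta_{c_t}\phi_t$, which is bounded because $\phi_t$ is smooth and $\hat\rho_t$ is bounded above and below on the compact manifold $M$. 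Once that estimate is in place, the rest is the classical linear ODE machinery, and uniqueness of weak solutions follows from uniqueness for the ODE.
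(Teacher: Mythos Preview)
The paper does not prove Theorem~\ref{th3.2} in the text; the result is quoted from \cite{DingFang}, which in turn follows Section~5 of Ambrosio--Gigli \cite{AG}. So there is no in-paper argument to compare against line by line. That said, your strategy---pull the unknown back along the flow $X_{t,0}$ and the fibrewise parallel translation $\tau_t$ to a curve $u_t$ in the \emph{fixed} Hilbert space $L^2(c_0)$, and reduce to a linear ODE with bounded time-dependent coefficient---is precisely the scheme the paper invokes, and it is exactly what the paper carries out explicitly in the special case $M=\T$ immediately afterwards (equations~\eqref{eq3.12}--\eqref{eq3.13} and Lemma~\ref{lemma3.4}).

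Your identification of the obstacle is right, but the resolution you sketch is slightly misplaced. The unbounded piece in \eqref{eq3.6} is the first-order operator $\nabla\Psi_t\mapsto\nabla_{\nabla\phi_t}\nabla\Psi_t$; neither $\Delta_{c_t}^{-1}$ nor formula~\eqref{eq3.4} makes this bounded on $L^2$. What kills it is the pullback along the flow itself: with $u_t=\tau_t^{-1}\nabla\Psi_t(X_{t,0})$, the quantity $\tau_t\,\dot u_t$ is by construction the covariant derivative appearing in~\eqref{eq3.7}, so the transport term is absorbed into the very definition of $u_t$. Combining the constraint $u_t=P_tu_t$ (where $P_t$ is the pullback of $\Pi_{c_t}$ to $L^2(c_0)$) with $P_t\dot u_t=0$ then yields the closed ODE $\dot u_t=\dot P_t\,u_t$. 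Boundedness of $\dot P_t$ is now a zeroth-order question---Lipschitz dependence of a family of orthogonal projections on $t$---for which \eqref{eq3.4}, the smoothness of the flow, and the uniform positivity of the density on the compact $M$ suffice. This is transparent in the torus computation, where passing from $g_t$ to $f_t=g_t(X_t)$ removes the $\partial_xg_t\,\partial_x\phi_t$ term and leaves only the bounded (indeed rank-one) projection correction in~\eqref{eq3.13}. With that clarification your plan is sound and aligned with the cited argument.
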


\begin{proposition}\label{prop3.3} Let $\{\nabla\Psi_t;\ t\in [0,1]\}$ be the parallel translation along $\{c_t;\ t\in [0,1]\}$ 
in Theorem \ref{th3.2}, then for any smooth vector field $Z$ on $M$, 
\begin{equation}\label{eq3.9}
\begin{split}
\frac{d}{dt}\int_M \<Z,\nabla\Psi_t\>\, c_t(dx)
=&-\int_M\<(\Delta_{c_t}\phi_t)\, \Pi_{c_t}^\perp(Z),\ \nabla\Psi_t\>\ c_t(dx)\\
&+\int_M \<\nabla_{\nabla\phi_t}\bigl(\Pi_{c_t}(Z) \bigr),\ \nabla\Psi_t\>\ c_t(dx).
\end{split}
\end{equation}
\end{proposition}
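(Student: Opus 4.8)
The plan is to split the smooth vector field $Z$ into its tangential and normal parts, $Z = \Pi_{c_t}(Z) + \Pi_{c_t}^\perp(Z)$, and to treat the two contributions to $\frac{d}{dt}\int_M \langle Z, \nabla\Psi_t\rangle\, c_t(dx)$ separately, feeding each into the two characterizations of the parallel translation already available. Since $\nabla\Psi_t \in \TT_{c_t}$, we have $\int_M \langle Z, \nabla\Psi_t\rangle\, c_t(dx) = \int_M \langle \Pi_{c_t}(Z), \nabla\Psi_t\rangle\, c_t(dx)$, so one may replace $Z$ by $\Pi_{c_t}(Z)$ from the outset. The point is that $\Pi_{c_t}(Z)$ is a moving vector field, so differentiating it in $t$ produces exactly the term $\frac{d}{dt}\Pi_{c_t}(Z)$, which is computed by Theorem (the derivative formula \eqref{eq3.4}), while the parallel-transport condition \eqref{eq3.8} controls the $\nabla\Psi_t$ factor.

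Concretely, first I would write $\Pi_{c_t}(Z) = \nabla f_t$ where $f_t = \Delta_{c_t}^{-1}(\div_{c_t} Z)$, so that \eqref{eq3.8} applies with $\nabla f = \nabla f_t$, but now one must also differentiate the $t$-dependence of $f_t$ itself. So more cleanly: differentiate the product, obtaining
\begin{equation*}
\frac{d}{dt}\int_M \langle \Pi_{c_t}(Z), \nabla\Psi_t\rangle\, c_t(dx)
= \int_M \Bigl\langle \frac{d}{dt}\Pi_{c_t}(Z),\, \nabla\Psi_t\Bigr\rangle c_t(dx)
+ \Bigl[\text{term from } \frac{d}{dt}\bigl(\nabla\Psi_t\, c_t\bigr) \text{ against fixed } \nabla f\Bigr].
\end{equation*}
Using \eqref{eq3.4}, the first term is $-\int_M \langle \Delta_{c_t}\phi_t\, \Pi_{c_t}^\perp(Z),\, \nabla\Psi_t\rangle\, c_t(dx)$, since $\langle \Pi_{c_t}(\,\cdot\,), \nabla\Psi_t\rangle_{c_t} = \langle\,\cdot\,, \nabla\Psi_t\rangle_{c_t}$ by $\nabla\Psi_t \in \TT_{c_t}$; this is exactly the first term on the right of \eqref{eq3.9}. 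For the second term, I would apply \eqref{eq3.8} with $f$ frozen at $f = f_t$ (the derivative formula \eqref{eq3.8} is a statement about differentiating $\int \langle \nabla f, \nabla\Psi_s\rangle c_s(dx)$ at $s=t$ for a fixed test function $f$), which yields $\int_M \langle \nabla^2 f_t, \nabla\phi_t \otimes \nabla\Psi_t\rangle\, c_t(dx) = \int_M \langle \nabla_{\nabla\phi_t}\nabla f_t,\, \nabla\Psi_t\rangle\, c_t(dx) = \int_M \langle \nabla_{\nabla\phi_t}(\Pi_{c_t}(Z)),\, \nabla\Psi_t\rangle\, c_t(dx)$, which is the second term on the right of \eqref{eq3.9}.

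The main obstacle is the rigorous justification of the splitting of the $t$-derivative: one is differentiating a product $\int_M \langle \nabla f_t, \nabla\Psi_t\rangle\, c_t(dx)$ in which all three factors move, and \eqref{eq3.8} is only stated for a fixed test function while \eqref{eq3.4} is stated for a fixed smooth $Z$. I would handle this by the usual difference-quotient argument: write out the increment from $t$ to $t+\eps$, add and subtract the mixed term $\int_M \langle \nabla f_t, \nabla\Psi_{t+\eps}\rangle\, c_{t+\eps}(dx)$, and show each piece converges, using the absolute continuity of $t \mapsto \Pi_{c_t}(Z)$ from the Theorem, the absolute continuity of $t \mapsto \nabla\Psi_t$ from Theorem \ref{th3.2}, and uniform bounds coming from smoothness of $\phi_t$ and compactness of $M$ (which keep $\hat\rho_t$ and its inverse bounded, hence $\Delta_{c_t}^{-1}$ and the projections well-behaved). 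Once the Leibniz splitting is licensed, the identification of the two resulting terms via \eqref{eq3.4} and \eqref{eq3.8} is immediate, as indicated above.
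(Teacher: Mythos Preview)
Your proposal is correct and reaches the same conclusion, but it follows a somewhat different path from the paper's proof. Both arguments start by replacing $Z$ with $\Pi_{c_t}(Z)$ and then differentiate $I_t=\int_M\langle \Pi_{c_t}(Z),\nabla\Psi_t\rangle\,c_t$ via an add--and--subtract decomposition of $I_{t+\eps}-I_t$; and both use the derivative formula \eqref{eq3.4} to identify the ``variation of the projection'' term. The difference lies in how the remaining piece (the variation of $\nabla\Psi_t$ and $c_t$) is handled. You invoke the weak form \eqref{eq3.8} with the time-dependent potential $f_t=\Delta_{c_t}^{-1}\div_{c_t}(Z)$, which delivers $\int_M\langle\nabla_{\nabla\phi_t}(\Pi_{c_t}(Z)),\nabla\Psi_t\rangle\,c_t$ directly but forces you to check that $f_t\in C^\infty(M)$ (true here by elliptic regularity, since $\rho_t$ is smooth) and to justify the Leibniz splitting with a moving test function. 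The paper instead pulls everything back to time $t$ via the flow $X_{t+\eps,t}$ and the parallel transport $\tau_\eps$ on $M$, and then uses the characterization \eqref{eq3.7}: the term $\int_M\langle \Pi_{c_t}(Z),\,\tau_\eps^{-1}\nabla\Psi_{t+\eps}(X_{t+\eps,t})-\nabla\Psi_t\rangle\,c_t$ vanishes in the limit because the bracketed quantity has zero $\TT_{c_t}$-projection, while the difference $\tau_\eps^{-1}\Pi_{c_{t+\eps}}(Z)(X_{t+\eps,t})-\Pi_{c_t}(Z)$ is further split into a piece giving $\frac{d}{dt}\Pi_{c_t}(Z)$ and a piece giving $\nabla_{\nabla\phi_t}\Pi_{c_t}(Z)$. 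The paper's route is slightly cleaner in that it bypasses the potential $f_t$ altogether and makes the covariant derivative on $M$ appear geometrically from the pullback; your route is more ``PDE-flavoured'' and stays closer to the test-function formulation, at the cost of the extra regularity check you correctly flag.
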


\vskip 2mm
\begin{proof}
Let $\dis I_t=\int_M \<\Pi_{c_t}(Z),\nabla\Psi_t\>\, c_t(dx)$. For any $\eps>0$, we have 
\begin{equation*}
I_{t+\eps}=\int_M \<\Pi_{c_{t+\eps}}(Z),\nabla\Psi_{t+\eps}\>\, c_{t+\eps}(dx)
=\int_M \<\tau_\eps^{-1}\Pi_{c_{t+\eps}}(Z),\tau_\eps^{-1}\nabla\Psi_{t+\eps}\>(X_{t+\eps,t})\, c_t(dx).
\end{equation*}
Then
\begin{equation*}
\begin{split}
I_{t+\eps}-I_t&=\int_M \<\tau_\eps^{-1}\Pi_{c_{t+\eps}}(Z)(X_{t+\eps,t})-\Pi_{c_t}(Z)(x),
\ \tau_\eps^{-1}\nabla\Psi_{t+\eps}(X_{t+\eps,t})\>\ c_t(dx)\\
&+\int_M \< \Pi_{c_t}(Z), \tau_\eps^{-1}\nabla\Psi_{t+\eps}(X_{t+\eps,t})-\nabla\Psi_t(x)\>\ c_t(dx)
=J_\eps^1+J_\eps^2\quad \hbox{\rm respectively}.
\end{split}
\end{equation*}
As $\eps\ra 0$, $\dis \tau_\eps^{-1}\nabla\Psi_{t+\eps}(X_{t+\eps,t})$  converges to $\nabla\Psi_t(x)$ and 
$J_\eps^2/\eps$ converges to $0$ by \eqref{eq3.7}. For $J_\eps^1$, note that 
\begin{equation*}
\begin{split}
\frac{1}{\eps}\Bigl(\tau_\eps^{-1}\Pi_{c_{t+\eps}}(Z)(X_{t+\eps,t})-\Pi_{c_t}(Z)(x) \Bigr)
=&\frac{1}{\eps} \Bigl(\tau_\eps^{-1}\Pi_{c_{t+\eps}}(Z)(X_{t+\eps,t})
-\tau_\eps^{-1}\Pi_{c_{t}}(Z)(X_{t+\eps,t})\Bigr)\\
&+\frac{1}{\eps}\Bigl(\tau_\eps^{-1}\Pi_{c_{t}}(Z)(X_{t+\eps,t})-\Pi_{c_t}(Z)(x)\Bigr)
\end{split}
\end{equation*}

As $\eps\ra 0$, the last term in above equality converges to $\dis \nabla_{\nabla\phi_t}\Pi_{c_t}(Z)$, while 
the first term on the right hand gives $\dis \frac{d}{dt}\Pi_{c_t}(Z)$. 
Finally using \eqref{eq3.4}, we get the result \eqref{eq3.9}.
\end{proof}

\vskip 2mm
Now we are going to establish the existence of strong solution to Equation \eqref{eq3.6} in the case of $\P_{2,\infty}(\T)$, 
the base manifold $M$ being the torus $\T$.

\vskip 2mm
 In this case, we can explicit the orthogonal projection $\Pi_{c_t}$. A function 
$v$ on $\T$ is the derivative of a function $\phi$ if and only if $\int_\T v(x)\, dx=0$. In order to make explicit dependence on $x$,
the derivative of $\phi$ on $\T$ is denoted by $\partial_x\phi$. Let $\mu\in \P_{2,\infty}(\T)$ 
with $\dis\rho=\frac{d\mu}{dx}>0$. Let $\partial_x\phi=\Pi_\mu(v)$; then for any function $f$,
\begin{equation*}
\int_\T \partial_xf\, v(x)\rho(x)\, dx=\int_\T \partial_xf\, \partial_x\phi\, \rho(x)\, dx.
\end{equation*}

This implies that $\partial_x(v\rho)=\partial_x(\partial_x\phi\ \rho)$, so that for a constant $K$, 

\begin{equation*}
v\rho=\partial_x\phi\, \rho+K\quad\hbox{\rm or}\quad v=\partial_x\phi+\frac{K}{\rho}.
\end{equation*}

Integrating the two hand sides over $\T$ yields $\dis K=\frac{\int_\T v(x)dx}{\int_\T\frac{dx}{\rho}}$. Therefore 

\begin{equation*}
\Pi_\mu(v)=v-\frac{\int_\T v(x)dx}{\int_\T\frac{dx}{\rho}}\cdot \frac{1}{\rho}.
\end{equation*}

For simplifying notations, we put
\begin{equation}\label{eq3.10}
\hat \rho=\frac{1}{\Bigl(\int_\T\frac{dx}{\rho}\Bigr)\, \rho}.
\end{equation}
It is obvious that $\int_\T \hat\rho\, dx=1$. In the sequel, we use notation $\Pi_\rho$ instead of $\Pi_\mu$. Then 

\begin{equation}\label{eq3.11}
\Pi_\rho(v)=v-\Bigl(\int_\T v(x)dx\Bigr)\ \hat\rho.
\end{equation}

Let $\phi_t\in C^\infty(\T)$ and $(X_t)$ be the flow associated to 
\begin{equation*}
\frac{dX_t}{dt}=\partial_x\phi_t(X_t).
\end{equation*}
Let $c_t=(X_t)_\#\mu$ with $d\mu=\rho\, dx$. Set $\dis \rho_t =\frac{dc_t}{dx}$ the density of $c_t$ with respect to $dx$.
Let $g_t\in C^2(\T)$ such that $\int_\T g_t(x)\,dx=0$. Then $\{g_t;\ t\in [0,1]\}$ is a solution to \eqref{eq3.6} if 
\begin{equation*}
\frac{dg_t}{dt}+\Pi_{\rho_t}\Bigl(\partial_xg_t\, \partial_x\phi_t\Bigr)=0.
\end{equation*}

According to \eqref{eq3.11}, we have 
\begin{equation}\label{eq3.12}
\frac{dg_t}{dt}=-\partial_xg_t\, \partial_x\phi_t+\Bigl(\int_\T \partial_xg_t\,\partial_x\phi_t\,dx\Bigr)\, \hat\rho_t.
\end{equation}

From above equation, it is easy to see that $\dis \frac{d}{dt}\int_\T g_t(x)\,dx=0$ since $\int_\T \hat\rho_t(x)dx=1$. It follows 
that $\dis \int_\T g_t(x)\, dx=\int_\T g_0(x)\, dx=0$. In other words, $g_t$ is derivative of a function on $\T$ if the initial 
condition $g_0$ does. Put $\dis f_t=g_t(X_t)$. Then 

\begin{equation*}
\frac{df_t}{dt}=\Bigl(\int_\T \partial_x g_t\,\partial_x\phi_t\,dx\Bigr)\, \hat\rho_t(X_t).
\end{equation*}

Remark that 
\begin{equation*}
\int_\T \partial_x g_t\,\partial_x\phi_t\,dx=-\int_\T g_t\, \partial_x^2\phi_t\, dx=-\int_\T \frac{g_t\,\partial_x^2\phi_t}{\rho_t}\,\rho_t\, dx
=\int_\T g_t(X_t)\, \Bigr(\frac{\partial_x^2\phi_t}{\rho_t}\Bigr)(X_t)\, \rho\,dx.
\end{equation*}
Then $f_t$ satisfies the following equation
\begin{equation}\label{eq3.13}
\frac{df_t}{dt}=-\Bigl(\int_\T f_t\ \frac{\partial_x^2\phi_t}{\rho_t}(X_t)\, \rho\,dx\Bigr)\, \hat\rho_t(X_t).
\end{equation}
Define $\dis \Lambda(t,f) = -\Bigl(\int_\T f \ \frac{\partial_x^2\phi_t}{\rho_t}(X_t)\, \rho\,dx\Bigr)\, \hat\rho_t(X_t)$.
Then above equation can be written in the form

\begin{equation*}
\frac{df_t}{dt}=\Lambda(t, f_t).
\end{equation*}

\begin{lemma}\label{lemma3.4} We have
\begin{equation}\label{eq3.14}
||\Lambda(t, f)-\Lambda(t, g)||_{L^2(\rho\, dx)}\leq \Bigl(\sup_{t\in [0,1]}||\partial_x^2\phi_t||_\infty\Bigr)\, ||f-g||_{L^2(\rho\, dx)},\quad t\in [0,1].
\end{equation}
\end{lemma}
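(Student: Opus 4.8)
The plan is to estimate the difference $\Lambda(t,f)-\Lambda(t,g)$ directly from its definition. Writing
\begin{equation*}
\Lambda(t,f)-\Lambda(t,g)=-\Bigl(\int_\T (f-g)\,\frac{\partial_x^2\phi_t}{\rho_t}(X_t)\,\rho\,dx\Bigr)\,\hat\rho_t(X_t),
\end{equation*}
I would first bound the scalar factor using the Cauchy--Schwarz inequality in $L^2(\rho\,dx)$: the integral is $\langle f-g,\,(\partial_x^2\phi_t/\rho_t)(X_t)\rangle_{L^2(\rho\,dx)}$, so its absolute value is at most $\|f-g\|_{L^2(\rho\,dx)}\cdot\|(\partial_x^2\phi_t/\rho_t)(X_t)\|_{L^2(\rho\,dx)}$. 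The second norm is $\bigl(\int_\T |\partial_x^2\phi_t|^2/\rho_t^2\,(X_t)\,\rho\,dx\bigr)^{1/2}$, which after the change of variables $y=X_t(x)$ (so that $(X_t)_\#(\rho\,dx)=\rho_t\,dx$) equals $\bigl(\int_\T |\partial_x^2\phi_t(y)|^2/\rho_t(y)\,dy\bigr)^{1/2}$.

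Next I would estimate the $L^2(\rho\,dx)$-norm of the whole right-hand side. Since $\hat\rho_t(X_t)$ is a deterministic factor once the scalar is pulled out,
\begin{equation*}
\|\Lambda(t,f)-\Lambda(t,g)\|_{L^2(\rho\,dx)}\le \|f-g\|_{L^2(\rho\,dx)}\Bigl(\int_\T \frac{|\partial_x^2\phi_t|^2}{\rho_t}\,dy\Bigr)^{1/2}\Bigl(\int_\T \hat\rho_t(X_t)^2\,\rho\,dx\Bigr)^{1/2}.
\end{equation*}
Again changing variables, $\int_\T \hat\rho_t(X_t)^2\,\rho\,dx=\int_\T \hat\rho_t(y)^2\,\rho_t(y)\,dy$. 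Now I would invoke the explicit form \eqref{eq3.10}, $\hat\rho_t=\bigl(\int_\T dx/\rho_t\bigr)^{-1}\rho_t^{-1}$, which gives $\hat\rho_t^2\rho_t=\bigl(\int_\T dx/\rho_t\bigr)^{-2}\rho_t^{-1}$, hence $\int_\T\hat\rho_t^2\rho_t\,dy=\bigl(\int_\T dx/\rho_t\bigr)^{-1}$. Similarly $\int_\T |\partial_x^2\phi_t|^2/\rho_t\,dy\le \|\partial_x^2\phi_t\|_\infty^2\int_\T dx/\rho_t$. Multiplying, the factors $\int_\T dx/\rho_t$ cancel and the product of the two integrals is bounded by $\|\partial_x^2\phi_t\|_\infty^2$, yielding \eqref{eq3.14} after taking the supremum over $t\in[0,1]$.

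The only mild subtlety — and the step I would be most careful about — is the change of variables and the measurability/integrability justifying Fubini and Cauchy--Schwarz: one needs $\rho_t$ bounded below away from $0$ uniformly in $t\in[0,1]$, which holds because $X_t$ is a smooth flow of diffeomorphisms on the compact manifold $\T$ and $\rho>0$ is continuous, so $\rho_t=\rho(X_t^{-1})\,|(X_t^{-1})'|$ is continuous and strictly positive on the compact set $[0,1]\times\T$. Given that, all integrals above are finite and the manipulations are legitimate; the rest is the elementary cancellation just described, so there is no genuine obstacle, only bookkeeping.
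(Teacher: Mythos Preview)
Your proof is correct and follows essentially the same approach as the paper: both compute $\int_\T (\partial_x^2\phi_t/\rho_t)^2(X_t)\,\rho\,dx \leq \|\partial_x^2\phi_t\|_\infty^2\int_\T dx/\rho_t$ and $\int_\T \hat\rho_t(X_t)^2\,\rho\,dx=(\int_\T dx/\rho_t)^{-1}$ via the change of variables $(X_t)_\#(\rho\,dx)=\rho_t\,dx$, then apply Cauchy--Schwarz and observe the cancellation. Your additional remark on the positivity of $\rho_t$ is a welcome justification the paper leaves implicit.
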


\vskip 2mm
\begin{proof} Note that 
\begin{equation*}
\int_\T \Bigl(\frac{\partial_x^2\phi_t}{\rho_t}\Bigr)^2(X_t)\, \rho\,dx=\int_\T \frac{(\partial_x^2\phi_t)^2}{\rho_t}\, dx
\leq ||\partial_x^2\phi_t||_\infty^2\, \int_\T\frac{dx}{\rho_t},
\end{equation*}
and $\dis \int_\T \hat\rho_t(X_t)^2\, \rho\,dx=(\int_\T\frac{dx}{\rho_t})^{-1}$; it follows that
\begin{equation*}
\int_\T \Bigl|\int_\T f\ \frac{\partial_x^2\phi_t}{\rho_t}(X_t)\, \rho\,dx\Bigr|^2\, \hat\rho_t(X_t)^2\, \rho\,dx
\leq ||\partial_x^2\phi_t||_\infty^2\, ||f||_{L^2(\rho\,dx)}^2
\end{equation*}
and global Lipschitz condition \eqref{eq3.14} holds.
\end{proof}

By classical theory of ODE on Banach spaces, for $f_0\in L^2(\rho\,dx)$ given, there is a unique solution $f_t$ to 
Equation \eqref{eq3.13}. Now having this solution $(f_t)_{t\in [0,1]}$ in hand, we set
\begin{equation*}
g_t=f_t(X_t^{-1}).
\end{equation*}
We have 
\begin{equation*}
\int_\T |g_t|^2\rho_t\,dx=\int_\T |f_t|^2\,\frac{\rho_t}{\rho}(X_t)\, \tilde\rho_t\, dx,
\end{equation*}
where $\dis \tilde\rho_t =\frac{d(X_t^{-1})_\#(\rho\,dx)}{dx}$. It is known that 
$\dis\frac{\rho_t}{\rho}(X_t)\, \tilde\rho_t=\rho$. Hence 
\begin{equation}\label{eq3.15}
\int_\T |g_t|^2\rho_t\,dx=\int_\T |f_t|^2\,\rho\, dx.
\end{equation}
Now by a quite standard computation, we prove that 
\begin{equation*}
\frac{dg_t}{dt}=-\Bigl(\int_\T g_t\partial_x^2\phi_t\, dx\Bigr)\hat\rho_t-\partial_xg_t\, \partial_x\phi_t,
\end{equation*}
$g_t$ is a solution to Equation \eqref{eq3.12}, therefore $\int_\T g_t\, dx=0$.

\begin{proposition}\label{prop3.5} The solution $\{g_t;\ t\in [0,1\}]$ to Equation \eqref{eq3.12} 
preserves norms, that is
\begin{equation}\label{eq3.16}
\int_\T |g_t|^2\, \rho_t\, dx=\int_\T |g_0|^2\, \rho\,dx,\quad t\in [0,1].
\end{equation}
\end{proposition}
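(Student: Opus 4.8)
The plan is to pull everything back to the fixed Hilbert space $L^2(\rho\, dx)$ through the substitution $f_t = g_t(X_t)$, for which \eqref{eq3.15} already records the identity $\int_\T |g_t|^2\rho_t\, dx = \int_\T |f_t|^2\rho\, dx$ at every $t$. Since $X_0 = \mathrm{id}$ we have $f_0 = g_0$, so it is enough to show that $t\mapsto \|f_t\|_{L^2(\rho\, dx)}^2$ is constant. The function $f_t$ solves the Banach-space ODE $\dot f_t = \Lambda(t, f_t)$ with the globally Lipschitz field $\Lambda$ from Lemma \ref{lemma3.4}, so $t\mapsto f_t$ is $C^1$ into $L^2(\rho\, dx)$ and one may differentiate under the integral sign, using the explicit form of $\Lambda$ displayed just before Lemma \ref{lemma3.4}:
\[
\frac{d}{dt}\int_\T f_t^2\,\rho\, dx = 2\int_\T f_t\,\Lambda(t, f_t)\,\rho\, dx = -2\Bigl(\int_\T f_t\,\tfrac{\partial_x^2\phi_t}{\rho_t}(X_t)\,\rho\, dx\Bigr)\Bigl(\int_\T f_t\,\hat\rho_t(X_t)\,\rho\, dx\Bigr).
\]

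The crux is to see that the second factor vanishes. Changing variables $y = X_t(x)$ and using $c_t = (X_t)_\#(\rho\, dx)$, one gets $\int_\T f_t\,\hat\rho_t(X_t)\,\rho\, dx = \int_\T g_t\,\hat\rho_t\,\rho_t\, dy$; but by the definition \eqref{eq3.10} the product $\hat\rho_t\rho_t$ is the constant $\bigl(\int_\T dx/\rho_t\bigr)^{-1}$, so this integral equals a constant times $\int_\T g_t\, dx$, which is $0$ because $g_t$ is the derivative of a function on $\T$ (established after \eqref{eq3.12}). Hence $\frac{d}{dt}\int_\T f_t^2\rho\, dx = 0$, and \eqref{eq3.15} upgrades this to \eqref{eq3.16}. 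Equivalently — and this is presumably the ``quite standard computation'' alluded to in the text — one could differentiate $\int_\T g_t^2\rho_t\, dx$ directly: the term $\int_\T g_t^2\,\partial_t\rho_t\, dx$ coming from the continuity equation $\partial_t\rho_t = -\partial_x(\rho_t\partial_x\phi_t)$ cancels the transport contribution $-2\int_\T g_t\,\partial_xg_t\,\partial_x\phi_t\,\rho_t\, dx = \int_\T g_t^2\,\partial_x(\rho_t\partial_x\phi_t)\, dx$ arising from \eqref{eq3.12} after integration by parts, while the remaining projection-correction term is again proportional to $\int_\T g_t\,\hat\rho_t\,\rho_t\, dx = 0$.

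I do not expect a genuine obstacle. The only points needing care are the legitimacy of differentiating under the integral (covered by the $C^1$ regularity of $f_t$ from the ODE theory together with the smoothness of $\phi_t$ and the positivity/compactness keeping $\rho_t$ bounded away from $0$ and $\infty$) and the change-of-variables bookkeeping. The conceptual content is the one-line observation that $\hat\rho_t\rho_t$ is constant on $\T$, which is exactly what renders the correction term in \eqref{eq3.12} harmless for the $L^2(c_t)$-norm.
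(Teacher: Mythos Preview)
Your argument is correct and matches the paper's proof essentially line for line: reduce via \eqref{eq3.15} to showing $\int_\T f_t\,\dot f_t\,\rho\,dx=0$, then use \eqref{eq3.13} to factor this as a constant times $\int_\T f_t\,\hat\rho_t(X_t)\,\rho\,dx=\int_\T g_t\,\hat\rho_t\,\rho_t\,dx$, which vanishes because $\hat\rho_t\rho_t$ is constant and $\int_\T g_t\,dx=0$. Your additional remarks on regularity and the alternative direct computation with the continuity equation are extra but do not depart from the paper's approach.
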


\vskip 2mm
\begin{proof} By \eqref{eq3.15}, it is sufficient to check $\dis \int_\T f_t\, \frac{df_t}{dt}\, \rho\,dx=0$. But by 
\eqref{eq3.13}, we compute 
\begin{equation*}
\int_\T \hat\rho_t(X_t)\, f_t\, \rho\,dx=\int_\T \hat\rho_t(X_t)g_t(X_t)\,\rho\,dx
=\int_\T \hat\rho_t g_t\rho_t\, dx=\frac{\int_\T g_t\, dx}{\int_\T \frac{dx}{\rho_t}},
\end{equation*}
this last term is equal to $0$. 
\end{proof}
Finally we get the main result of this section.

\begin{theorem} For any $g_0\in \TT_{\rho dx}$ given, there is a unique solution $g_t\in \TT_{\rho_t dx}$
to parallel translation equation \eqref{eq3.12} such that $\int_\T |g_t|^2\, \rho_t\, dx=\int_\T |g_0|^2\rho\,dx$ for any 
$t\in [0,1]$.
\end{theorem}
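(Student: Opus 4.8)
The proof is essentially an assembly of the ingredients prepared above, and I would organize it as follows. The starting observation is that, although \eqref{eq3.12} looks like a PDE for $g_t$ because of the transport term $\partial_x g_t\,\partial_x\phi_t$, the substitution $f_t = g_t(X_t)$ turns it into the genuine ODE \eqref{eq3.13} on the Hilbert space $H:=L^2(\rho\,dx)$, in which the unknown $f_t$ enters only through the scalar $\int_\T f_t\,\frac{\partial_x^2\phi_t}{\rho_t}(X_t)\,\rho\,dx$; in particular no spatial regularity of $g_0$ beyond $g_0\in H$ is needed. Since $X_0=\mathrm{id}$ we have $f_0=g_0$. By Lemma \ref{lemma3.4} the map $f\mapsto\Lambda(t,f)$ is globally Lipschitz on $H$ uniformly in $t\in[0,1]$, and $t\mapsto\Lambda(t,f)$ is continuous; hence the classical Cauchy--Lipschitz (Picard) theorem on Banach spaces produces a unique global solution $(f_t)_{t\in[0,1]}\subset H$ with $f_0=g_0$.

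Next I would push this solution back to the curve by setting $g_t:=f_t(X_t^{-1})$. By the change-of-variables identity \eqref{eq3.15}, $\int_\T|g_t|^2\rho_t\,dx=\int_\T|f_t|^2\rho\,dx<+\infty$, so $g_t\in L^2(\rho_t\,dx)$. To see that in fact $g_t\in\TT_{\rho_t\,dx}$ one only has to check $\int_\T g_t\,dx=0$; using \eqref{eq3.13} together with $\int_\T\hat\rho_t\,dx=1$ and $(X_t)_\#(\rho\,dx)=\rho_t\,dx$ one finds $\frac{d}{dt}\int_\T g_t\,dx=0$, and since $g_0\in\TT_{\rho\,dx}$ has vanishing mean the same holds for all $t$; by the characterization of $\TT_\mu$ on the torus this is exactly $g_t\in\TT_{\rho_t\,dx}$. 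For the norm identity I would combine \eqref{eq3.15} with the computation in the proof of Proposition \ref{prop3.5}: there it is shown that $\int_\T f_t\,\frac{df_t}{dt}\,\rho\,dx=0$, so $t\mapsto\int_\T|f_t|^2\rho\,dx$ is constant, whence $\int_\T|g_t|^2\rho_t\,dx=\int_\T|f_0|^2\rho\,dx=\int_\T|g_0|^2\rho\,dx$, which is \eqref{eq3.16}.

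Uniqueness for $g_t$ follows by reversing the correspondence: the chain of identities relating \eqref{eq3.12} and \eqref{eq3.13} is reversible, so if $\tilde g_t\in\TT_{\rho_t\,dx}$ is any solution of \eqref{eq3.12} with $\tilde g_0=g_0$, then $\tilde f_t:=\tilde g_t(X_t)$ solves \eqref{eq3.13} with $\tilde f_0=g_0$; by uniqueness of the ODE solution $\tilde f_t=f_t$, hence $\tilde g_t=g_t$.

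I expect the only genuinely delicate point to be a matter of formulation rather than of analysis: fixing the precise sense in which $g_t$ solves \eqref{eq3.12} when $g_0$ is merely in $\TT_{\rho\,dx}$, so that $\partial_x g_0$ need not exist pointwise. Here I would use the weak formulation \eqref{eq3.7}--\eqref{eq3.8} already adopted in Theorem \ref{th3.2}: for smooth mean-zero $g_0$ the formula $g_t=f_t(X_t^{-1})$ gives a classical solution, and for general $g_0\in\TT_{\rho\,dx}$ one either checks \eqref{eq3.8} directly by testing $g_t=f_t(X_t^{-1})$ against $f\in C^\infty(\T)$ --- a computation needing only $f_t\in L^2$ --- or approximates $g_0$ by smooth mean-zero functions and passes to the limit, the $L^2$-continuous dependence of $f_t$ on $f_0$ being guaranteed by Lemma \ref{lemma3.4}. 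Everything else is a direct combination of Lemma \ref{lemma3.4}, Proposition \ref{prop3.5} and the change-of-variables relations already recorded.
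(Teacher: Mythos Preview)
Your proposal is correct and follows essentially the same route as the paper: transform \eqref{eq3.12} into the Hilbert-space ODE \eqref{eq3.13} via $f_t=g_t(X_t)$, invoke Lemma \ref{lemma3.4} and Picard to get existence and uniqueness of $f_t$, set $g_t=f_t(X_t^{-1})$, verify $\int_\T g_t\,dx=0$, and read off the norm identity from Proposition \ref{prop3.5} together with \eqref{eq3.15}. Your added remark on the weak formulation \eqref{eq3.7}--\eqref{eq3.8} for general $g_0\in\TT_{\rho\,dx}$ is a point the paper leaves implicit; one small clarification: the cleanest way to obtain $\frac{d}{dt}\int_\T g_t\,dx=0$ is, as in the paper, to first check that $g_t$ satisfies \eqref{eq3.12} and integrate that equation over $\T$, rather than going directly from \eqref{eq3.13}.
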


\section{Stochastic parallel translations}\label{sect4}

Let $\{\phi_0, \phi_1, \ldots, \phi_N\}$ be a finite family of smooth function on $M$ and $\{\mu_t;\ t\in [0,1]\}$ a solution
to SDE  \eqref{eq2.8}, which comes from SDE on $M$

\begin{equation*}
dX_t=\sum_{i=0}^N\nabla\phi_i(X_t)\circ dB_t^i,\quad\hbox{\rm with }\ B_t^0=t.
\end{equation*}

According to \eqref{eq3.6}, if a stochastic process $\{\nabla\Psi_t;\ t\in [0,1]\}$ is parallel along $\{\mu_t;\ t\in [0,1]\}$, 
Equation \eqref{eq3.6} would be replaced by the following formal Stratanovich SDE

\begin{equation}\label{eq4.1}
\circ d_t(\nabla\Psi_t)=-\sum_{i=0}^N \Pi_{\mu_t}\Bigl( \nabla_{\nabla\phi_i}\nabla\Psi_t\Bigr)\circ dB_t^i.
\end{equation}
Using notation \eqref{eq3.2}, the above equation becomes
\begin{equation}\label{eq4.2}
\circ d_t V_{\Psi_t}=-\sum_{i=0}^N \bigl(\bn_{V_{\phi_i}}V_{\Psi_t}\bigr)\circ dB_t^i.
\end{equation}

\textcolor{black} {If  \eqref{eq4.2} has a smooth solution, it preserves norm. More precisely, we have the following proposition.}

\begin{proposition}\label{prop4.1}
Suppose that $\{\nabla\Psi_t;\ t\in [0,1]\}$ is  a solution to Equation \eqref{eq4.2}, then for any $t\in [0,1]$, 
\begin{equation}\label{eq4.3}
\int_M |\nabla\Psi_t(x)|^2\mu_t(dx)=\int_M |\nabla\Psi_0(x)|^2\, \mu(dx).
\end{equation}
\end{proposition}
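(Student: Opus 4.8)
The plan is to transfer the computation to the base manifold $M$ and to use that stochastic parallel translation along paths in $M$ is an isometry. Since $\mu_t=(X_t)_\#\mu$, a change of variables gives
\[
\int_M|\nabla\Psi_t(x)|^2\,\mu_t(dx)=\int_M|\nabla\Psi_t(X_t(x))|_{T_{X_t(x)}M}^2\,\mu(dx),
\]
where the measure $\mu$ on the right is fixed, so it suffices to compute the Stratonovich differential of $t\mapsto|\nabla\Psi_t(X_t(x))|^2$ and then integrate against $\mu(dx)$. Put $Z_t(x)=\nabla\Psi_t(X_t(x))\in T_{X_t(x)}M$ and let $\tau_t$ be the stochastic parallel translation along the $M$-valued semimartingale $\{X_s(x);\ s\in[0,t]\}$; write $D_t:=\tau_t\circ d_t\circ\tau_t^{-1}$ for the associated covariant Stratonovich differential. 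Because $\tau_t$ is a linear isometry and the Levi--Civita connection of $M$ is metric, the Stratonovich chain rule yields $\circ d_t|Z_t(x)|^2=2\langle D_tZ_t(x),Z_t(x)\rangle$, so it remains to prove that $\int_M\langle D_tZ_t,Z_t\rangle\,\mu(dx)=0$.

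Next I would apply the covariant It\^o-Wentzell (Stratonovich) formula to the composition $Z_t=(\nabla\Psi_t)(X_t)$, in which $\nabla\Psi_t$ is itself a (smooth-in-$x$) semimartingale-valued vector field and $dX_t=\sum_{i=0}^N\nabla\phi_i(X_t)\circ dB_t^i$ with $B_t^0=t$; this gives
\[
D_tZ_t=(\circ d_t\nabla\Psi_t)(X_t)+\sum_{i=0}^N\bigl(\nabla_{\nabla\phi_i}\nabla\Psi_t\bigr)(X_t)\circ dB_t^i .
\]
Substituting the defining equation \eqref{eq4.1} (which is \eqref{eq4.2} rewritten through \eqref{eq3.2}) for $\circ d_t\nabla\Psi_t$, the full term $\nabla_{\nabla\phi_i}\nabla\Psi_t$ and its tangential projection $\Pi_{\mu_t}(\nabla_{\nabla\phi_i}\nabla\Psi_t)$ combine to leave only the normal component, so that
\[
D_tZ_t=\sum_{i=0}^N\bigl(\Pi_{\mu_t}^\perp(\nabla_{\nabla\phi_i}\nabla\Psi_t)\bigr)(X_t)\circ dB_t^i .
\]
Inserting this and pushing the integral forward along $X_t$, we obtain
\[
\circ d_t\!\int_M|\nabla\Psi_t|^2\,\mu_t(dx)=2\sum_{i=0}^N\Bigl(\int_M\bigl\langle\Pi_{\mu_t}^\perp(\nabla_{\nabla\phi_i}\nabla\Psi_t),\ \nabla\Psi_t\bigr\rangle\,\mu_t(dx)\Bigr)\circ dB_t^i=0,
\]
since $\nabla\Psi_t\in\TT_{\mu_t}$ (being the gradient of a smooth function, it lies in $\TT_{\mu_t}$ by \eqref{eq1.1}), whereas $\Pi_{\mu_t}^\perp$ takes values in the $L^2(\mu_t)$-orthogonal complement of $\TT_{\mu_t}$. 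Integrating in $t$ then yields \eqref{eq4.3}.

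The step I expect to be the main obstacle is the rigorous justification of the covariant It\^o-Wentzell formula for the vector-field-valued semimartingale $\nabla\Psi_t$ composed with the stochastic flow $X_t$, together with the interchange of $\circ d_t$ and $\int_M(\cdot)\,\mu(dx)$; granted the smoothness in $x$ of $\Psi_t$ assumed in the statement and the compactness of $M$, this is classical and may be based on the covariant Stratonovich calculus on $M$ (see e.g. \cite{Elworthy, IW}). An alternative route avoiding It\^o-Wentzell is to mimic the proof of Proposition \ref{prop3.3}: expand the increment $\int_M|\nabla\Psi_{t+\eps}|^2\mu_{t+\eps}(dx)-\int_M|\nabla\Psi_t|^2\mu_t(dx)$ by means of the parallel translation $\tau_\eps$ along $\{X_{t+\eps,t}\}$ and of \eqref{eq4.1}; the delicate point there is that Stratonovich increments are of order $\sqrt{\eps}$, so the quadratic terms must also be retained, and it is precisely the Stratonovich form of \eqref{eq4.2} that makes them cancel.
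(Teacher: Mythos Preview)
Your argument is correct and reaches the same endpoint as the paper's own (explicitly heuristic) proof---namely the vanishing of $\int_M\langle\nabla\Psi_t,\Pi_{\mu_t}^\perp(\nabla_{\nabla\phi_i}\nabla\Psi_t)\rangle\,\mu_t(dx)$---but the route is genuinely different. The paper does \emph{not} pull back along $X_t$ or invoke stochastic parallel translation on $M$; instead it keeps the integral in the form $\int_M|\nabla\Psi_t|^2\rho_t\,\mu(dx)$ with $\rho_t=d\mu_t/d\mu$, uses the Stratonovich SPDE $\circ d_t\rho_t=-\sum_i\div_{\mu_t}(\nabla\phi_i)\,\rho_t\circ dB_t^i$, and then the divergence theorem on $M$ converts the term $\int_M|\nabla\Psi_t|^2\div_{\mu_t}(\nabla\phi_i)\,\mu_t(dx)$ into $2\int_M\langle\nabla\Psi_t,\nabla_{\nabla\phi_i}\nabla\Psi_t\rangle\,\mu_t(dx)$; combining this with \eqref{eq4.1} produces the orthogonality. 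Your approach replaces this density-plus-integration-by-parts step by the covariant It\^o--Wentzell formula for the vector-field valued process $Z_t=(\nabla\Psi_t)(X_t)$ and the isometry of $\tau_t$. Both arguments are heuristic at the same level and hinge on the same final orthogonality; your version has the advantage of making the connection with pointwise parallel transport on $M$ explicit and of never mentioning $\rho_t$, while the paper's version is lighter on machinery since it needs only scalar Stratonovich calculus and the divergence identity, not a covariant It\^o--Wentzell formula for sections of $TM$.
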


\vskip 2mm
\begin{proof} We give a heuristic proof of \eqref{eq4.3}. Let $\dis\rho_t=\frac{d\mu_t}{d\mu}$ be the density with respect 
to the initial probability measure $\mu$, then $\{\rho_t;\ t\in [0,1]\}$ satisfies the following SPDE:

\begin{equation*}
\circ d_t\rho_t
=-\sum_{i=0}^N \Bigl(\div_{\mu_t}(\nabla\phi_i)\rho_t \Bigr)\, \circ dB_t^i.
\end{equation*}
Now 
\begin{equation*}
\circ d_t\int_M  |\nabla\Psi_t|^2\, \rho_t\, \mu(dx)=2\int_M \<\nabla\Psi_t, \circ d_t\nabla\Psi_t\>\rho_t\, \mu(dx)
+\int_M \<\nabla\Psi_t, \nabla\Psi_t\>\, \circ d_t\rho_t\ \mu(dx).
\end{equation*}
Note that 
\begin{equation*}
\int_M \<\nabla\Psi_t, \nabla\Psi_t\>\, \div_{\mu_t}(\nabla\phi_i)\rho_t\,\mu(dx)
=2\int_M \<\nabla\Psi_t, \nabla_{\nabla\phi_i}\nabla\Psi_t\>\ \mu_t(dx).
\end{equation*}
Combining these equalities yields
\begin{equation*}
\circ d_t\int_M |\nabla\Psi_t|^2\mu_t(dx)
=2\int_M\<\nabla\Psi_t, \circ d_t\nabla\Psi_t +\sum_{i=0}^N \nabla_{\nabla\phi_i}\nabla\Psi_t\circ dB_t^i\>\ \mu_t(dx)=0.
\end{equation*}
We get \eqref{eq4.3}. 
\end{proof}

The  weak form of \eqref{eq4.2}, the stochastic counterpart of \eqref{eq3.8}, would be  
\begin{equation*} 
d_t\int_M \< \nabla f, \nabla\Psi_t\>\, \mu_t(dx) 
=\sum_{i=0}^N \Bigl( \int_M\<\nabla_{\nabla\phi_i}(\nabla f), \nabla\Psi_t\>\, \mu_t(dx)\Bigr)\circ dB_t^i .
\end{equation*}
Since $\dis \nabla_{\nabla\phi_i}(\nabla f)$ is not a vector field of gradient type, the last term in above equality 
really is 
\begin{equation*}
\sum_{i=0}^N \Bigl( \int_M\<\Pi_{\mu_t}\bigl(\nabla_{\nabla\phi_i}(\nabla f)\bigr), \nabla\Psi_t\>\, \mu_t(dx)\Bigr)\circ dB_t^i .
\end{equation*}

\begin{proposition}\label{prop4.2} For any $f\in C^3(M)$, set 
\begin{equation}\label{eq4.4}
R_t^f=\sum_{i=1}^N \Pi_{\mu_t}\Bigl(\nabla_{\nabla\phi_i}\Pi_{\mu_t} \bigl(\nabla_{\nabla_{\phi_i}}(\nabla f)\bigr) \Bigr),
\end{equation}

\begin{equation}\label{eq4.5}
S_t=\sum_{i=1}^N\Pi_{\mu_t}\Bigl((\Delta_{\mu_t}\phi_i)\,\Pi_{\mu_t}^\perp\bigl(\bigl(\nabla_{\nabla_{\phi_i}}(\nabla f)\bigr)  \bigr) \Bigr).
\end{equation}
Then the stochastic counterpart of \eqref{eq3.8} has the following form

\begin{equation*}
\begin{split}
\int_M \<\nabla f, \nabla\Psi_t\>\, \mu_t(dx)=& \int_M \<\nabla f,\nabla\Psi_0\>\, \mu(dx) 
+ \sum_{i=0}^N \int_0^t \Bigl(\int_M \< \nabla_{\nabla \phi_i}(\nabla f),\ \nabla\Psi_s\>\, \mu_s(dx) \Bigr)\, dB_s^i\\
&+  \frac{1}{2}\, \int_0^t \Bigl(\int_M \<R_s^f-S_s^f,\ \nabla\Psi_s\> \, \mu_s(dx)\Bigr)\,ds,
\end{split}
\end{equation*}
 or instrinsically
 
 \begin{equation*}
 \begin{split}
 \<V_f, V_{\Psi_t}\>_{\TT_t}
 =&\<V_f, V_{\Psi_0}\>_{\TT_\mu}+ \sum_{i=0}^N \int_0^t \< \bn_{V_{\phi_i}}V_f,\ V_{\Psi_s}\>_{\TT_{\mu_s}}\, dB_s^i \\
 &+ \frac{1}{2}\sum_{i=1}^N \int_0^t \<\bn_{V_{\phi_i}}\bn_{V_{\phi_i}}V_f, \ V_{\Psi_s}\>_{\TT_{\mu_s}}\ ds
 -\frac{1}{2}\int_0^t \<S_s^f, V_{\Psi_s}\>_{\TT_{\mu_s}}\ ds.
 \end{split}
 \end{equation*}

\end{proposition}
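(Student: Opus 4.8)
The plan is to derive the weak form of \eqref{eq4.2} by starting from the It\^o--Stratanovich conversion of the process $t \mapsto \langle \nabla f, \nabla\Psi_t\rangle_{L^2(\mu_t)}$, and then tracking the correction term that appears when one passes from Stratanovich to It\^o form. Concretely, I would first apply the Stratanovich chain rule to the pairing $\int_M \langle \nabla f, \nabla\Psi_t\rangle\,\mu_t(dx)$, where both $\mu_t$ and $\nabla\Psi_t$ are Stratanovich semimartingales: $\mu_t$ governed by the continuity equation with vector fields $\nabla\phi_i$ (so that $\circ d_t\,\mu_t$ contributes $-\sum_i \div_{\mu_t}(\nabla\phi_i)\,\mu_t \circ dB^i_t$ in density form) and $\nabla\Psi_t$ governed by \eqref{eq4.2}, i.e. $\circ d_t\,\nabla\Psi_t = -\sum_i \Pi_{\mu_t}(\nabla_{\nabla\phi_i}\nabla\Psi_t)\circ dB^i_t$. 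By exactly the computation already carried out in the proof of Proposition~\ref{prop4.1} (the cancellation between the term coming from $\circ d_t\,\mu_t$ and the term coming from $\circ d_t\,\nabla\Psi_t$), the Stratanovich differential of the pairing collapses to
\begin{equation*}
\circ d_t \int_M \langle \nabla f, \nabla\Psi_t\rangle\,\mu_t(dx) = \sum_{i=0}^N \Bigl(\int_M \langle \Pi_{\mu_t}(\nabla_{\nabla\phi_i}\nabla f),\ \nabla\Psi_t\rangle\,\mu_t(dx)\Bigr)\circ dB^i_t,
\end{equation*}
which is precisely the weak Stratanovich form announced just before the statement.

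Next I would convert this Stratanovich SDE into It\^o form. The $B^0_t = t$ term carries no correction; for $i = 1,\dots,N$ the Stratanovich correction is one half the bracket of the diffusion coefficient $G^i_t := \int_M \langle \Pi_{\mu_t}(\nabla_{\nabla\phi_i}\nabla f),\ \nabla\Psi_t\rangle\,\mu_t(dx)$ with $B^i_t$, i.e. $\tfrac12 \sum_{i=1}^N (\circ d_t G^i_t)\cdot dB^i_t$, which amounts to differentiating $G^i_t$ along the $i$-th Stratanovich flow and keeping the $dB^i_t \cdot dB^i_t = dt$ part. Here one uses that $G^i_t$ depends on $t$ through three ingredients: $\mu_t$ (equivalently $\rho_t$), $\nabla\Psi_t$, and the projection $\Pi_{\mu_t}$ itself. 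The derivative of $\Pi_{\mu_t}$ along the flow of $\nabla\phi_i$ is supplied by Theorem~\ref{eq3.4} (the formula $\frac{d}{dt}\Pi_{c_t}(Z) = -\Pi_{c_t}(\Delta_{c_t}\phi_t\,\Pi_{c_t}^\perp(Z))$), applied with $Z = \nabla_{\nabla\phi_i}(\nabla f)$; this is what produces the term $S^f_t$ of \eqref{eq4.5}. The derivative of the pairing $\langle \Pi_{\mu_t}(\cdot),\nabla\Psi_t\rangle$ coming from $\mu_t$ and from the outer $\nabla_{\nabla\phi_i}$ applied once more, combined with the evolution of $\nabla\Psi_t$ from \eqref{eq4.2}, produces the term $R^f_t$ of \eqref{eq4.4} — this is the same bookkeeping as in Proposition~\ref{prop3.3}, now with the extra projection inside. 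Assembling these gives
\begin{equation*}
\tfrac12 \sum_{i=1}^N (\circ d_t G^i_t)\cdot dB^i_t = \tfrac12 \int_M \langle R^f_t - S^f_t,\ \nabla\Psi_t\rangle\,\mu_t(dx)\, dt,
\end{equation*}
and integrating from $0$ to $t$ yields the first displayed identity in the statement.

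For the intrinsic reformulation, I would simply rewrite each term using the dictionary already fixed in the paper: $\Pi_{\mu}(\nabla_{\nabla\phi}\nabla f)$ is exactly $\bn_{V_\phi}V_f$ by \eqref{eq3.2}, so the martingale coefficient $\int_M \langle \nabla_{\nabla\phi_i}(\nabla f), \nabla\Psi_s\rangle\,\mu_s(dx) = \langle \Pi_{\mu_s}(\nabla_{\nabla\phi_i}\nabla f), \nabla\Psi_s\rangle_{L^2(\mu_s)} = \langle \bn_{V_{\phi_i}}V_f, V_{\Psi_s}\rangle_{\TT_{\mu_s}}$ (the projection being free since $\nabla\Psi_s \in \TT_{\mu_s}$), and the $R^f_s$ term, being $\sum_i \Pi_{\mu_s}(\nabla_{\nabla\phi_i}\Pi_{\mu_s}(\nabla_{\nabla\phi_i}\nabla f))$, is by definition $\sum_i \bn_{V_{\phi_i}}\bn_{V_{\phi_i}}V_f$. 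The $S^f_s$ term has no pure-gradient representative (it is the genuinely new curvature-type correction), so it is left as is. I expect the main obstacle to be the Stratanovich-to-It\^o conversion step: one must carefully identify every source of $t$-dependence in the coefficient $G^i_t$ — the density $\rho_t$, the field $\nabla\Psi_t$, and critically the projection operator $\Pi_{\mu_t}$ — and check that the three contributions organize exactly into $R^f_t - S^f_t$ without leftover terms; the appearance of $\Delta_{\mu_t}\phi_i$ via Theorem~\ref{eq3.4} and its interplay with the two nested projections is the delicate bookkeeping. A secondary (but in this "heuristic" style, acknowledgeable) issue is regularity: $\rho_t$ is only an $L^2$-type semimartingale, so the manipulations are formal unless one either assumes smoothness of the solution (as is done implicitly for Proposition~\ref{prop4.1}) or interprets everything in the weak sense against test functions $f \in C^3(M)$, which is why the statement is phrased through the pairing with $\nabla f$.
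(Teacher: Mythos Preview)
Your proposal is correct and follows the same approach as the paper: the paper's proof is extremely terse, simply pointing back to Proposition~\ref{prop3.3} (equation~\eqref{eq3.9}) applied with $Z=\nabla_{\nabla\phi_i}(\nabla f)$, where the two terms on the right-hand side of \eqref{eq3.9} produce respectively $R_t^f$ and $-S_t^f$ as the Stratanovich--It\^o correction. You have spelled out in detail exactly this mechanism (the projection-derivative formula~\eqref{eq3.4} for $S_t^f$, the covariant-derivative term for $R_t^f$), so there is no substantive difference.
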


\vskip 2mm
\begin{proof}
We only remark that the right hand of \eqref{eq3.9} is the sum of two terms, the first term involving variations of orthogonal 
projection along the time $t$, while the second one provides the term $R_t^f$. 
\end{proof}

\vskip 2mm
Now we are going to see what happens in the case of $\P_{2,\infty}(\textcolor{black}{\T})$. For simplicity, 
we consider SDE on $\T$,
\begin{equation}\label{eq4.6}
dX_t =\partial_x\phi_t(X_t)\circ dB_t,
\end{equation}
where $B_t$ is a one-dimensional Brownian motion.
\vskip 2mm
Let $d\mu=\rho\, dx$ and $\mu_t=(X_t)_\#\mu$; set $\dis \rho_t = \frac{d\mu_t}{dx}$ the density with respect to $dx$.
Suppose that $\{\partial_x\Psi_t;\ t\in [0,1]\}$ is a solution to the equation of parallel translations:
\begin{equation}\label{eq4.7}
d_t\partial_x\Psi_t=-\Pi_{\rho_t}\bigl( \partial_x^2\Psi_t\ \partial_x\phi_t)\, dB_t
+ \Big(\frac{1}{2}R_t^{\Psi_t}+\frac{1}{2}S_t^{\Psi_t}\Bigr)\, dt.
\end{equation}
We first explicit $R_t$ and $S_t$ in this special case. Using the expression of $\Pi_\rho$ (see \eqref{eq3.11}), we have

\begin{equation*}
\begin{split}
R_t^{\Psi_t}=&\partial_x\bigl(\partial_x^2\Psi_t\, \partial_x\phi_t\bigr) \partial_x\phi_t
-\Bigl(\int_\T \partial_x^2\Psi_t\,\partial_x\phi_t\, dx\Bigr)\,\partial_x\hat\rho_t\, \partial_x\phi_t\\
&-\Bigl(\int_\T \partial_x\bigl(\partial_x^2\Psi_t\,\partial_x\phi_t \bigr)\, \partial_x\phi_t\, dx\Bigr)\textcolor{black}{\hat{\rho}_t}
+\Bigl(\int_\T \partial_x^2\Psi_t\, \partial_x\phi_t\,dx \Bigr)\Bigl(\int_\T \partial_x\hat\rho_t\, \partial_x\phi_t\ dx \Bigr)\textcolor{black}{\hat{\rho}_t}\\
&=I_1+I_2+I_3+I_4
\end{split}
\end{equation*}
respectively, and 
\begin{equation*}
\begin{split}
S_t^{\Psi_t}=& \bigl(\partial_x^2\phi_t+\partial_x\log(\rho_t)\,\partial_x\phi_t\bigr)
\Bigl(\int_\T \partial_x^2\Psi_t\, \partial_x\phi_t \ dx\Bigr)\, \hat\rho_t\\
&\textcolor{black}{-} \Bigl(\int_\T \partial_x^2\Psi_t\, \partial_x\phi_t\, dx \Bigr)
\Bigl(\int_\T \partial_x^2\phi_t\, \hat\rho_t\, dx \Bigr)\textcolor{black}{\hat{\rho}_t}\\
&\textcolor{black}{-\Bigl(\int_\T \partial_x^2\Psi_t\, \partial_x\phi_t\,dx \Bigr)} 
\Bigl(\int_\T \partial_x\log(\rho_t)\, \partial_x\phi_t\, \hat\rho_t\, dx\Bigr)\textcolor{black}{\hat{\rho}_t}\\
&=J_1+J_2+J_3+J_4
\end{split}
\end{equation*}
respectively.  We have
\begin{equation*}
I_2+J_2 =-\textcolor{black}{2\Bigl(\int_\T \partial_x^2\Psi_t\, \partial_x\phi_t\, dx \Bigr)\,\partial_x\phi_t
\partial_x\hat\rho_t},
\end{equation*}
and
\begin{equation*}
I_4+J_4 =\textcolor{black}{2\Bigl(\int_\T \partial_x^2\Psi_t\, \partial_x\phi_t\, dx \Bigr)\, \Bigl(\int_\T \partial_x\phi_t\,
\partial_x\hat\rho_t dx\Bigr)\hat{\rho}_t}.
\end{equation*}

Therefore we get the following expression for $R_t^{\Psi_t}\textcolor{black}{+}S_t^{\Psi_t}$: 
\begin{equation}\label{eq4.8}
\begin{split}
&R_t^{\Psi_t}\textcolor{black}{+}S_t^{\Psi_t}=\partial_x \bigl(\partial_x^2\Psi_t\, \partial_x\phi_t\bigr)\, \partial_x\phi_t
-\Bigl(\int_\T \partial_x\bigl(\partial_x^2\Psi_t\,\partial_x\phi_t \bigr)\, \partial_x\phi_t\, dx\Bigr)\textcolor{black}{\hat{\rho}_t}\\
& 
\textcolor{black}{+}\Bigl(\int_\T \partial_x^2\Psi_t\, \partial_x\phi_t \ dx\Bigr)\, \partial_x^2\phi_t\,\hat\rho_t
\textcolor{red}{-}\Bigl(\int_\T \partial_x^2\Psi_t\, \partial_x\phi_t \ dx\Bigr)\Bigl(\int_\T \partial_x^2\phi_t\, \hat\rho_t\, dx \Bigr)\textcolor{black}{\hat{\rho}_t}\\
&\textcolor{black}{-2\Bigl(\int_\T \partial_x^2\Psi_t\, \partial_x\phi_t\, dx \Bigr)\,\partial_x\phi_t
	\partial_x\hat\rho_t+2\Bigl(\int_\T \partial_x^2\Psi_t\, \partial_x\phi_t\, dx \Bigr)\, \Bigl(\int_\T \partial_x\phi_t\,
	\partial_x\hat\rho_t dx\Bigr)\hat{\rho}_t}.
\end{split}
\end{equation}

\vskip 2mm
Let $\dis f_t=\partial\Psi_t(X_t)$. Then by Kunita-It\^o-Wentzell formula, we get
\begin{equation*}
\begin{split}
d_tf_t=&-\Bigl(\int_\T \partial_x\Psi_t\, \partial_x^2\phi_t\, dx\Bigr)\, \hat\rho_t(X_t)\, dB_t
-\frac{1}{2} \Bigl(\int_\T \partial_x\Psi_t\, \partial_x^2\phi_t\, dx\Bigr)\, (\partial_x^2\phi_t)(X_t) \hat\rho_t(X_t)\, dt\\
&-\frac{1}{2}\Bigl(\int_\T \partial_x\Psi_t\, \partial_x\bigl( \partial_x^2\phi_t\, \partial_x\phi_t\bigr)\,dx\Bigr)\hat\rho_t(X_t)\,dt
+\frac{3}{2}\Bigl(\int_\T\partial_x\Psi_t\,\partial_x^2\phi_t\, dx \Bigr)\Bigl( \int_\T \partial_x^2\phi_t\, \hat\rho_t\, dx\Bigr)\hat\rho_t(X_t)dt.
\end{split}
\end{equation*}

As in Section \ref{sect3}, we remark that 
\begin{equation*}
\int_\T \partial_x\Psi_t\, \partial_x^2\phi_t\, dx=\int_\T f_t\times \frac{\partial_x^2\phi_t}{\rho_t}(X_t)\ \rho\,dx, 
\end{equation*}
and aslo
\begin{equation*}
\int_\T \partial_x\Psi_t\, \partial_x\bigl(\partial_x^2\phi_t\,\partial_x\phi_t\bigr)\, dx
=\int_\T f_t\times \frac{\partial_x (\partial_x^2\phi_t\, \partial_x\phi_t)}{\rho_t}(X_t)\ \rho\,dx.
\end{equation*}

We introduce two notations
\begin{equation}\label{eq4.9}
a_t=\frac{\partial_x^2\phi_t}{\rho_t}(X_t),\quad b_t=\frac{\partial_x (\partial_x^2\phi_t\, \partial_x\phi_t)}{\rho_t}(X_t).
\end{equation}
Then $\{f_t; t\in [0,1]\}$ satisfies the following equation
\begin{equation}\label{eq4.10}
\begin{split}
d_tf_t=&-\Bigl(\int_\T f_t a_t\, \rho dx\Bigr)\hat\rho_t(X_t)\, dB_t 
-\frac{1}{2}\Bigl(\int_\T f_t a_t\, \rho dx\Bigr)\bigl(\hat\rho_t\,\partial_x^2\phi_t\bigr)(X_t)\, dt\\
&-\frac{1}{2}\Bigl(\int_\T f_tb_t\, \rho dx\Bigr)\hat\rho_t(X_t)\, dt
+\frac{3}{2}\Bigl(\int_\T f_ta_t \rho dx\Bigr)\Bigl(\int_\T \partial_x^2\phi_t\hat\rho_t\, dx\Bigr)\, \hat\rho_t(X_t)\, dt.
\end{split}
\end{equation}

Let $\dis \Lambda(t,f)=-\Bigl(\int_\T f a_t\, \rho\, dx\Bigr)\, \hat\rho_t(X_t)$ and 
\begin{equation*}
\begin{split}
\Theta(t,f)=&-\frac{1}{2}\Bigl(\int_\T f_t a_t\, \rho dx\Bigr)\bigl(\hat\rho_t\,\partial_x^2\phi_t\bigr)(X_t)
-\frac{1}{2}\Bigl(\int_\T f_tb_t\, \rho dx\Bigr)\hat\rho_t(X_t)\\
&+\frac{3}{2}\Bigl(\int_\T f_ta_t \rho dx\Bigr)\Bigl(\int_\T \partial_x^2\phi_t\hat\rho_t\, dx\Bigr)\, \hat\rho_t(X_t).
\end{split}
\end{equation*}

We put $d_tf_t$ in the form
\begin{equation*}
d_tf_t=\Lambda(t, f_t)\, dB_t+\Theta(t, f_t)\, dt.
\end{equation*}

\begin{lemma}\label{lemma4.4}
 We have, for any $t\in [0,1]$ and $f\in L^2(\rho\, dx)$,
\begin{equation}\label{eq4.10-1}
||\Lambda(t, f)||_{L^2(\rho dx)}
\leq \bigl(\sup_{t\in [0,1]}||\partial_x^2\phi_t||_\infty\bigr)\, ||f||_{L^2(\rho dx)},
\end{equation}
\begin{equation}\label{eq4.10-2}
||\Theta(t, f)||_{L^2(\rho dx)}
\leq \Bigl(2\, \sup_{t\in [0,1]}||\partial_x^2\phi_t||_\infty^2\ + \sup_{t\in [0,1]}|| \partial_x (\partial_x^2\phi_t\, \partial_x\phi_t)||_\infty
\Bigr)\, ||f||_{L^2(\rho dx)}.
\end{equation}
\end{lemma}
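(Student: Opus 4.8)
The plan is to derive both inequalities from the same two ingredients already used in the proof of Lemma~\ref{lemma3.4}: the change-of-variables identity $\int_\T h(X_t)\,\rho\,dx=\int_\T h\,\rho_t\,dx$ (valid since $(X_t)_\#(\rho\,dx)=\rho_t\,dx$), and the normalisation $\int_\T\hat\rho_t\,dx=1$. From these one reads off
\begin{equation*}
\int_\T \hat\rho_t(X_t)^2\,\rho\,dx=\int_\T\hat\rho_t^2\,\rho_t\,dx=\Bigl(\int_\T\tfrac{dx}{\rho_t}\Bigr)^{-1},\qquad
\int_\T a_t^2\,\rho\,dx=\int_\T\frac{(\partial_x^2\phi_t)^2}{\rho_t}\,dx\le ||\partial_x^2\phi_t||_\infty^2\int_\T\tfrac{dx}{\rho_t},
\end{equation*}
and likewise $\int_\T b_t^2\,\rho\,dx\le ||\partial_x(\partial_x^2\phi_t\,\partial_x\phi_t)||_\infty^2\int_\T\tfrac{dx}{\rho_t}$, with $a_t,b_t$ as in \eqref{eq4.9}.

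First I would treat $\Lambda$. By Cauchy--Schwarz in $L^2(\rho\,dx)$, $\bigl|\int_\T f\,a_t\,\rho\,dx\bigr|^2\le ||f||_{L^2(\rho\,dx)}^2\,||a_t||_{L^2(\rho\,dx)}^2$, hence
\begin{equation*}
||\Lambda(t,f)||_{L^2(\rho\,dx)}^2=\Bigl|\int_\T f\,a_t\,\rho\,dx\Bigr|^2\int_\T\hat\rho_t(X_t)^2\,\rho\,dx
\le ||f||_{L^2(\rho\,dx)}^2\,||\partial_x^2\phi_t||_\infty^2,
\end{equation*}
the weight $\int_\T\tfrac{dx}{\rho_t}$ cancelling between the two factors; this is exactly \eqref{eq4.10-1}, and it is the same computation as the one carried out in Lemma~\ref{lemma3.4}.

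Next I would split $\Theta(t,f)=\Theta_1+\Theta_2+\Theta_3$ into its three summands (in the order written in the definition of $\Theta$) and bound each one by the identical mechanism: pull $||\partial_x^2\phi_t||_\infty$ (resp.\ $||\partial_x(\partial_x^2\phi_t\,\partial_x\phi_t)||_\infty$) out of the factor evaluated at $X_t$; for $\Theta_3$ use $\bigl|\int_\T\partial_x^2\phi_t\,\hat\rho_t\,dx\bigr|\le ||\partial_x^2\phi_t||_\infty$ (again since $\int_\T\hat\rho_t\,dx=1$); apply Cauchy--Schwarz to the scalar prefactor $\int_\T f\,a_t\,\rho\,dx$ or $\int_\T f\,b_t\,\rho\,dx$; and invoke the displayed identities so that $\int_\T\tfrac{dx}{\rho_t}$ again cancels. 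Keeping track of the constants $\tfrac12,\tfrac12,\tfrac32$ gives $||\Theta_1||\le\tfrac12||\partial_x^2\phi_t||_\infty^2||f||$, $||\Theta_2||\le\tfrac12||\partial_x(\partial_x^2\phi_t\,\partial_x\phi_t)||_\infty||f||$ and $||\Theta_3||\le\tfrac32||\partial_x^2\phi_t||_\infty^2||f||$ (all norms $L^2(\rho\,dx)$); the triangle inequality, together with the harmless bound $\tfrac12\le1$ on the $\Theta_2$ term, then yields \eqref{eq4.10-2} after replacing $||\partial_x^2\phi_t||_\infty$ and $||\partial_x(\partial_x^2\phi_t\,\partial_x\phi_t)||_\infty$ by their suprema over $t\in[0,1]$ (finite by continuity in $t$ and compactness of $[0,1]$).

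The only point requiring care --- and the reason the estimate is worth isolating as a lemma --- is that each individual factor appearing in $\Lambda$ and $\Theta$ involves $\int_\T\tfrac{dx}{\rho_t}$, which is \emph{not} bounded uniformly in $t$ (it blows up where $\rho_t$ is small). The argument works precisely because this weight enters once in the numerator (through $||a_t||^2$ or $||b_t||^2$, i.e.\ through $\int_\T\tfrac{(\partial_x^2\phi_t)^2}{\rho_t}\,dx$) and once in the denominator (through $\int_\T\hat\rho_t(X_t)^2\rho\,dx=(\int_\T\tfrac{dx}{\rho_t})^{-1}$), so it cancels and leaves constants depending only on the family $\{\phi_i\}$. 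I expect no genuine obstacle beyond the bookkeeping of the three terms of $\Theta$ and their numerical constants.
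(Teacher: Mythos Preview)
Your proposal is correct and follows exactly the approach the paper intends: its proof of Lemma~\ref{lemma4.4} consists of the single sentence ``We proceed in the same way as in the proof of Lemma~\ref{lemma3.4},'' and your write-up is precisely that computation carried out in full, with the key cancellation of $\int_\T \frac{dx}{\rho_t}$ between $\|a_t\|_{L^2(\rho dx)}^2$ (or $\|b_t\|_{L^2(\rho dx)}^2$) and $\|\hat\rho_t(X_t)\|_{L^2(\rho dx)}^2$. Your term-by-term bookkeeping for $\Theta$ and the observation that $\tfrac12\le 1$ recovers the stated constant are both accurate.
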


\vskip 2mm
\begin{proof} We proceed in the same way as in the proof of Lemma \ref{lemma3.4}. 
\end{proof}

By standard Picard iteration or by SDE on Hilbert spaces, finally we get the following result.
\begin{theorem}\label{th4.1}
There is a unique solution $\{f_t; t\in [0,1]\}$ to Equation \eqref{eq4.10}. 
\end{theorem}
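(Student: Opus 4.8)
The plan is to recognize Equation \eqref{eq4.10} as an inhomogeneous linear stochastic differential equation on the Hilbert space $H=L^2(\rho\,dx)$ of the form
\begin{equation*}
d_tf_t=\Lambda(t,f_t)\,dB_t+\Theta(t,f_t)\,dt,\quad f_0\in H,
\end{equation*}
where, for each $t\in[0,1]$, $\Lambda(t,\cdot)$ and $\Theta(t,\cdot)$ are bounded linear operators on $H$. Indeed $\Lambda(t,f)=-\bigl(\int_\T f a_t\,\rho\,dx\bigr)\hat\rho_t(X_t)$ is linear in $f$ (it is the rank-one operator $f\mapsto -\langle f,a_t\rangle_H\,\hat\rho_t(X_t)$), and likewise $\Theta(t,\cdot)$ is a finite sum of rank-one operators; the coefficients $a_t$, $b_t$, $\hat\rho_t(X_t)$, $(\hat\rho_t\,\partial_x^2\phi_t)(X_t)$ and $\int_\T\partial_x^2\phi_t\,\hat\rho_t\,dx$ are, for almost every $\omega$, continuous (hence bounded) in $t$ because $\phi_t$ is smooth, $X_t$ is a continuous flow and $\rho_t$ stays bounded away from $0$ and $\infty$ on the compact time interval. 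First I would record these measurability/continuity facts and the operator norm bounds, which are exactly the content of Lemma \ref{lemma4.4}: estimates \eqref{eq4.10-1} and \eqref{eq4.10-2} say precisely that the operator norms of $\Lambda(t,\cdot)$ and $\Theta(t,\cdot)$ are bounded uniformly in $t$ by constants depending only on $\sup_{t}\|\partial_x^2\phi_t\|_\infty$ and $\sup_t\|\partial_x(\partial_x^2\phi_t\,\partial_x\phi_t)\|_\infty$.

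Next I would run the standard Picard iteration in the Banach space $\mathcal S$ of $\F_t$-adapted continuous $H$-valued processes $(g_t)_{t\in[0,1]}$ with norm $\|g\|_{\mathcal S}^2=\E\sup_{t\in[0,1]}\|g_t\|_H^2$ (or, if one prefers to avoid the supremum, the simpler norm $\sup_t\E\|g_t\|_H^2$). Define the map $\mathcal K$ by
\begin{equation*}
(\mathcal K g)_t=f_0+\int_0^t\Lambda(s,g_s)\,dB_s+\int_0^t\Theta(s,g_s)\,ds .
\end{equation*}
Using the Burkholder--Davis--Gundy inequality for $H$-valued stochastic integrals together with the bounds \eqref{eq4.10-1}--\eqref{eq4.10-2}, one gets
\begin{equation*}
\E\sup_{u\le t}\|(\mathcal Kg)_u-(\mathcal Kh)_u\|_H^2\le C\int_0^t\E\sup_{u\le s}\|g_u-h_u\|_H^2\,ds
\end{equation*}
for a constant $C$ depending only on the two suprema above; iterating gives that some power $\mathcal K^n$ is a contraction, hence $\mathcal K$ has a unique fixed point $f\in\mathcal S$. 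This fixed point is by construction the unique solution to \eqref{eq4.10}. Uniqueness among all (not necessarily continuous) $L^2$-solutions follows from the same Gronwall estimate applied to the difference of two solutions.

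As an alternative to reproving the fixed-point argument by hand, I would simply invoke the classical existence and uniqueness theorem for SDEs on a Hilbert space with globally Lipschitz (here linear) coefficients: since $\Lambda(t,\cdot)$ and $\Theta(t,\cdot)$ are progressively measurable in $t$ and satisfy the global Lipschitz and linear-growth bounds of Lemma \ref{lemma4.4} uniformly on $[0,1]$, the general theory yields a unique strong $H$-valued solution. I do not anticipate a genuine analytic obstacle here; the only points requiring a little care are (i) checking that the random coefficients are jointly measurable and $\F_t$-adapted, and that the constants in Lemma \ref{lemma4.4} are indeed deterministic (which they are, being controlled by $\sup_t\|\partial_x^2\phi_t\|_\infty$ and $\sup_t\|\partial_x(\partial_x^2\phi_t\,\partial_x\phi_t)\|_\infty$ alone), and (ii) being careful with the It\^o versus Stratonovich bookkeeping — but that conversion has already been carried out in passing from \eqref{eq4.7} to \eqref{eq4.10} via the Kunita--It\^o--Wentzell formula, so Equation \eqref{eq4.10} is already in It\^o form and the existence theory applies directly. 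The mild ``surprise'' emphasized in the abstract — that this is an SDE on a Hilbert space rather than an SPDE — is precisely what makes this step routine once \eqref{eq4.10} has been derived.
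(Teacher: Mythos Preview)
Your proposal is correct and follows essentially the same approach as the paper: the paper's own proof is a single line invoking ``standard Picard iteration or \ldots\ SDE on Hilbert spaces,'' relying on the Lipschitz bounds of Lemma~\ref{lemma4.4}, and you have simply spelled out in detail exactly that argument. Your additional remarks on adaptedness, measurability, and the It\^o/Stratonovich bookkeeping are reasonable points of care but do not depart from the paper's route.
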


Define $\dis g_t=f_t(X_t^{-1})$. Contrary to ODE, we have no SDE directly expressing $X_t^{-1}$. 

\vskip 2mm

The following result will be used several times in the sequel.

\begin{lemma}\label{LEMMA}
Let $d\mu=\rho\, dx$ be a probability measure on a compact Riemannian manifold $M$ such that $\rho>0$
and $\Phi: M\ra M$ a diffeomorphism.  Set 
\begin{equation*}
\rho_\Phi =\frac{d\Phi_\#(\rho dx)}{dx}, \quad \tilde K=\frac{d (\Phi^{-1})_\#(dx)}{dx},
\end{equation*}
then   $\quad\dis \rho_\Phi(\Phi)\tilde K=\rho$.
\end{lemma}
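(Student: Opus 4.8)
The plan is to reduce the identity to the elementary change-of-variables rule for pushforwards of a measure by a diffeomorphism, applied to $\Phi^{-1}$. The starting point is the tautology $\rho\,dx=(\Phi^{-1})_\#\bigl(\Phi_\#(\rho\,dx)\bigr)$, which follows from $\Phi^{-1}\circ\Phi=\mathrm{id}$; combined with the definition of $\rho_\Phi$ this gives $\rho\,dx=(\Phi^{-1})_\#(\rho_\Phi\,dx)$. It then suffices to compute the right-hand side explicitly.

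First I would record the general fact that for any diffeomorphism $\Psi\colon M\to M$ and any nonnegative Borel function $h$ on $M$ one has $\Psi_\#(h\,dx)=(h\circ\Psi^{-1})\,\Psi_\#(dx)$. This is immediate by testing against $f\in C(M)$: both sides evaluated at $f$ equal $\int_M f(\Psi(x))\,h(x)\,dx$ after the obvious manipulation $h(\Psi^{-1}(\Psi(x)))=h(x)$. (One could equivalently phrase this through the Jacobian $J_\Psi$ of $\Psi$ relative to $dx$, but the test-function argument avoids introducing it.) Since $M$ is compact and $\Phi$ is a diffeomorphism, both $\Phi_\#(\rho\,dx)$ and $(\Phi^{-1})_\#(dx)$ are absolutely continuous with respect to $dx$, so $\rho_\Phi$ and $\tilde K$ are genuine (strictly positive) densities and these manipulations are legitimate.

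Next I would apply the general fact with $\Psi=\Phi^{-1}$ (so $\Psi^{-1}=\Phi$) and $h=\rho_\Phi$, obtaining
\[
(\Phi^{-1})_\#(\rho_\Phi\,dx)=(\rho_\Phi\circ\Phi)\,(\Phi^{-1})_\#(dx)=\rho_\Phi(\Phi)\,\tilde K\,dx,
\]
where the last equality uses the definition $\tilde K=\dfrac{d(\Phi^{-1})_\#(dx)}{dx}$. Combining this with the first paragraph yields $\rho\,dx=\rho_\Phi(\Phi)\,\tilde K\,dx$, and since $dx$ charges every nonempty open set, uniqueness of Radon--Nikodym densities forces $\rho=\rho_\Phi(\Phi)\,\tilde K$ as claimed (the identity holding $dx$-a.e., hence everywhere under the smoothness/positivity assumptions in play).

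There is no serious obstacle here: the only point requiring care is the bookkeeping of ``composition with $\Psi$'' versus ``composition with $\Psi^{-1}$'' in the pushforward formula, together with checking that $\rho_\Phi$ and $\tilde K$ are bona fide densities — which is guaranteed because $\Phi$ is a diffeomorphism of the compact manifold $M$.
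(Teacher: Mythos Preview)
Your proof is correct and follows essentially the same approach as the paper's. The paper compresses the argument into a single chain of equalities tested against $f\in C(M)$, namely $\int_M f\rho\,dx=\int_M f(\Phi^{-1}(\Phi))\rho\,dx=\int_M f(\Phi^{-1})\rho_\Phi\,dx=\int_M f\,\rho_\Phi(\Phi)\tilde K\,dx$, which is exactly your two pushforward steps written out in test-function form; your isolation of the identity $\Psi_\#(h\,dx)=(h\circ\Psi^{-1})\,\Psi_\#(dx)$ just makes explicit what the paper uses implicitly in the last equality.
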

\begin{proof}
Let $f\in C(M)$, we have 
\begin{equation*}
\int_M f \rho dx =\int_M f(\Phi^{-1}(\Phi))\, \rho dx=\int_M f(\Phi^{-1})\rho_\Phi\, dx
= \int_M f\ \rho_\Phi(\Phi)\, \tilde K\, dx,
\end{equation*}
the result follows. 
\end{proof}

\begin{proposition}\label{prop4.4} Suppose that $\dis \int_\T g_0(x) dx=0$, then 
for any $t\in [0,1]$, $\dis \int_\T g_t(x)\, dx=0$. 
\end{proposition}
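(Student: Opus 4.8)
The plan is to transfer the quantity $\int_\T g_t\,dx$ to the Lagrangian picture and to exploit the two stochastic differential equations already at our disposal, thereby avoiding any need for an SDE satisfied by $X_t^{-1}$ or by $g_t$ itself. Since $g_t=f_t(X_t^{-1})$, the substitution $y=X_t(x)$ gives
\[
\int_\T g_t(x)\,dx=\int_\T f_t(x)\,\partial_x X_t(x)\,dx,
\]
where $\partial_x X_t>0$ is the spatial Jacobian of the flow $(X_t)$; hence it suffices to show that $t\mapsto\int_\T f_t\,\partial_x X_t\,dx$ is almost surely constant on $[0,1]$. Since $X_0$ is the identity, one has $g_0=f_0$ and $\partial_x X_0=1$, so this constant equals $\int_\T f_0\,dx=\int_\T g_0\,dx=0$, which gives the proposition.

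I first record the two SDEs. Differentiating the Stratonovich equation $dX_t=\partial_x\phi_t(X_t)\circ dB_t$ in the label variable yields the linear Stratonovich equation for the Jacobian,
\[
\circ d_t(\partial_x X_t)=(\partial_x^2\phi_t)(X_t)\,\partial_x X_t\circ dB_t .
\]
Secondly, recall that \eqref{eq4.10} was obtained from the driftless Stratonovich equation \eqref{eq4.1}--\eqref{eq4.2} by composing the stochastic parallel translation with the flow $X_t$ through the Kunita--It\^o--Wentzell formula, the drift $\Theta$ in \eqref{eq4.10} being precisely the resulting It\^o correction; consequently the Stratonovich form of \eqref{eq4.10} is simply $\circ d_t f_t=\Lambda(t,f_t)\circ dB_t$, with $\Lambda(t,f)=-\bigl(\int_\T f\,a_t\,\rho\,dx\bigr)\hat\rho_t(X_t)$.

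Applying the Stratonovich chain rule then gives
\[
\circ d_t\bigl(f_t\,\partial_x X_t\bigr)=\Bigl(\Lambda(t,f_t)\,\partial_x X_t+f_t\,(\partial_x^2\phi_t)(X_t)\,\partial_x X_t\Bigr)\circ dB_t .
\]
I integrate over $\T$ and use two elementary consequences of the substitution $y=X_t(x)$: first, $\int_\T h(X_t)\,\partial_x X_t\,dx=\int_\T h\,dx$, so that $\int_\T\hat\rho_t(X_t)\,\partial_x X_t\,dx=\int_\T\hat\rho_t\,dx=1$ by \eqref{eq3.10}; second, Lemma \ref{LEMMA} applied to $\Phi=X_t$ gives $\partial_x X_t=\rho/\rho_t(X_t)$, whence $f_t\,(\partial_x^2\phi_t)(X_t)\,\partial_x X_t=f_t\,a_t\,\rho$ by \eqref{eq4.9}. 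The bracket therefore integrates to
\[
-\Bigl(\int_\T f_t\,a_t\,\rho\,dx\Bigr)+\int_\T f_t\,a_t\,\rho\,dx=0,
\]
so $\int_\T f_t\,\partial_x X_t\,dx$ is constant, equal to $\int_\T g_0\,dx=0$, as wanted. The only step needing care is the identification of the Stratonovich form of \eqref{eq4.10} as driftless; alternatively one may argue entirely in It\^o form, using the It\^o SDE for $\partial_x X_t$ coming from $dX_t=\partial_x\phi_t(X_t)\,dB_t+\frac12(\partial_x^2\phi_t)(X_t)(\partial_x\phi_t)(X_t)\,dt$ and the It\^o product rule: the joint quadratic variation of $f_t$ and $\partial_x X_t$ then cancels, after integration over $\T$, against the drift terms of both (the $b_t$-part of $\Theta$ and the drift of $\partial_x X_t$), once one uses $\partial_x(\partial_x^2\phi_t\,\partial_x\phi_t)=\partial_x^3\phi_t\,\partial_x\phi_t+(\partial_x^2\phi_t)^2$. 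In either approach this cancellation is the heart of the matter.
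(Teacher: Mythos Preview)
Your proof is correct and follows essentially the same route as the paper's. The only cosmetic difference is notation: the paper works with the density $\tilde K_t=\dfrac{d(X_t^{-1})_\#(dx)}{dx}$ and invokes Kunita's explicit formula $\tilde K_t=\exp\bigl(\int_0^t(\partial_x^2\phi_s)(X_s)\circ dB_s\bigr)$, whereas you work with the Jacobian $\partial_x X_t$ obtained by differentiating the flow SDE; but $\tilde K_t=\partial_x X_t$, so the Stratonovich product-rule computation, the two key identities $\int_\T\hat\rho_t(X_t)\tilde K_t\,dx=1$ and $\rho_t(X_t)\tilde K_t=\rho$, and the resulting cancellation are identical in both arguments.
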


\vskip 2mm
\begin{proof} Let $\dis \tilde K_t=\frac{d(X_t^{-1})_\#(dx)}{dx}$; then by Kunita \cite{Kunita} (see also \cite{FangLuoTh}), 
we have the following explicit formula: 
\begin{equation*}
\tilde K_t=\exp\Bigl(\int_0^t (\partial_x^2\phi_s)(X_s)\circ dB_s\Bigr).
\end{equation*}

Using $\tilde K_t$, $\dis \int_\T g_t(x)\, dx=\int_\T f_t\, \tilde K_t\, dx$. Remark that all of drift terms in $f_t$ came 
from It\^o's stochastic contraction; therefore in Stratanovich form 
\begin{equation}\label{eq4.11}
\circ d_tf_t = -\Bigl(\int_\T f_t a_t\, \rho dx\Bigr)\hat\rho_t(X_t)\circ dB_t.
\end{equation}
Now by It\^o\textcolor{black}{'s } formula, 
\begin{equation}\label{eq4.12}
\circ d_t(f_t\tilde K_t)=- \Bigl(\int_\T f_t a_t\, \rho dx\Bigr)\hat\rho_t(X_t) \tilde K_t\circ dB_t
+f_t\, \partial_x^2\phi_t(X_t)\, \tilde K_t\circ dB_t.
\end{equation}
Note that $\dis \int_\T \hat\rho_t(X_t)\tilde K_t\, dx=\int_\T \hat\rho_t(x)\, dx=1$. On the other hand, 
\begin{equation*}
\int_\T f_t\, \partial_x^2\phi_t(X_t)\, \tilde K_t\, dx
=\int_\T f_t\times \frac{\partial_x^2\phi_t}{\rho_t}(X_t)\, \rho_t(X_t)\,\tilde K_t\, dx
=\int_\T f_t a_t\, \rho\, dx,
\end{equation*}
the last equality being due to  $\dis \rho_t(X_t)\,\tilde K_t=\rho$ by Lemma \ref{LEMMA}.  Now 
we get $\circ d_t\int_\T f_t\tilde K_t dx=0$ using  \eqref{eq4.12}.
Therefore $\dis \int_\T g_t dx=\int_\T g_0 dx=0$.
\end{proof}

\begin{theorem}\label{th4.5} We have, 
for any $t\in [0,1]$, 
\begin{equation}\label{eq4.13}
\int_\T |g_t(x)|^2\rho_t(x)\, dx=\int_\T |g_0(x)|^2\, \rho dx.
\end{equation}
\end{theorem}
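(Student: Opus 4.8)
The plan is to reduce the norm identity \eqref{eq4.13} for $g_t$ to a corresponding identity for $f_t$ on the \emph{fixed} Hilbert space $L^2(\rho\,dx)$, exactly as was done in the deterministic case (Proposition \ref{prop3.5}), and then to verify that identity directly from the SDE \eqref{eq4.10}. The first step is the change-of-variables computation: since $g_t=f_t(X_t^{-1})$, Lemma \ref{LEMMA} applied to $\Phi=X_t$ gives $\rho_t(X_t)\,\tilde K_t=\rho$, whence
\begin{equation*}
\int_\T |g_t(x)|^2\,\rho_t(x)\,dx=\int_\T |f_t(x)|^2\,\frac{\rho_t}{\rho}(X_t)\,\tilde\rho_t\,dx=\int_\T |f_t(x)|^2\,\rho\,dx,
\end{equation*}
where $\tilde\rho_t=\frac{d(X_t^{-1})_\#(\rho\,dx)}{dx}$ and one uses $\frac{\rho_t}{\rho}(X_t)\,\tilde\rho_t=\rho$ (the same identity \eqref{eq3.15} already invoked in Section \ref{sect3}). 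So it suffices to show $\int_\T |f_t|^2\,\rho\,dx$ is constant in $t$.

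The second step is an It\^o expansion of $t\mapsto \int_\T |f_t|^2\,\rho\,dx$ using \eqref{eq4.10}. Writing $d_tf_t=\Lambda(t,f_t)\,dB_t+\Theta(t,f_t)\,dt$, It\^o's formula gives
\begin{equation*}
d_t\!\int_\T |f_t|^2\,\rho\,dx=2\!\int_\T f_t\,\Lambda(t,f_t)\,\rho\,dx\,dB_t
+\Bigl(2\!\int_\T f_t\,\Theta(t,f_t)\,\rho\,dx+\int_\T \Lambda(t,f_t)^2\,\rho\,dx\Bigr)dt.
\end{equation*}
For the martingale part: $\Lambda(t,f_t)=-\bigl(\int_\T f_t a_t\,\rho\,dx\bigr)\hat\rho_t(X_t)$, so $\int_\T f_t\,\Lambda(t,f_t)\,\rho\,dx=-\bigl(\int_\T f_t a_t\,\rho\,dx\bigr)\int_\T f_t\,\hat\rho_t(X_t)\,\rho\,dx$; the key observation (already used in the proof of Proposition \ref{prop4.4}) is that $\int_\T f_t\,\hat\rho_t(X_t)\,\rho\,dx=\int_\T g_t\,\hat\rho_t\,\rho_t\,dx=\bigl(\int_\T g_t\,dx\bigr)/\bigl(\int_\T \tfrac{dx}{\rho_t}\bigr)=0$ by Proposition \ref{prop4.4}. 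Hence the $dB_t$ term vanishes identically.

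The third and main step is to check that the bracketed $dt$ coefficient also vanishes. The Stratonovich-to-It\^o correction term $\int_\T \Lambda(t,f_t)^2\,\rho\,dx$ must cancel $2\int_\T f_t\,\Theta(t,f_t)\,\rho\,dx$. Here I would exploit that the drift $\Theta$ in \eqref{eq4.10} arose precisely as the It\^o correction in the Kunita--It\^o--Wentzell passage from \eqref{eq4.11} to \eqref{eq4.10}; equivalently, pass everything back to Stratonovich form. Using \eqref{eq4.11}, $\circ d_tf_t=\Lambda(t,f_t)\circ dB_t$, and the ordinary chain rule for Stratonovich differentials gives $\circ d_t\!\int_\T |f_t|^2\,\rho\,dx=2\int_\T f_t\,\Lambda(t,f_t)\,\rho\,dx\circ dB_t$, which is $0$ by the same orthogonality $\int_\T f_t\,\hat\rho_t(X_t)\,\rho\,dx=0$. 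Thus $\int_\T|f_t|^2\,\rho\,dx$ is constant, and combining with the first step yields \eqref{eq4.13}. The one genuine subtlety to handle carefully is the justification of the Stratonovich chain rule here: it requires that $t\mapsto f_t\in L^2(\rho\,dx)$ be an It\^o process with sufficiently regular coefficients, which follows from Theorem \ref{th4.1} together with the Lipschitz bounds of Lemma \ref{lemma4.4}; alternatively one avoids this by doing the It\^o computation of the second step and checking the $dt$-cancellation by direct substitution of the explicit forms of $\Lambda$ and $\Theta$, expanding $\int_\T\Lambda(t,f_t)^2\rho\,dx$ and matching it term-by-term against $2\int_\T f_t\Theta(t,f_t)\rho\,dx$ using the identities $\rho_t(X_t)\tilde K_t=\rho$, $\int_\T\hat\rho_t\,dx=1$, and the relations $a_t=\frac{\partial_x^2\phi_t}{\rho_t}(X_t)$, $b_t=\frac{\partial_x(\partial_x^2\phi_t\,\partial_x\phi_t)}{\rho_t}(X_t)$ from \eqref{eq4.9}.
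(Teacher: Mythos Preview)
Your proposal is correct and follows essentially the same route as the paper: reduce to constancy of $\int_\T f_t^2\,\rho\,dx$ via the change of variables $g_t=f_t(X_t^{-1})$, then use the Stratonovich form \eqref{eq4.11} together with the orthogonality $\int_\T f_t\,\hat\rho_t(X_t)\,\rho\,dx=0$ from Proposition~\ref{prop4.4} to conclude $\circ d_t\int_\T f_t^2\,\rho\,dx=0$. The paper's proof is exactly your Stratonovich argument (your ``third step''), stated more tersely and without the It\^o detour or the discussion of the chain-rule justification that you flag.
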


\vskip 2mm
\begin{proof} By \eqref{eq4.11}, 
\begin{equation*}
\begin{split}
\circ d_t \int_\T f_t^2\, \rho dx=&- 2\Bigl[ \Bigl( \int_\T f_ta_t\rho dx\Bigr) \hat\rho_t(X_t)\, f_t\, \rho dx\Bigr]\circ dB_t\\
&=- 2\Bigl[ \Bigl( \int_\T f_ta_t\rho dx\Bigr) \Bigl( \int_\T \hat\rho_t(X_t) f_t(x)\, dx\Bigr)\Bigr]\, \circ dB_t,
\end{split}
\end{equation*}
But we have seen that 
\begin{equation*}
 \int_\T \hat\rho_t(X_t) f_t(x)\, dx=\Bigl(\int_\T g_t(x)dx\Bigr)\Bigl(\int_\T \frac{dx}{\rho_t}\Bigr)^{-1}
\end{equation*}
which is equal to $0$ by Proposition \ref{prop4.4}. 
\end{proof}

Combining all above results, finally we get

\begin{theorem}\label{th4.6}
  Let $\partial_x\Psi_t=g_t$. Then  for $\mu=\rho\, dx$ and $\mu_t=(X_t)_\#(\rho dx)$, 
$\{\partial_x\Psi_t;\ t\in [0,1]\}$ is the parallel translation along the stochastic regular curve $\{\mu_t;\ t\in [0,1]\}$, that is,
$\dis \partial_x\Psi_t\in \TT_{\mu_t}$ and 
\begin{equation*}
 \int_{\T} |\partial_x\Psi_t|^2\, \mu_t(dx)=\int_{\T}|\partial_x\Psi_0|^2\, \rho dx,\quad t\in [0,1].
 \end{equation*}
\end{theorem}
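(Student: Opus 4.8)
The plan is to assemble the pieces built up in this section. Theorem \ref{th4.1} supplies a unique process $f_t\in L^2(\rho\,dx)$, $t\in[0,1]$, solving the linear It\^o equation \eqref{eq4.10}; set $g_t=f_t(X_t^{-1})$ and declare $\partial_x\Psi_t=g_t$. What has to be matched against the statement is then: (a) $\{\partial_x\Psi_t\}$ solves the stochastic parallel translation equation \eqref{eq4.7}; (b) $\partial_x\Psi_t\in\TT_{\mu_t}$ for every $t$; (c) the norm identity \eqref{eq4.13}; and uniqueness.

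Claims (b) and (c) are essentially already in hand. For (b): since $g_0\in\TT_{\rho dx}$ we have $\int_\T g_0\,dx=0$, so Proposition \ref{prop4.4} yields $\int_\T g_t(x)\,dx=0$ for every $t$; on $\T$ the tangent space \eqref{eq1.1} is exactly $\{v\in L^2(\mu_t):\int_\T v\,dx=0\}$ (the norms of $L^2(\mu_t)$ and $L^2(dx)$ being equivalent because $\rho_t>0$ is continuous on a compact space), hence $g_t\in\TT_{\mu_t}$. Claim (c) is precisely Theorem \ref{th4.5}, which itself rests on Proposition \ref{prop4.4}. So the real content of the theorem is (a).

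To prove (a) I would reverse the computation performed just before Theorem \ref{th4.1}, which turned \eqref{eq4.7} into \eqref{eq4.10} via the Kunita--It\^o--Wentzell formula applied to $f_t=\partial_x\Psi_t(X_t)$. As the text emphasizes, there is no SDE directly for $X_t^{-1}$, so rather than differentiate $g_t=f_t(X_t^{-1})$ head-on I would verify the tested form of \eqref{eq4.7}, i.e. the $\T$-version of Proposition \ref{prop4.2}: for $\psi\in C^\infty(\T)$, change variables along the flow,
\begin{equation*}
\int_\T \partial_x\psi\, g_t\,\rho_t\,dx=\int_\T (\partial_x\psi)(X_t)\, g_t(X_t)\,\rho\,dx=\int_\T (\partial_x\psi)(X_t)\, f_t\,\rho\,dx,
\end{equation*}
then apply It\^o's product rule using \eqref{eq4.6} for the semimartingale $x\mapsto(\partial_x\psi)(X_t)$ and \eqref{eq4.10} for $f_t$, and integrate against $\rho\,dx$. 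The martingale part, after undoing the change of variables, should become $\int_\T\langle\Pi_{\rho_t}(\partial_x^2\psi\,\partial_x\phi_t),\partial_x\Psi_t\rangle\,\mu_t(dx)\,dB_t$, the correct diffusion coefficient of \eqref{eq4.7}; the drift part, using the explicit formula \eqref{eq4.8} for $R_t^{\Psi_t}+S_t^{\Psi_t}$ together with the transport identities $\rho_t(X_t)\tilde K_t=\rho$ (Lemma \ref{LEMMA}) and $\int_\T\hat\rho_t(X_t)\tilde K_t\,dx=1$ already exploited above, should reassemble into $\tfrac12\int_\T\langle R_t^{\Psi_t}+S_t^{\Psi_t},\partial_x\Psi_t\rangle\,\mu_t(dx)\,dt$. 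Since $\psi$ is arbitrary and $\partial_x\Psi_t\in\TT_{\mu_t}$, this identifies $g_t$ as a solution of \eqref{eq4.7}. For uniqueness: $f\mapsto f(X_t^{-1})$ is a linear isomorphism of $L^2(\rho\,dx)$ onto $L^2(\mu_t)$ with inverse $g\mapsto g(X_t)$, so any solution of \eqref{eq4.7} with the prescribed initial value pulls back to a solution of \eqref{eq4.10}, which is unique by Theorem \ref{th4.1}.

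I expect the genuine obstacle to be the drift bookkeeping in step (a). One must keep the Stratonovich/It\^o conversions straight --- recall from \eqref{eq4.11} that \emph{every} drift term of \eqref{eq4.10} arises from an It\^o correction --- and then check that, after the substitution $f_t=\partial_x\Psi_t(X_t)$, the three drift terms of \eqref{eq4.10} recombine term by term with the grouped contributions $I_1+\cdots+I_4$ and $J_1+\cdots+J_4$ of \eqref{eq4.8}. No new analytic ingredient is required: compactness of $\T$, positivity of $\rho$ (hence of each $\rho_t$), the identity $\rho_t(X_t)\tilde K_t=\rho$, and the Lipschitz bounds of Lemma \ref{lemma4.4} are all in place; the work lies entirely in organizing the computation so that the cancellations are transparent.
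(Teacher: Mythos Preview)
Your handling of (b) and (c) is correct and is exactly what the paper does: the paper's entire proof is the one-line ``Combining all above results, finally we get'' --- i.e., Proposition~\ref{prop4.4} gives $\int_\T g_t\,dx=0$ (hence $g_t\in\TT_{\mu_t}$) and Theorem~\ref{th4.5} gives the norm identity, and that is all. You have misread the theorem in asserting that ``the real content of the theorem is (a)'': the clause ``that is'' in the statement makes explicit that, in this stochastic setting, the authors \emph{define} parallel translation by the two properties $\partial_x\Psi_t\in\TT_{\mu_t}$ and norm preservation; the paper never goes back to verify that $g_t$ solves \eqref{eq4.7}, and the theorem does not claim this.

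Your proposed verification of (a) --- testing against $\partial_x\psi$, pulling back along $X_t$, and reversing the Kunita--It\^o--Wentzell computation --- is a natural strengthening and the outline is sound, but it is extra work that lies outside the scope of Theorem~\ref{th4.6} as stated. If you want to include it, present it as a separate remark or proposition rather than as the proof of the theorem; for the theorem itself, your first two paragraphs (citing Proposition~\ref{prop4.4} and Theorem~\ref{th4.5}) already suffice and coincide with the paper's argument.
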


\vskip 3mm
Actually, it is well-known that some quasi-invariant non-degenerated diffusion processes have been constructed
in $\P_2(\T)$, see for example \cite{RS1, Wang}. It seems that these diffusion processes do not charge the subspace 
$\P_{2,\infty}(\T)$. In what follows, we construct a non-degenerated diffusion process $\{\mu_t; t\in [0,1]\}$ on 
$\P_{2,\infty}(\T)$ and parallel translations along it. 

\vskip 2mm

For $k\in \N^*$, set 
\begin{equation*}
\phi_{2k-1}(x)=\frac{\sin(kx)}{k},\quad \phi_{2k}(x)=-\frac{\cos(kx)}{k},\quad\hbox{\rm and }\quad a_k=k^q.
\end{equation*}

For an integer $N\geq 2$, we consider the SDE on $\T$, 
\begin{equation*}
dX_t^N=\sum_{k=1}^N \frac{1}{\alpha_k} \Bigl( \partial_x\phi_{2k-1}(X_t^N)\circ dB_{2k-1}(t)
+   \partial_x\phi_{2k}(X_t^N)\circ dB_{2k}(t)\Bigr).
\end{equation*}

Note that $\dis d_t \partial_x\phi_{2k-1}(X_t^N)\cdot dB_{2k-1}+  d_t \partial_x\phi_{2k}(X_t^N)\cdot dB_{2k}=0$; therefore 
the above Stratanovich SDE becomes the below It\^o SDE:

\begin{equation}\label{eq4.14}
dX_t^N=\sum_{k=1}^N \frac{1}{\alpha_k} \Bigl( \partial_x\phi_{2k-1}(X_t^N)\, dB_{2k-1}(t)
+   \partial_x\phi_{2k}(X_t^N)\, dB_{2k}(t)\Bigr).
\end{equation}

It is well-known (see \cite{Elworthy, IW, Kunita, Malliavin}, especially in \cite{AR, Fang}) that for $q>2$, almost surely,  as $N\ra +\infty$, 
$\dis X_t^N$ converges in $\dis C\bigl([0,1], \hbox{\rm Diff}(\T)\bigr)$ to $X_t$, which solves the following SDE: 

\begin{equation}\label{eq4.15}
dX_t=\sum_{k=1}^\infty \frac{1}{\alpha_k} \Bigl( \partial_x\phi_{2k-1}(X_t)\, dB_{2k-1}(t)
+   \partial_x\phi_{2k}(X_t)\, dB_{2k}(t)\Bigr).
\end{equation}

\vskip 2mm
Let $d\mu=\rho\, dx\in \P_{2,\infty}(\T)$ be given; for any $N>2$, we denote by $\rho_t^N$ 
the density of the measure $\dis (X_t^N)_\#(\mu)$ with respect to $dx$. It is obvious that almost surely, $\rho_t^N$ 
converges to the density $\rho_t$ of  $\dis (X_t)_\#(\mu)$ uniformly in $(t,x)\in [0,1]\times\T$. It is quite automatic that 
results in Theorem \ref{th4.6} remain valid for SDE \eqref{eq4.14}. More precisely, for any $\partial_x\Psi_0$ given in 
$\dis L^2(\rho\,dx)$, there exists the parallel translation $\{\partial_x\Psi_t^N;\ t\in [0,1]\}$ along $\{\mu_t^N;\ t\in [0,1]\}$, 
that is, $\partial_x\Psi_t\in \TT_{\mu_t^N}$ and 
\begin{equation*}
\int_\T |\partial_x\Psi_t^N|^2\, \rho_t^N\,dx=\int_\T |\partial_x\Psi_0|^2\, \rho\,dx.
\end{equation*}

\vskip 2mm
For simplicity, we again use the notation $\dis g_t^N=\partial_x\Psi_t^N$ and $\dis f_t^N=g_t^N(X_t^N)$. 
We introduce 
\begin{equation*}
a_k^N(t)=\Bigl( \frac{\partial_x^2\phi_k}{\rho_t^N}\Bigr)(X_t^N), \quad
b_k^N(t)=\Bigl( \frac{\partial_x\bigl(\partial_x^2\phi_k\,\partial_x\phi_k\bigr)}{\rho_t^N}\Bigr)(X_t^N),
\end{equation*}
and 
\begin{equation*}
\Lambda_k^N(t,f)=-\Bigl(\int_\T f\, a_k^N(t)\, \rho\,dx\Bigr)\, \hat\rho_t^N(X_t^N), 
\end{equation*}
\begin{equation*}
\begin{split}
\Theta_k^N(t,f)=-&\Bigl(\int_\T f\, a_k^N(t)\, \rho\,dx\Bigr)\, \bigl(\hat\rho_t^N\,\partial_x^2\phi_k\bigr)(X_t^N)
-\Bigl(\int_\T f\, b_k^N(t)\, \rho\,dx\Bigr)\, \hat\rho_t^N(X_t^N)\\
+& 3\ \Bigl(\int_\T f\, a_k^N(t)\, \rho\,dx\Bigr)\, \Bigl(\int_\T \partial_x^2\phi_k\, \hat\rho_t^N\, dx\Bigr)\hat\rho_t^N(X_t^N).
\end{split}
\end{equation*}

Then by \eqref{eq4.10}, $f_t^N$ satisfies the following SDE 
\begin{equation}\label{eq4.16}
d_tf_t^N=\sum_{k=1}^{2N} \frac{1}{\alpha_k}\, \Lambda_k^N(t, f_t^N)\, dB_t^k
+\sum_{k=1}^{2N} \frac{1}{2\alpha_k^2}\,\Theta_k^N(t,f_t^N)\, dt,
\end{equation}
with $\dis\alpha_k = [\frac{k+1}{2}]^q$. In the sequel, we will use the notation: 
$\dis \xi(s)=\sum_{k=1}^{+\infty} \frac{1}{k^s}$ for $s>1$.

\begin{theorem}\label{th4.8} Let $\dis q>\frac{5}{2}$; then
almost surely, as $N\ra +\infty$, $f_\cdot^N$ converges in $\dis C\bigl([0,1], L^2(\T, \rho\,dx)\bigr)$. More precisely,
there exists $f\in C\bigl([0,1], L^2(\T, \rho\,dx)\bigr)$ such that 
\begin{equation}\label{eq4.17}
\lim_{N\ra +\infty} \sup_{t\in [0,1]}\int_\T |f_t^N-f_t|^2\rho\, dx =0.
\end{equation}
\end{theorem}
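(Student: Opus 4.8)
The plan is to prove that $(f^N_{\cdot})_N$ is almost surely a Cauchy sequence in the Banach space $C([0,1],H)$, with $H:=L^2(\T,\rho\,dx)$, and then to take $f$ to be its limit. The starting point is the norm conservation $\|f^N_t\|_H=\|\partial_x\Psi_0\|_H$, valid for all $t\in[0,1]$ and all $N$: this is the analogue for the truncated SDE \eqref{eq4.14} of Theorem~\ref{th4.6}, transported to the $f$-variable through the change of variables $f^N_t=g^N_t(X^N_t)$ exactly as in \eqref{eq3.15}. This uniform bound will absorb every term that is linear in $f^N$.

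First I would record the size estimates for the coefficients appearing in \eqref{eq4.16}. From $\phi_{2k-1}(x)=\sin(kx)/k$ and $\phi_{2k}(x)=-\cos(kx)/k$ one gets, for $j\in\{2k-1,2k\}$, $\|\partial_x^2\phi_j\|_\infty\asymp k$, $\|\partial_x^3\phi_j\|_\infty\asymp k^2$ and $\|\partial_x(\partial_x^2\phi_j\,\partial_x\phi_j)\|_\infty\asymp k^2$, while $\alpha_k=[\frac{k+1}{2}]^q\asymp k^q$. Combined with the almost sure finiteness of $\sup_{t\in[0,1],\,N}\big(\|\rho^N_t\|_\infty+\|1/\rho^N_t\|_\infty+\|\partial_x\log\rho^N_t\|_\infty+\|(X^N_t)'\|_\infty\big)$ — which follows, after restricting to a high probability event, from the almost sure convergence $X^N\to X$ in $C([0,1],\mathrm{Diff}(\T))$ recalled before \eqref{eq4.15} — these give, for $f,g\in H$: the growth bounds $\|\Lambda^N_k(t,f)\|_H\lesssim k\|f\|_H$ and $\|\Theta^N_k(t,f)\|_H\lesssim k^2\|f\|_H$; the Lipschitz bounds $\|\Lambda^N_k(t,f)-\Lambda^N_k(t,g)\|_H\lesssim k\|f-g\|_H$ and $\|\Theta^N_k(t,f)-\Theta^N_k(t,g)\|_H\lesssim k^2\|f-g\|_H$; and the stability estimate in $N$
\begin{equation*}
\|\Lambda^M_k(t,f)-\Lambda^N_k(t,f)\|_H\lesssim k^2\,\delta_N\,\|f\|_H,\qquad \|\Theta^M_k(t,f)-\Theta^N_k(t,f)\|_H\lesssim k^3\,\delta_N\,\|f\|_H\qquad(M\ge N,\ k\le 2N),
\end{equation*}
where $\delta_N:=\sup_{M\ge N}\big(\|X^M-X^N\|_{C([0,1],\mathrm{Diff}(\T))}+\|\rho^M-\rho^N\|_{C([0,1]\times\T)}\big)$, which tends to $0$ almost surely. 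The extra power of $k$ in these last bounds comes from differentiating the oscillatory factor $\partial_x^2\phi_k$ in the mean value estimate used to pass from the flow $X^N_t$ to the flow $X^M_t$.

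The core step is a Gronwall estimate for $h^N_t:=f^M_t-f^N_t$, $M>N$. Subtracting the two instances of \eqref{eq4.16}, writing for $k\le 2N$ the splitting $\Lambda^M_k(t,f^M_t)-\Lambda^N_k(t,f^N_t)=[\Lambda^M_k(t,f^M_t)-\Lambda^M_k(t,f^N_t)]+[\Lambda^M_k(t,f^N_t)-\Lambda^N_k(t,f^N_t)]$ (and likewise for $\Theta$), and keeping the modes $2N<k\le 2M$ as a tail contribution, I apply the Burkholder--Davis--Gundy inequality and It\^o's isometry to the stochastic integrals, Cauchy--Schwarz to the $dt$-integrals, and the estimates above together with $\|f^M_t\|_H=\|\partial_x\Psi_0\|_H$, to obtain
\begin{equation*}
\E\sup_{s\le t}\|h^N_s\|_H^2\le C\Big(\xi(2q-4)\,\E[\delta_N^2]+\sum_{k>2N}\frac{k^2}{\alpha_k^2}\Big)\|\partial_x\Psi_0\|_H^2+C\,\xi(2q-2)\int_0^t\E\sup_{r\le s}\|h^N_r\|_H^2\,ds .
\end{equation*}
Here is where the hypothesis $q>\frac52$ enters decisively: it is exactly the condition making $\xi(2q-4)=\sum_k k^{4-2q}$ finite, so that the flow-discrepancy term is controlled by $\E[\delta_N^2]\to 0$ (while $\xi(2q-2)<\infty$ is automatic and the tail $\sum_{k>2N}k^{2-2q}\to0$). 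Gronwall's lemma then yields $\E\sup_{t\le1}\|f^M_t-f^N_t\|_H^2\le \eps_N$ with $\eps_N\to 0$ and independent of $M>N$. Since $D_N:=\sup_{M,M'\ge N}\sup_{t\le1}\|f^M_t-f^{M'}_t\|_H$ is nonincreasing in $N$ and obeys $\E[D_N^2]\le 4\eps_N\to 0$, monotone convergence forces $D_N\to0$ almost surely; hence $(f^N_{\cdot})$ is almost surely Cauchy in $C([0,1],H)$ and converges to some $f\in C([0,1],H)$, which is \eqref{eq4.17}.

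The main difficulty is the stability estimate of the second paragraph: comparing the coefficients $\Lambda^M_k$ and $\Lambda^N_k$ (resp.\ $\Theta^M_k$ and $\Theta^N_k$) at frequency $k$ costs one $x$-derivative more on $\phi_k$ than the plain Lipschitz bound does, and it is precisely this derivative loss — producing the factor $k^4$, hence the series $\xi(2q-4)$, in the martingale part of the Gronwall estimate — that pins down the threshold $q>\frac52$. A secondary, purely technical point is that several constants above depend on the a.s.\ finite but unbounded quantity $\sup_{t,N}\big(\|\rho^N_t\|_\infty+\|1/\rho^N_t\|_\infty+\cdots\big)$; one disposes of this by first proving the estimates on the events where this quantity is bounded by $R$ and then letting $R\to\infty$.
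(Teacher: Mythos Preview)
Your overall architecture --- BDG on the martingale part, splitting $\Lambda_k^{M}(t,f_t^{M})-\Lambda_k^{N}(t,f_t^{N})$ into a Lipschitz piece and a coefficient-stability piece, tail estimate for $k>2N$, then Gronwall --- matches the paper's. Two of your choices are genuine simplifications: (i) you invoke the norm conservation $\|f_t^N\|_H=\|\partial_x\Psi_0\|_H$, which is indeed available and replaces the paper's higher moment bound $D_p=\sup_N\sup_t\E\|f_t^N\|_H^p<\infty$; (ii) your passage from $\E[D_N^2]\to0$ to $D_N\to0$ a.s.\ via monotonicity is slicker than the paper's route, which extracts an explicit rate $\E\sup_t\|f_t^N-f_t\|_H^2\le CN^{-(q-\frac12)}$ from the quantitative estimates of Propositions~\ref{prop5.1} and~\ref{prop5.2} and then invokes Borel--Cantelli.

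There is, however, a genuine gap in how you handle the random constants. Your stability bound $\|\Lambda_k^{M}(t,f)-\Lambda_k^{N}(t,f)\|_H\lesssim k^2\delta_N\|f\|_H$ hides a factor depending on $\sup_{t,N}\bigl(\|1/\rho_t^N\|_\infty+\cdots\bigr)$. After BDG this random factor sits inside the expectation together with $\delta_N^2$; you need moment control on the product, not merely a.s.\ finiteness, and your displayed Gronwall inequality with a deterministic $C$ and $\E[\delta_N^2]$ is not what one actually gets. The proposed fix --- restrict to $A_R$, then let $R\to\infty$ --- does not combine with BDG as stated, because $\mathbf{1}_{A_R}$ is not adapted (it depends on the whole path and on all $N$). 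You can rescue this by localizing with the stopping time $\sigma_R=\inf\{t:\sup_N(\|\rho_t^N\|_\infty+\|1/\rho_t^N\|_\infty+\cdots)>R\}$, running BDG and Gronwall on $[0,\sigma_R]$ with deterministic constant $C(R)$, and then using that a.s.\ $\sigma_R=1$ for $R$ large. The paper sidesteps the issue altogether: rather than packaging the stability term as $C(\omega)\,k^2\delta_N$, it estimates directly $\int_\T|a_k^{M}(s)-a_k^{N}(s)|^2\rho\,dx\le 2k^4V_1(s)+2k^2V_2(s)$ with $V_1,V_2$ independent of $k$, rewrites the densities via Lemma~\ref{LEMMA} as $1/\rho_s^{N}(X_s^{N})=\tilde K_s^{N}/\rho$, and controls $\E[V_j(s)]$ by H\"older together with the moment bounds of the Appendix. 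This produces a Gronwall inequality with genuinely deterministic constants and an explicit rate, at the price of the lengthy Proposition~\ref{prop4.9}. Your route is cleaner in spirit but, as written, either needs the stopping-time localization or implicitly relies on the same Appendix estimates to justify that $\E[C(\omega)^2\delta_N^2]\to0$.
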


\begin{proof} We first remark that 
\begin{equation*}
||\partial_x^2\phi_k||_\infty \leq k,\quad ||\partial_x(\partial_x^2\phi_k\, \partial_x\phi_k)||_\infty\leq k^2.
\end{equation*}
Then, by Lemma \ref{lemma4.4}, for any $N\geq 2$, 
\begin{equation}\label{eq4.18}
||\Lambda_k^N(t,f)||_{L^2(\rho dx)}\leq k\, ||f||_{L^2(\rho dx)},\quad
||\Theta_k^N(t,f)||_{L^2(\rho dx)}\leq 3k^2\, ||f||_{L^2(\rho dx)}.
\end{equation}

Using \eqref{eq4.18}, we have, 
\begin{equation*}
\sum_{k=1}^{+\infty} \Bigl\| \frac{1}{\alpha_k}\, \Lambda_k^N(t,f)\Bigr\|^2_{L^2(\rho dx)}
\leq \xi(2q-2)\, ||f||_{L^2(\rho dx)},
\end{equation*}
as well as
\begin{equation*}
\sum_{k=1}^{+\infty} \Bigl\| \frac{1}{\alpha_k^2}\Theta_k^N(t,f)\Bigr\|_{L^2(\rho dx)}
\leq 3\xi(2q-2)\, ||f||_{L^2(\rho dx)}.
\end{equation*}

Combining these estimates, together with SDE \eqref{eq4.16}, we get that, for any $p\geq 1$, 
\begin{equation}\label{eq4.21}
D_p=\sup_N\sup_{t\in [0,1]} \E\Bigl(||f_t^N||_{L^2(\rho dx)}^p\Bigr) <+\infty.
\end{equation}

Now let $N'>N\geq 2$, we have
\begin{equation*}
\begin{split}
d_t\bigl(f_t^{N'}-f_t^N\bigr)=& \sum_{k=1}^{2N} \frac{1}{\alpha_k}\Bigl( \Lambda_k^{N'}(t,f_t^{N'})- \Lambda_k^{N}(t,f_t^{N})\Bigr)\, dB_t^k
\\&+ \sum_{k=1}^{2N} \frac{1}{2\alpha_k^2} \Bigl( \Theta_k^{N'}(t,f_t^{N'})- \Theta_k^{N}(t,f_t^{N})\Bigr)\ dt\\
&\hskip -10mm + \sum_{k=2N+1}^{2N'} \frac{1}{\alpha_k} \Lambda_k^{N'}(t,f_t^{N'})\, dB_t^k
 +  \sum_{k=2N+1}^{2N'} \frac{1}{2\alpha_k^2} \Theta_k^{N'}(t,f_t^{N'})\, dt\\
 &=dI_1(t)+dI_2(t)+dI_3(t)+dI_4(t)\quad\hbox{\rm respectively}.
\end{split}
\end{equation*}
By \textcolor{black}{the} B\"urkh\"older inequality, 
\begin{equation*}
\E\Bigl(\sup_{s\in [0,t]}||I_3(s)||_{L^2(\rho dx)}^2\Bigr)
\leq 4\ \sum_{k>2N} \E\Bigl( \int_0^t \frac{1}{\alpha_k^2}||\Lambda_k^{N'}(s, f_s^{N'})||_{L^2(\rho dx)}^2\, ds\Bigr),
\end{equation*}
which is dominated, using \eqref{eq4.18}, 
\begin{equation*}
4 \sum_{k>2N} \frac{k^2}{k^{2q}}\, \E\Bigl(\int_0^t ||f_s^{N'}||_{L^2(\rho dx)}^2\, ds\Bigr)
\leq \frac{4}{(2N)^{2q-3}}\, \E\Bigl(\int_0^1 ||f_s^{N'}||_{L^2(\rho dx)}^2\Bigr).
\end{equation*}

According to \eqref{eq4.21}, we get
\begin{equation}\label{eq4.22}
\E\Bigl(\sup_{s\in [0,t]}||I_3(s)||_{L^2(\rho dx)}^2\Bigr)
\leq \frac{4D_1}{(2N)^{2q-3}}.
\end{equation}
Secondly,  according to second estimate in \eqref{eq4.18},  we have
\begin{equation*}
\begin{split}
&||I_4(t)||_{L^2(\rho dx)}\leq \sum_{k=2N+1}^{2N'} \frac{1}{2\alpha_k^2}\, \int_0^t ||\Theta_k^{N'}(s, f_s^{N'})||_{L^2(\rho dx)}\,ds\\
& \leq  \sum_{k=2N+1}^{2N'} \frac{3k^2}{2 k^{2q}}\,\int_0^t ||f_s^{N'}||_{L^2(\rho dx)}\, ds
\leq \frac{6}{(2N)^{2q-3}}\,\int_0^t ||f_s^{N'}||_{L^2(\rho dx)}\, ds.
\end{split}
\end{equation*}

It follows that, for $N$ big enough, 
\begin{equation}\label{eq4.23}
\E\Bigl( \sup_{s\in [0,t]} ||I_4(s)||_{L^2(\rho dx)}^2\Bigr) 
\leq \Big(  \frac{6}{(2N)^{2q-3}}\Bigr)^2\, D_1\leq  \frac{4D_1}{(2N)^{2q-3}}. 
\end{equation}

Estimating  $I_1(t)$ and $I_2(t)$ will make appear Gronwall type inequality for 
\begin{equation*}
 \E\Bigl( \sup_{s\in [0,t]}||f_s^{N'}-f_s^N||_{L^2(\rho dx)}^2\Bigr),
 \end{equation*} 
which will yield desired result. 

\vskip 2mm
Again by B\"urkh\"older inequality, we have 
\begin{equation}\label{eq4.24}
\E\Bigl( \sup_{s\in [0,t]}||I_1(s)||_{L^2(\rho dx)}^2\Bigr)\leq
\sum_{k=1}^{2N} \frac{4}{\alpha_k^2} \E\Bigl(\int_0^t ||\Lambda_k^{N'}(s, f_s^{N'})-\Lambda_k^{N}(s, f_s^{N})||_{L^2(\rho dx)}^2\, ds\Bigr).
\end{equation}

By expression of $\Lambda_k$, we write down
\begin{equation}\label{eq4.24.1}
\begin{split}
\Lambda_k^{N'}(s, f_s^{N'})-\Lambda_k^{N}(s, f_s^{N})
=& \Bigl[ -\int_\T f_s^{N'}a_k^{N'}(s)\, \rho dx+ \int_\T f_s^{N}a_k^{N}(s)\, \rho dx\Bigr]\, \hat\rho_s^{N'}(X_s^{N'})\\
&+\Bigl( \int_\T f_s^{N}a_k^{N}(s)\, \rho dx\Bigr)\, \Bigl( \hat\rho_s^{N}(X_s^{N}) - \hat\rho_s^{N'}(X_s^{N'})\Bigr)\\
&=J_1(s)+J_2(s)\quad\hbox{respectively.}
\end{split}
\end{equation}

As for getting estimates in \eqref{eq4.18}, we have 
\begin{equation}\label{eq4.25}
\Bigl\|\Bigl( \int_\T (f_s^N-f_s^{N'})a_k^{N'}(s)\, \rho dx\Bigr)\, \hat\rho_s^{N'}(X_s^{N'})\Bigr\|_{L^2(\rho dx)}
\leq k\, ||f_s^N-f_s^{N'}||_{L^2(\rho dx)}.
\end{equation}
Hence $\dis ||\Lambda_k^{N'}(s, f_s^{N'})-\Lambda_k^{N}(s, f_s^{N})||_{L^2(\rho dx)}^2$ is dominated by 
the sum of two terms in the following way

\begin{equation*}
2k^2\, ||f_s^N-f_s^{N'}||_{L^2(\rho dx)}^2 + o_k(s, N, N'),
\end{equation*}

so that the right hand side of \eqref{eq4.24} has the following upper bound
\begin{equation*}
\sum_{k=1}^{2N} \frac{8k^2}{k^{2q}}\,  \int_0^t \E\Bigl( ||f_s^N-f_s^{N'}||^2_{L^2(\rho dx)}\Bigr)\, ds
+\sum_{k=1}^{2N} \frac{4}{k^{2q}}\, \int_0^t \E(o_k(s,N,N'))\, ds.
\end{equation*} 
Therefore the right hand side of \eqref{eq4.24} is dominated by 
\begin{equation*}
8\xi(2q-2)\,  \int_0^t \E\Bigl( ||f_s^N-f_s^{N'}||^2_{L^2(\rho dx)}\Bigr)\, ds
+o(N).
\end{equation*} 
We have in fact the following inequality
\begin{equation}\label{eq4.26}
\E\Bigl( \sup_{s\in [0,t]}||I_1(s)||_{L^2(\rho dx)}^2\Bigr)\leq
8\xi(2q-2)\,  \int_0^t \E\Bigl( ||f_s^N-f_s^{N'}||^2_{L^2(\rho dx)}\Bigr)\, ds
+\frac{C}{N^{q-\frac{1}{2}}}.
\end{equation} 

Since the detail of the proof of \eqref{eq4.26} is lengthy, we will do it in the following proposition. In the same way, there is a constant \textcolor{black}{$C_2>0$} such that 

\begin{equation}\label{eq4.27}
\E\Bigl( \sup_{s\in [0,t]}||I_2(s)||_{L^2(\rho dx)}^2\Bigr) 
\leq \textcolor{black}{C_2\xi(2q-2)}\,  \int_0^t \E\Bigl( ||f_s^N-f_s^{N'}||^2_{L^2(\rho dx)}\Bigr)\, ds
+ \frac{C}{N^{q-\frac{1}{2}}}.
\end{equation}
Also the proof of \eqref{eq4.27} is postponed in the following proposition.  
For $\dis q>\frac{5}{2}$, $2q-3>q-\frac{1}{2}$, \textcolor{black}{combining}
\eqref{eq4.22}, \eqref{eq4.23}, \eqref{eq4.26} and \eqref{eq4.27}, we finally get 
\begin{equation*}
\E\Bigl( \sup_{s\in [0,t]}||f_s^N-f_s^{N'}||_{L^2(\rho dx)}^2\Bigr)
\leq C_1\, \int_0^t \E\Bigl( ||f_s^N-f_s^{N'}||_{L^2(\rho dx)}^2\Bigr)\, ds + C_2 N^{-(q-\frac{1}{2})}.
\end{equation*}
By Gronwall\textcolor{black}{'s} lemma, there is a constant $C>0$ such that, for any $N'\geq N$, 

\begin{equation*}\label{eq4.28}
\E\Bigl( \sup_{s\in [0,t]}||f_s^N-f_s^{N'}||_{L^2(\rho dx)}^2\Bigr)
\leq  C\, N^{-(q-\frac{1}{2})} \textcolor{black}{e^{C_1t}}.
\end{equation*}

It follows that $\{f^N; N\geq 2\}$ is a Cauchy sequence in $\dis L^2\Bigl(\Omega, C\bigl([0,1], L^2(\rho dx)\bigr)\Bigr)$;  then 
there exists $f\in  L^2\Bigl(\Omega, C\bigl([0,1], L^2(\rho dx)\bigr)\Bigr)$ such that 
\begin{equation}\label{eq4.28}
\E\Bigl( \sup_{s\in [0,1]}||f_s^N-f_s||_{L^2(\rho dx)}^2\Bigr)
\leq  C\, N^{-(q-\frac{1}{2})}.
\end{equation}
 This estimate is useful in order to obtain the almost surely convergence. Let $\beta \in (0, 1/2)$, we put 
 \begin{equation*}
 \Omega_N=\Bigl\{\omega;\ \sup_{t\in [0,1]}||f_t^N-f_t||_{L^2(\rho dx)}\geq N^{-\beta}\Bigr\}.
 \end{equation*}
By \eqref{eq4.28}, $\dis {\mathbf P}(\Omega_N)\leq C\, N^{-(q-(1/2)-2\beta)}$. For $\dis q>\frac{5}{2}$ 
and $\beta\in (0,1/2)$, the series $\dis \sum_{N>1} N^{-(q-(1/2)-2\beta)}$ converges;
 therefore the Borel-Cantelli lemma yields the almost sure convergence 
 with a convergence rate $N^{-\beta}$. 
\end{proof}

\begin{proposition}\label{prop4.9} Above inequality \eqref{eq4.26} as well as inequality \eqref{eq4.27} hold true.
\end{proposition}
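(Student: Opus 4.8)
The plan is to treat the martingale term $I_1(t)$ and the finite--variation term $I_2(t)$ in parallel, in each case decomposing $\Lambda_k^{N'}(s,f_s^{N'})-\Lambda_k^{N}(s,f_s^{N})$ (respectively $\Theta_k^{N'}(s,f_s^{N'})-\Theta_k^{N}(s,f_s^{N})$) into a \emph{diagonal} part, in which only the processes $f^{N'}$ and $f^{N}$ differ, and an \emph{off-diagonal} part, in which the coefficients $a_k^{N'},b_k^{N'},\hat\rho_s^{N'}(X_s^{N'}),\dots$ built from the truncated flow differ from their level-$N$ counterparts. Summed over $k$, the diagonal part will reproduce the Gronwall term $\int_0^t\E\|f_s^{N'}-f_s^N\|_{L^2(\rho\,dx)}^2\,ds$ with the stated constants, while the off-diagonal part will be shown to be $O(N^{-(2q-3)})$; since $q>\tfrac52$ one has $2q-3>q-\tfrac12$, so this is $\le C\,N^{-(q-\frac12)}$, which is what \eqref{eq4.26}--\eqref{eq4.27} demand.

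First I would prove \eqref{eq4.26}. Starting from the B\"urkh\"older (Doob) inequality \eqref{eq4.24}, one must bound $\sum_{k=1}^{2N}\alpha_k^{-2}\int_0^t\E\|\Lambda_k^{N'}(s,f_s^{N'})-\Lambda_k^{N}(s,f_s^{N})\|_{L^2(\rho\,dx)}^2\,ds$. Using the decomposition \eqref{eq4.24.1} into $J_1(s)+J_2(s)$ and splitting $J_1$ once more as
\[
J_1(s)=-\Bigl(\int_\T (f_s^{N'}-f_s^N)\,a_k^{N'}(s)\,\rho\,dx\Bigr)\hat\rho_s^{N'}(X_s^{N'})-\Bigl(\int_\T f_s^{N}\bigl(a_k^{N'}(s)-a_k^{N}(s)\bigr)\,\rho\,dx\Bigr)\hat\rho_s^{N'}(X_s^{N'}),
\]
the first summand is estimated exactly as in \eqref{eq4.25} by $k\,\|f_s^{N'}-f_s^N\|_{L^2(\rho\,dx)}$; inserting the weights $\alpha_k^{-2}\le C\,k^{-2q}$ and summing over $k$, this produces --- with the B\"urkh\"older constant $4$ and the elementary bound $\|a+b\|^2\le 2\|a\|^2+2\|b\|^2$ --- precisely $8\,\xi(2q-2)\int_0^t\E\|f_s^{N'}-f_s^N\|_{L^2(\rho\,dx)}^2\,ds$. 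There remain two off-diagonal pieces, one containing $a_k^{N'}(s)-a_k^{N}(s)$ and one (namely $J_2$) containing $\hat\rho_s^{N'}(X_s^{N'})-\hat\rho_s^{N}(X_s^{N})$; by Cauchy--Schwarz in $x$ these are bounded in $L^2(\rho\,dx)$ by $\|f_s^N\|_{L^2(\rho\,dx)}$ times, respectively, $\|a_k^{N'}(s)-a_k^{N}(s)\|_\infty$ and $k\,\|\hat\rho_s^{N'}(X_s^{N'})-\hat\rho_s^{N}(X_s^{N})\|_\infty$, up to fixed constants.

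The crux is therefore a quantitative rate for the convergence of the truncated flows of \eqref{eq4.15}. By Lemma \ref{LEMMA} one has $\rho_t^N(X_t^N)=\rho/J_t^N$, where $J_t^N$ is the Jacobian of $X_t^N$, given explicitly (as in Proposition \ref{prop4.4}) by $J_t^N=\exp\bigl(\sum_{k\le 2N}\alpha_k^{-1}\int_0^t\partial_x^2\phi_k(X_s^N)\circ dB_s^k\bigr)$; hence $a_k^N(t)=\partial_x^2\phi_k(X_t^N)\,J_t^N/\rho$ and $\hat\rho_t^N(X_t^N)$ has an analogous expression in $J_t^N$, so, using $\|\partial_x^2\phi_k\|_\infty\le k$ and $\|\partial_x^3\phi_k\|_\infty\le k^2$, one gets $\|a_k^{N'}(s)-a_k^{N}(s)\|_\infty\le C\bigl(k^2\|X_s^{N'}-X_s^{N}\|_\infty+k\,\|J_s^{N'}-J_s^{N}\|_\infty\bigr)$ and likewise for $\hat\rho_s^{N'}(X_s^{N'})-\hat\rho_s^{N}(X_s^{N})$. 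Applying the B\"urkh\"older inequality and Gronwall's lemma to the equations for $X^{N},X^{N'}$ and for the exponents of $J^{N},J^{N'}$ --- the discrepancy coming from the Lipschitz difference of the common coefficients and from the tail noise, whose bracket is $\lesssim\sum_{k>2N}\alpha_k^{-2}\lesssim N^{-(2q-1)}$ for $X$ and $\lesssim\sum_{k>2N}\alpha_k^{-2}\|\partial_x^2\phi_k\|_\infty^2\lesssim N^{-(2q-3)}$ for $J$ --- one obtains, for every $p\ge1$,
\[
\E\sup_{s\le1}\|X_s^{N'}-X_s^{N}\|_\infty^{2p}\le C_p\,N^{-p(2q-1)},\qquad \E\sup_{s\le1}\|J_s^{N'}-J_s^{N}\|_\infty^{2p}\le C_p\,N^{-p(2q-3)}.
\]
Inserting these, and using \eqref{eq4.21} together with Cauchy--Schwarz in $\omega$, the off-diagonal contribution to $\E\sup_{s\le t}\|I_1(s)\|_{L^2(\rho\,dx)}^2$ is dominated by $C\,\xi(2q-4)\,N^{-(2q-1)}+C\,\xi(2q-2)\,N^{-(2q-3)}$, where $\xi(2q-4)<\infty$ precisely because $q>\tfrac52$; hence it is $\le C\,N^{-(2q-3)}\le C\,N^{-(q-\frac12)}$, which proves \eqref{eq4.26}. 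For \eqref{eq4.27} the argument is the same up to replacing the B\"urkh\"older inequality by Cauchy--Schwarz in time, $\sup_{s\le t}\|I_2(s)\|_{L^2(\rho\,dx)}^2\le t\int_0^t\bigl(\sum_{k\le 2N}\tfrac1{2\alpha_k^2}\|\Theta_k^{N'}(s,f_s^{N'})-\Theta_k^{N}(s,f_s^{N})\|_{L^2(\rho\,dx)}\bigr)^2\,ds$, followed by a further Cauchy--Schwarz over $k$ with weights $\alpha_k^{-1}$: the diagonal part of $\Theta_k^{N'}-\Theta_k^{N}$, bounded via \eqref{eq4.18} by $3k^2\|f_s^{N'}-f_s^N\|_{L^2(\rho\,dx)}$, yields the term $C_2\,\xi(2q-2)\int_0^t\E\|f_s^{N'}-f_s^N\|_{L^2(\rho\,dx)}^2\,ds$ (the constant absorbing the $\xi(2q)$ of the Cauchy--Schwarz step), and the off-diagonal parts --- involving $a_k^{N'}-a_k^N$, $b_k^{N'}-b_k^N$, $\hat\rho^{N'}(X^{N'})-\hat\rho^{N}(X^{N})$ and $\partial_x^2\phi_k(X^{N'})-\partial_x^2\phi_k(X^{N})$, with $\|\partial_x^2\phi_k\|_\infty\le k$, $\|\partial_x(\partial_x^2\phi_k\,\partial_x\phi_k)\|_\infty\le k^2$ --- are $\le C\,N^{-(2q-3)}\le C\,N^{-(q-\frac12)}$ by the same estimates.

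The step I expect to be the main obstacle is the derivation of the quantitative, all--moments, uniform--in--$t$ rates for $\|X^{N'}-X^{N}\|_\infty$ and $\|J^{N'}-J^{N}\|_\infty$, together with the careful bookkeeping of the numerous off-diagonal remainders; this is exactly where the smoothness index $q>\tfrac52$ is indispensable, since it is what guarantees both the convergence of the series $\sum_k k^{4-2q}=\xi(2q-4)$ arising in the $k$-summation and the inequality $2q-3>q-\tfrac12$, ensuring that the off-diagonal error is negligible with respect to the announced $N^{-(q-1/2)}$.
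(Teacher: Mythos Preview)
Your overall strategy --- split $\Lambda_k^{N'}-\Lambda_k^{N}$ (and $\Theta_k^{N'}-\Theta_k^{N}$) into a diagonal part in $f^{N'}-f^N$ yielding the Gronwall integral, and an off-diagonal part in the coefficient differences yielding a small remainder --- is exactly the paper's. The two proofs diverge in how the off-diagonal remainder is estimated.

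You propose to control the coefficient differences $a_k^{N'}-a_k^N$, $\hat\rho^{N'}(X^{N'})-\hat\rho^N(X^N)$ in $L^\infty(\T)$, reducing everything to moment bounds on $\|X^{N'}-X^N\|_\infty$ and $\|J^{N'}-J^N\|_\infty$ with rates that \emph{scale with the moment order} ($N^{-p(2q-1)}$, $N^{-p(2q-3)}$). The paper does not go this route: it never takes a supremum over $x$, and its Appendix estimates (Propositions \ref{prop5.1} and \ref{prop5.2}) are pointwise in $x$ with rates that do \emph{not} improve with $p$ (namely $\E|X_t^N(x)-X_t(x)|^{2p}\le C_p\,N^{-(2q-1)}$ and $\E|\tilde K_t^N-\tilde K_t|^{2p}\le C\,N^{-(q-\delta)}$). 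Instead, the paper bounds the remainder in $L^2(\rho\,dx)$, writes $1/\rho_s^{N}(X_s^{N})=\tilde K_s^{N}/\rho$ via Lemma \ref{LEMMA}, and applies two layers of H\"older in $(\omega,x)$ with a carefully chosen exponent ($r^2=2$) to squeeze out exactly $N^{-(q-\frac12)}$ from the fixed rate $N^{-(2q-1)}$; the Jacobian contribution is handled through $\tilde K^N-\tilde K^{N'}$ and Proposition \ref{prop5.2}.

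What each approach buys: the paper's argument is more economical --- it needs only the pointwise moment bounds already proved in the Appendix and no Kolmogorov-type passage to uniform norms --- at the cost of a slightly delicate H\"older bookkeeping. Your approach, if the $\|\cdot\|_\infty$ rates can be established (this is plausible on $\T$ via Kolmogorov, using the extra spatial regularity of the flow, but it is real additional work that the paper avoids), is conceptually cleaner and would actually give the sharper remainder $O(N^{-(2q-3)})$; since $2q-3>q-\tfrac12$ for $q>\tfrac52$, this is more than enough for \eqref{eq4.26}--\eqref{eq4.27}. Your identification of the sup-norm rates as ``the main obstacle'' is accurate, and it is precisely the obstacle the paper sidesteps.
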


\begin{proof}
Recall the expression of $J_1(s)$ in \eqref{eq4.24.1}:
\begin{equation*}
J_1(s)= \Bigl[ -\int_\T f_s^{N'}a_k^{N'}(s)\, \rho dx+ \int_\T f_s^{N}a_k^{N}(s)\, \rho dx\Bigr]\, \hat\rho_s^{N'}(X_s^{N'}).
\end{equation*}
Put
\begin{equation*}
J_{11}(s)= \Bigl[ \int_\T \bigl(f_s^N-f_s^{N'}\bigr)a_k^{N'}(s)\, \rho dx\Bigr]\,\hat\rho_s^{N'}(X_s^{N'}),
\end{equation*}
and 
\begin{equation*}
J_{12}(s)= \Bigl[ \int_\T  f_s^N\bigl( a_k^N(s)-a_k^{N'}(s)\bigr)\, \rho dx\Bigr]\,\hat\rho_s^{N'}(X_s^{N'}).
\end{equation*}
Then $\dis J_1(s)=J_{11}(s)+J_{12}(s)$. Obviously 
\begin{equation}\label{eq4.29.1}
||J_{11}(s)||_{L^2(\rho dx)}\leq k\, ||f_s^N-f_s^{N'}||_{L^2(\rho dx)}.
\end{equation}
Remark that for any \textcolor{black}{strictly} positive probability density $\sigma$, $\dis \int_\T \frac{dx}{\sigma}\geq 1$, since 
\begin{equation*}
1=\int_\T \sqrt{\sigma}\cdot \frac{1}{\sqrt{\sigma}}\,dx
\leq \Bigl( \int_\T \frac{dx}{\sigma}\Bigr)^{1/2}.
\end{equation*}
Therefore 
\begin{equation*}
\int_\T \bigl(\hat\rho_s^N\bigr)^2(X_s^{N'})\, \rho dx=1/\int_\T\frac{dx}{\rho_s^{N'}}\leq 1.
\end{equation*}
So by expression of $J_{12}(s)$ and Cauchy-Schwarz inequality, we get 
\begin{equation}\label{eq4.29.2}
\int_\T |J_{12}(s)|^2\, \rho dx \leq ||f_s^N||_{L^2(\rho dx)}^2\, \int_\T \Bigl( a_k^N(s)-a_k^{N'}(s)\Bigr)^2\, \rho dx.
\end{equation}

Now 
\begin{equation*}
a_k^N(s)-a_k^{N'}(s)
=\frac{\partial_x^2\phi_k(X_s^N)-\partial_x^2\phi_k(X_s^{N'})}{\rho_s^{N'}(X_s^{N'})}
+ \partial_x^2\phi_k(X_s^N)\Bigl(\frac{1}{\rho_s^{N}(X_s^{N})} -\frac{1}{\rho_s^{N'}(X_s^{N'})}\Bigr).
\end{equation*}
Remark that
\begin{equation*}
\frac{|\partial_x^2\phi_k(X_s^N)-\partial_x^2\phi_k(X_s^{N'})|}{\rho_s^{N'}(X_s^{N'})}
\leq k^2\, \frac{ |X_s^N-X_s^{N'}|}{\rho_s^{N'}(X_s^{N'})}.
\end{equation*}
Put $\dis V_k(s)= \int_\T \Bigl( a_k^N(s)-a_k^{N'}(s)\Bigr)^2\, \rho dx$, then, according to above two relations, 
\begin{equation*}
V_k(s) \leq 2\,\Bigl[ k^4 \int_\T\frac{ |X_s^N-X_s^{N'}|^2}{(\rho_s^{N'}(X_s^{N'}))^2}\, \rho dx
+k^2 \int_{\T} \Bigl(\frac{1}{\rho_s^{N}(X_s^{N})} -\frac{1}{\rho_s^{N'}(X_s^{N'})}\Bigr)^2\, \rho dx\Bigr].
\end{equation*}
Putting
\begin{equation*}
V_{k1}(s)=\int_\T\frac{ |X_s^N-X_s^{N'}|^2}{(\rho_s^{N'}(X_s^{N'}))^2}\, \rho dx,\quad
V_{k2}(s)=\int_{\T} \Bigl(\frac{1}{\rho_s^{N}(X_s^{N})} -\frac{1}{\rho_s^{N'}(X_s^{N'})}\Bigr)^2\, \rho dx,
\end{equation*}
then $\dis V_k(s) \leq  2 k^4 V_{k1}(s)+ 2 k^2 V_{k2}(s)$. Using \eqref{eq4.29.2}, we write down 

\begin{equation}\label{eq4.29.3}
\int_\T |J_{12}(s)|^2\, \rho dx\leq 2k^4 ||f_s^N||_{L^2(\rho dx)}^2\, V_{k1}(s)+ 2 k^2 ||f_s^N||_{L^2(\rho dx)}^2 V_{k2}(s).
\end{equation}

Let $r>1$ and $\tilde r>1$ such that $\dis \frac{1}{r}+ \frac{1}{\tilde r}=1$. By H\"older inequality, we have
\begin{equation*}
\int_0^t \E\Bigl( ||f_s^N||_{L^2}^2 V_{k1}(s)\Bigr)\,ds
\leq \Bigl[ \int_0^t \E\Bigl( ||f_s^N||_{L^2}^{2\tilde r}\Bigr)\,ds\Bigr]^{1/\tilde r}
\Bigl[\int_0^t \E\Bigl(\int_\T\frac{ |X_s^N-X_s^{N'}|^{2r}}{(\rho_s^{N'}(X_s^{N'}))^{2r}}\, \rho dx\Bigr)\,ds\Bigr]^{1/r}.
\end{equation*}
Denote by $\dis o_N(t)^{1/r}$ the last term in the product in the right hand, again by  H\"older inequality, 
we have
\begin{equation}\label{eq4.29.4}
o_N(t)\leq \Bigl[\int_0^t\int_\T \E\bigl( |X_s^N-X_s^{N'}|^{2r^2}\bigr)\, \rho dx ds\Bigr]^{1/r}
\Bigl[\int_0^t\int_\T \E\Bigl( \frac{1}{\bigl(\rho_s^{N'}(X_s^{N'})\bigr)^{2r\tilde r}}\Bigr)\, \rho dx ds\Bigr]^{1/\tilde r}.
\end{equation}
Now using \eqref{eq5.3} in the next section and for $N'\geq N$, we have 
\begin{equation*}
\Bigl( \E\Bigl[ |X_s^N-X_s^{N'}|^{2r^2}\Bigr]\Bigr)^{1/2r^2}
\leq 2\,  \Bigl( \frac{C_{r^2}}{N^{2q-1}} \Bigr)^{1/r^2};
\end{equation*}
therefore for some constant $C>0$, independent of $\omega, x, t$ such that 
\begin{equation}\label{eq4.29.5}
\Bigl[\int_0^t\int_\T \E\bigl( |X_s^N-X_s^{N'}|^{2r^2}\bigr)\, \rho dx ds\Bigr]^{1/r^2}
\leq \frac{C}{N^{(2q-1)/r^2}}=\frac{C}{N^{q-\frac{1}{2}}},
\end{equation}
if we take $r^2=2$. For estimating the second factor on the right hand of \eqref{eq4.29.4}, we introduce 
$\tilde K_s^N$ the density of $(X_s^N)^{-1}_\#(dx)$ relative to $dx$. By Lemma \ref{LEMMA}, 
$\dis \rho_s^{N'}(X_s^{N'})\, \tilde K_s^N=\rho$, which implies that 
\begin{equation*}
\frac{1}{(\rho_s^{N'}(X_s^{N'}))^{2r\tilde r}}=\frac{(\tilde K_s^{N'})^{2r\tilde r}}{\rho^{2r\tilde r}}.
\end{equation*}
Since $0<\delta_1\leq \rho\leq \delta_2$ and  using \eqref{eq5.6} in Appendice, we get 
\begin{equation*}
\sup_N \sup_{s\in [0,1]} \E\Bigl(\int_\T \frac{(\tilde K_s^{N})^{2r\tilde r}}{\rho^{2r\tilde r}}\rho dx\Bigr) <+\infty.
\end{equation*}
Now combining this result with \eqref{eq4.29.4}, \eqref{eq4.29.5}, we get 

\begin{equation}\label{eq4.29.6}
\int_0^t \E\Bigl( ||f_s^N||_{L^2}^2 V_{k1}(s)\Bigr)\,ds\leq \frac{C k^4}{N^{q-\frac{1}{2}}}.
\end{equation}
Again by Lemma \ref{LEMMA}, we have 
$\dis V_{k2}(s)=\int_\T (\tilde K_s^N-\tilde K_s^{N'})^2\, \rho^{-1}\, dx$, so that  

\begin{equation*}
\int_0^t\E\Bigl(||f_s^N||_{L2}^2 V_{k2}(s)\Bigr)\, ds
\leq \Bigl(\int_0^t \E\Big[||f_s^N||_{L^2}^{2\tilde r}\Bigr] ds\Bigr)^{1/\tilde r}
\Bigl(\int_0^t\int_\T \E\Bigl[|\tilde K_s^N-\tilde K_s^{N'}|^{2r}\Bigr]\rho^{-r+1} dxds\Bigr)^{1/r}.
\end{equation*}

By Proposition \ref{prop5.2} in Appendice, for some $\delta>0$ such that $\dis q-\frac{1}{2}<q-\delta<2q-3$, 
\begin{equation*}
\E\Bigl[|\tilde K_s^N-\tilde K_s^{N'}|^{2r}\Bigr]
\leq \frac{C}{N^{q-\delta}}.
\end{equation*}
For some $r>1$ close to $1$, we have $\dis \frac{1}{N^{(q-\delta)/r}}=\frac{1}{N^{q-\frac{1}{2}}}$. 
Finally we get
\begin{equation}\label{eq4.29.7}
\int_0^t\E\Bigl(||f_s^N||_{L2}^2 V_{k2}(s)\Bigr)\, ds\leq \frac{C}{N^{q-\frac{1}{2}}}.
\end{equation}
Finally combining \eqref{eq4.29.3}, \eqref{eq4.29.6} and \eqref{eq4.29.7}, we obtain 
\begin{equation}\label{eq4.29.8}
\int_0^t\E\Bigl[\int_{\T} J_{12}(s)^2\, \rho dx\Bigr]\,ds \leq \frac{Ck^4}{N^{q-\frac{1}{2}}}.
\end{equation}

For  estimating $J_2(s)$, we remark that 
\begin{equation*}
\begin{split}
\hat\rho_s^N(X_s^N)-\hat\rho_s^{N'}(X_s^{N'})
&= \Bigl( \frac{1}{\rho_s^N(X_s^N)}-\frac{1}{\rho_s^{N'}(X_s^{N'})}\Bigr) \frac{1}{\int_\T \frac{dx}{\rho_s^{N}}}
+\frac{1}{\rho_s^{N'}(X_s^{N'})}\Bigl(\frac{1}{\int_\T \frac{dx}{\rho_s^{N}}}-\frac{1}{\int_\T \frac{dx}{\rho_s^{N'}}}\Bigr)
\\
&= \frac{\tilde K_s^N-\tilde K_s^{N'}}{\rho \int_\T \frac{dx}{\rho_s^{N}}}
+\frac{1}{\rho_s^{N'}(X_s^{N'})}\, \frac{\int_\T \frac{dx}{\rho_s^{N'}}
-\int_\T \frac{dx}{\rho_s^{N}}}{\int_\T \frac{dx}{\rho_s^{N'}}\int_\T \frac{dx}{\rho_s^{N'}}}.
\end{split}
\end{equation*}
Note that 
\begin{equation*}
\int_\T \frac{dx}{\rho_s^{N}}=\int_\T \frac{\rho dx}{(\rho_s^N(X_s^N))^2}
=\int_\T \frac{(\tilde K_s^N)^2}{\rho}\, dx;
\end{equation*}
then
\begin{equation*}
\int_\T \frac{dx}{\rho_s^{N'}}-\int_\T \frac{dx}{\rho_s^{N}}
=\int_\T \frac{(\tilde K_s^{N'})^2- (\tilde K_s^N)^2}{\rho}\, dx.
\end{equation*}
On the other hand,
\begin{equation*}
\int_\T (a_k^N(s))^2\, \rho dx \leq k^2 \int_\T \frac{\rho dx}{(\rho_s^N(X_s^N))^2}
=k^2 \int_\T \frac{dx}{\rho_s^{N}},
\end{equation*}
so that 
\begin{equation*}
\Bigl|\int_\T f_s^N a_k^N(s)\, \rho dx\Bigr|\leq k ||f_s^N||_{L^2} \Bigl(\int_\T \frac{dx}{\rho_s^{N}} \Bigr)^{1/2}.
\end{equation*}
Recall that 
\begin{equation*}
J_2(s)=\Bigl(\int_\T f_s^N a_k^N(s)\, \rho dx \Bigr) \Bigl(\hat\rho_s^N(X_s^N)-\hat\rho_s^{N'}(X_s^{N'}) \Bigr).
\end{equation*}
According to above calculation, we get
\begin{equation*}
|J_2(s)|\leq k ||f_s^N||_{L^2}\, \frac{|\tilde K_s^N-\tilde K_s^{N'}|}{\rho}
+\frac{ k ||f_s^N||_{L^2}}{\rho_s^{N'}(X_s^{N'})}\,
\Bigl|\int_\T \frac{(\tilde K_s^{N'})^2- (\tilde K_s^N)^2}{\rho}\, dx\Bigr|.
\end{equation*}
Proceeding as for estimating $J_1(s)$, we get finally
\begin{equation}\label{eq4.29.9}
\int_0^t \E\Bigl[ |J_2(s)|^2\rho dx\Bigr]\, ds \leq \frac{C k^2}{N^{q-\frac{1}{2}}}.
\end{equation}
Now using \eqref{eq4.29.1}, \eqref{eq4.29.8} and \eqref{eq4.29.9} and 
the fact that the series $\dis \sum_{k\geq 1}\frac{k^4}{k^{2q}}$ converges for $\dis q>\frac{5}{2}$,
 we get \eqref{eq4.26}. 
Inequality \eqref{eq4.27} can be proved in a similar way. 

\end{proof}

\begin{definition}\label{DEF}
We define $\dis g_t=f_t(X_t^{-1})$.
\end{definition}

 It is obvious that  $\dis \int_\T g_t^2\,\rho_t\, dx<+\infty$. In waht follows, we justify that $\{g_t; t\in [0,1]\}$ is the 
 stochastic parallel translation along the curve $\{(X_t)_\#(\rho dx); \ t\in [0,1]\}$. We have no explicit SDE for $g_t$, 
 but $f_t=g_t(X_t)$ satisfies a SDE. More precisely, $\{f_t; t\in [0,1]\}$ is a solution to the following SDE, with $\dis q>\frac{5}{2}$,
 
 \begin{equation}\label{eq4.29.10}
d_tf_t=\sum_{k=1}^{+\infty} \frac{1}{k^q}\, \Lambda_k(t, f_t)\, dB_t^k
+\sum_{k=1}^{+\infty} \frac{1}{2 k^{2q}}\,\Theta_k(t,f_t)\, dt,
\end{equation}
where
\begin{equation*}
\Lambda_k(t,f)=-\Bigl(\int_\T f\, a_k(t)\, \rho\,dx\Bigr)\, \hat\rho_t(X_t), 
\end{equation*}
\begin{equation*}
\begin{split}
\Theta_k(t,f)=-&\Bigl(\int_\T f\, a_k(t)\, \rho\,dx\Bigr)\, \bigl(\hat\rho_t\,\partial_x^2\phi_k\bigr)(X_t)
-\Bigl(\int_\T f\, b_k(t)\, \rho\,dx\Bigr)\, \hat\rho_t(X_t)\\
+& 3\ \Bigl(\int_\T f\, a_k(t)\, \rho\,dx\Bigr)\, \Bigl(\int_\T \partial_x^2\phi_k\, \hat\rho_t\, dx\Bigr)\hat\rho_t(X_t), 
\end{split}
\end{equation*}
with 
\begin{equation*}
a_k(t)=\Bigl( \frac{\partial_x^2\phi_k}{\rho_t}\Bigr)(X_t), \quad
b_k(t)=\Bigl( \frac{\partial_x\bigl(\partial_x^2\phi_k\,\partial_x\phi_k\bigr)}{\rho_t}\Bigr)(X_t).
\end{equation*}

\begin{corollary}\label{corollary4.9}  Under hypothesis of theorem \ref{th4.8}, almost surely, 
\begin{equation}\label{eq4.29}
\lim_{N\ra +\infty} \int_\T (g_t^N)^2\, \rho_t^N\, dx=\int_\T g_t^2\, \rho_t\, dx.
\end{equation}
\end{corollary}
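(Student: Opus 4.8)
The plan is to transfer the $L^2(\rho_t^N\,dx)$ norm of $g_t^N$ (and likewise the $L^2(\rho_t\,dx)$ norm of $g_t$) back to an $L^2(\rho\,dx)$ norm of $f_t^N$ (resp.\ $f_t$) by a change of variables, and then to invoke the almost sure uniform convergence $f^N\to f$ in $C\bigl([0,1],L^2(\T,\rho\,dx)\bigr)$ established in Theorem \ref{th4.8}.

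First I would record the change-of-variables identity. Since $\mu_t^N=(X_t^N)_\#(\rho\,dx)$ has density $\rho_t^N$ and, by Definition \ref{DEF}, $g_t^N=f_t^N\bigl((X_t^N)^{-1}\bigr)$, so that $g_t^N(X_t^N)=f_t^N$, we have for every $t\in[0,1]$
\begin{equation*}
\int_\T (g_t^N)^2\,\rho_t^N\,dx=\int_\T (g_t^N)^2\,d\mu_t^N=\int_\T\bigl(g_t^N(X_t^N)\bigr)^2\,\rho\,dx=\int_\T (f_t^N)^2\,\rho\,dx,
\end{equation*}
which is exactly the computation behind \eqref{eq3.15} (equivalently a consequence of Lemma \ref{LEMMA} applied to $\Phi=X_t^N$, using $\rho_t^N(X_t^N)\,\tilde K_t^N=\rho$). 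The same argument, now with $g_t=f_t(X_t^{-1})$ and $\mu_t=(X_t)_\#(\rho\,dx)$, gives $\int_\T g_t^2\,\rho_t\,dx=\int_\T f_t^2\,\rho\,dx$ for every $t\in[0,1]$; in particular $\int_\T g_t^2\,\rho_t\,dx<+\infty$, since $f\in C\bigl([0,1],L^2(\T,\rho\,dx)\bigr)$.

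It therefore suffices to prove that, almost surely, $\|f_t^N\|_{L^2(\rho dx)}^2\to\|f_t\|_{L^2(\rho dx)}^2$. By Theorem \ref{th4.8} there is an event of full probability on which $\sup_{t\in[0,1]}\|f_t^N-f_t\|_{L^2(\rho dx)}\to 0$. On this event, using the norm identity above together with the $N$-independent bound $\|f_t^N\|_{L^2(\rho dx)}^2=\int_\T (g_t^N)^2\rho_t^N\,dx=\int_\T g_0^2\,\rho\,dx$ coming from the norm preservation of the approximating parallel translations (the version of Theorem \ref{th4.6} valid for \eqref{eq4.14}), we obtain
\begin{equation*}
\Bigl|\,\|f_t^N\|_{L^2(\rho dx)}^2-\|f_t\|_{L^2(\rho dx)}^2\,\Bigr|
\le\bigl(\|f_t^N\|_{L^2(\rho dx)}+\|f_t\|_{L^2(\rho dx)}\bigr)\,\|f_t^N-f_t\|_{L^2(\rho dx)}\longrightarrow 0
\end{equation*}
as $N\to\infty$, uniformly in $t\in[0,1]$. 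Combining this convergence with the two change-of-variables identities yields \eqref{eq4.29}.

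There is no real analytic obstacle here: the only point deserving attention is the legitimacy of the change of variables, namely that $X_t^N$ and $X_t$ are, for almost every $\omega$, diffeomorphisms of $\T$ depending continuously on $t$ — this is precisely the convergence in $C\bigl([0,1],\mathrm{Diff}(\T)\bigr)$ recalled before \eqref{eq4.15} — together with the associated relations $\rho_t^N(X_t^N)\tilde K_t^N=\rho$ and $\rho_t(X_t)\tilde K_t=\rho$ supplied by Lemma \ref{LEMMA}. Everything else is the reverse triangle inequality applied to the $L^2(\rho\,dx)$ convergence already in hand.
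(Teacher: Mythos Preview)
Your proof is correct and follows essentially the same approach as the paper: both rely on the change-of-variables identity $\int_\T (g_t^N)^2\rho_t^N\,dx=\int_\T (f_t^N)^2\rho\,dx$ (and its analogue for the limit) together with the almost sure convergence \eqref{eq4.17} from Theorem~\ref{th4.8}. Your write-up is more detailed---spelling out the reverse triangle inequality and invoking norm preservation to bound $\|f_t^N\|_{L^2(\rho dx)}$---but the latter step, while not wrong, is unnecessary since $\|f_t^N-f_t\|_{L^2(\rho dx)}\to 0$ already forces $\|f_t^N\|_{L^2(\rho dx)}\to\|f_t\|_{L^2(\rho dx)}$ by continuity of the norm.
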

\begin{proof}
We have
\begin{equation*}
\int_\T (g_t^N)\rho_t^n\, dx=\int_\T (f_t^N)^2\, \rho dx
\end{equation*}
which converges, by \eqref{eq4.17}, to 
\begin{equation*}
\int_\T f_t^2\, \rho dx=\int_\T g_t^2\, \rho_t\, dx.
\end{equation*}
\end{proof}

An immediate consequence of above result is 
\begin{corollary}\label{corollary4.10}
For any $t\in [0,1]$, 
\begin{equation}\label{eq4.30}
\int_\T g_t^2\, \rho_t\, dx=\int_\T g_0^2\, \rho dx.
\end{equation}
\end{corollary}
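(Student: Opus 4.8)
The plan is to deduce the identity directly from Corollary~\ref{corollary4.9} together with the norm-preservation already established for the truncated equations. First I would recall that, for every integer $N\geq 2$, the process $\{\partial_x\Psi_t^N;\ t\in[0,1]\}$ built from SDE~\eqref{eq4.14} is the parallel translation along $\{\mu_t^N;\ t\in[0,1]\}$; since the results of Theorem~\ref{th4.6} (hence the norm identity of Theorem~\ref{th4.5}) remain valid for the finitely many vector fields $\nabla\phi_1,\dots,\nabla\phi_{2N}$, one has
\[
\int_\T (g_t^N)^2\,\rho_t^N\,dx=\int_\T (g_0^N)^2\,\rho\,dx,\qquad t\in[0,1].
\]
Because the initial datum $g_0^N=\partial_x\Psi_0$ is chosen once and for all, independently of the truncation level $N$, the right-hand side is the $N$-independent constant $\int_\T g_0^2\,\rho\,dx$.

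Then I would let $N\to+\infty$. By Corollary~\ref{corollary4.9}, almost surely $\int_\T (g_t^N)^2\,\rho_t^N\,dx\to\int_\T g_t^2\,\rho_t\,dx$ for each fixed $t$, while the right-hand side above is constant in $N$; passing to the limit yields $\int_\T g_t^2\,\rho_t\,dx=\int_\T g_0^2\,\rho\,dx$, which is exactly \eqref{eq4.30}. If one wants this simultaneously for all $t\in[0,1]$ on one event of full measure, I would instead use the uniform convergence $\sup_{t\in[0,1]}\|f_t^N-f_t\|_{L^2(\rho dx)}\to 0$ from \eqref{eq4.17}, combined with the identities $\int_\T (g_t^N)^2\rho_t^N\,dx=\int_\T (f_t^N)^2\rho\,dx$ and $\int_\T f_t^2\rho\,dx=\int_\T g_t^2\rho_t\,dx$, to upgrade the pointwise-in-$t$ limit to a uniform one.

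There is essentially no analytic obstacle: all the work has already been done in Theorem~\ref{th4.8} (the convergence $f^N\to f$) and in the finite-dimensional norm-preservation of Theorem~\ref{th4.5}. The only points deserving a line of care are the bookkeeping identity $\int_\T (g_t^N)^2\rho_t^N\,dx=\int_\T (f_t^N)^2\rho\,dx$, which follows from Lemma~\ref{LEMMA} applied to $\Phi=X_t^N$ (so that $\rho_t^N(X_t^N)\,\tilde K_t^N=\rho$ with $\tilde K_t^N=d(X_t^N)^{-1}_\#(dx)/dx$), and the observation that the constant on the right is genuinely independent of $N$ because $\partial_x\Psi_0$ was fixed before introducing the truncation. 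Once these identifications are recorded, \eqref{eq4.30} is immediate from \eqref{eq4.29}.
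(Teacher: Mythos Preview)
Your argument is correct and is exactly the approach the paper takes: the corollary is stated as an ``immediate consequence'' of Corollary~\ref{corollary4.9}, meaning one combines the finite-$N$ norm preservation $\int_\T (g_t^N)^2\,\rho_t^N\,dx=\int_\T |\partial_x\Psi_0|^2\,\rho\,dx$ (recorded just before Theorem~\ref{th4.8}) with the almost-sure convergence \eqref{eq4.29}. Your additional remark about upgrading to all $t\in[0,1]$ simultaneously via the uniform convergence in \eqref{eq4.17} is a sensible refinement that the paper leaves implicit.
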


\begin{theorem}\label{th4.11} Almost surely, for $t\in [0,1]$, 
\begin{equation}\label{eq4.31}
\int_\T g_t(x)\, dx=0.
\end{equation}
\end{theorem}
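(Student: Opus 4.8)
The plan is to deduce \eqref{eq4.31} by passing to the limit in the finite-dimensional approximations, for which the corresponding zero-mean property is already available.

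First, recall that the construction preceding Theorem \ref{th4.8} provides, for each $N\ge 2$, the parallel translation $g_t^N=\partial_x\Psi_t^N\in\TT_{\mu_t^N}$ along $\{\mu_t^N;\ t\in[0,1]\}$. Since on the torus $\TT_{\mu_t^N}$ consists exactly of the zero-mean elements of $L^2(\mu_t^N)$ (see \eqref{eq3.11}), this is precisely the statement $\int_\T g_t^N(x)\,dx=0$ for all $t\in[0,1]$, almost surely — the finite-$N$ analogue of Proposition \ref{prop4.4}, whose proof applies verbatim to \eqref{eq4.14}. Intersecting over $N\in\N$ gives a single full-measure event on which $\int_\T g_t^N\,dx=0$ for every $N$ and every $t$.

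Next I would pass to the limit. Writing $\tilde K_t^N$ and $\tilde K_t$ for the densities with respect to $dx$ of $\bigl((X_t^N)^{-1}\bigr)_\#(dx)$ and $(X_t^{-1})_\#(dx)$, respectively, the definition of push-forward gives, exactly as in the proof of Proposition \ref{prop4.4},
\begin{equation*}
\int_\T g_t^N(x)\,dx=\int_\T f_t^N\,\tilde K_t^N\,dx ,\qquad \int_\T g_t(x)\,dx=\int_\T f_t\,\tilde K_t\,dx .
\end{equation*}
Hence it suffices to prove that, almost surely, $\sup_{t\in[0,1]}\bigl|\int_\T f_t^N\tilde K_t^N\,dx-\int_\T f_t\tilde K_t\,dx\bigr|\to0$ as $N\to\infty$. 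I would split the difference as $\int_\T(f_t^N-f_t)\tilde K_t\,dx+\int_\T f_t^N(\tilde K_t^N-\tilde K_t)\,dx$; using $0<\delta_1\le\rho\le\delta_2$, the Cauchy--Schwarz inequality, $\int_\T\tilde K_t\,dx=1$ and $\int_\T dx=1$, the two terms are bounded by $\delta_1^{-1/2}\|\tilde K_t\|_\infty^{1/2}\|f_t^N-f_t\|_{L^2(\rho\,dx)}$ and $\delta_1^{-1/2}\|f_t^N\|_{L^2(\rho\,dx)}\|\tilde K_t^N-\tilde K_t\|_\infty$ respectively. Now $\sup_t\|f_t^N-f_t\|_{L^2(\rho\,dx)}\to0$ a.s. and $\sup_{N,\,t}\|f_t^N\|_{L^2(\rho\,dx)}<\infty$ a.s. by Theorem \ref{th4.8}; $\sup_t\|\tilde K_t\|_\infty<\infty$ a.s. because $t\mapsto\tilde K_t$ is a.s. continuous into $C(\T)$; and $\sup_t\|\tilde K_t^N-\tilde K_t\|_\infty\to0$ a.s., these Jacobians being determined by the flows and $X^N\to X$ in $C\bigl([0,1],\mathrm{Diff}(\T)\bigr)$ (the same fact behind the uniform convergence $\rho_t^N\to\rho_t$ noted in the text), or alternatively via the explicit exponential formula for $\tilde K_t^N$ together with Proposition \ref{prop5.2} and the Borel--Cantelli argument used in the proof of Theorem \ref{th4.8}. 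Assembling these, $\int_\T g_t\,dx=\lim_N\int_\T g_t^N\,dx=0$ for all $t$, almost surely.

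The step I expect to be the main obstacle is the last one: obtaining the uniform-in-$t$ almost-sure convergence of the Jacobians $\tilde K^N$ on a single common full-measure set, which is exactly where the moment estimates for the (inverse) flow densities — Proposition \ref{prop5.2} and Lemma \ref{LEMMA} — come in; the rest is bookkeeping on top of Theorem \ref{th4.8}. A more self-contained alternative avoiding the approximation is to argue directly as in Proposition \ref{prop4.4}: rewrite \eqref{eq4.29.10} in Stratonovich form (all its drift terms arise from the It\^o contraction, as in \eqref{eq4.11}), apply It\^o's formula to $f_t\tilde K_t$ with $\tilde K_t$ given by the corresponding exponential formula, and check that the resulting martingale integrates to $0$ over $\T$; this would, however, require first justifying the Stratonovich reduction for the infinite series.
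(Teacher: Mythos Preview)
Your approach is correct and follows the same overall strategy as the paper --- deduce \eqref{eq4.31} for the infinite system by passing to the limit from the finite-$N$ approximations, for which Proposition \ref{prop4.4} already gives $\int_\T g_t^N\,dx=0$. The difference lies in the mode of convergence used to pass to the limit.

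The paper works \emph{weakly in $\omega$} and for each fixed $t$: it tests against an arbitrary bounded random variable $\xi$ and shows
\[
\E\Bigl(\xi\int_\T f_t^N\bigl((X_t^N)^{-1}\bigr)\,dx\Bigr)\ \longrightarrow\ \E\Bigl(\xi\int_\T f_t\bigl(X_t^{-1}\bigr)\,dx\Bigr),
\]
using only the uniform-in-$N$ moment bounds on the density of $(X_t^N)^{-1}_\#(\rho\,dx)$ (essentially \eqref{eq5.6}); since the left side vanishes, so does the right, and \eqref{eq4.31} follows for each $t$ a.s. This is technically lighter: one never needs almost-sure convergence of the Jacobians, only their $L^p$-boundedness. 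Your route, by contrast, works \emph{pathwise and uniformly in $t$}: after the same identity $\int_\T g_t^N\,dx=\int_\T f_t^N\tilde K_t^N\,dx$, you need $\sup_t\|\tilde K_t^N-\tilde K_t\|_\infty\to 0$ almost surely. You correctly flag this as the crux; it does follow from the announced convergence $X^N\to X$ in $C\bigl([0,1],\mathrm{Diff}(\T)\bigr)$ provided $\mathrm{Diff}(\T)$ carries (at least) the $C^1$ topology, since then $\tilde K_t^N=\partial_x X_t^N\to\partial_x X_t=\tilde K_t$ uniformly. What your approach buys is the conclusion for \emph{all} $t\in[0,1]$ on a single full-measure set directly, whereas the paper's argument yields it for fixed $t$ and relies on continuity in $t$ of $\int_\T g_t\,dx$ to upgrade. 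Your direct Stratonovich alternative mirrors exactly the proof of Proposition \ref{prop4.4}, and is morally the cleanest route, but --- as you note --- the reduction of the drift in \eqref{eq4.29.10} to a pure Stratonovich differential needs justification for the infinite sum.
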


\begin{proof}  Let $\xi$ be a bounded Random variable, using the uniform (relative to $N$) estimate of the density 
of $(X_t^N)^{-1}_\#(\rho dx)$, it is straightforward (see \cite{FangLuoTh}) to prove that
\begin{equation*}
\lim_{N\ra +\infty}\E\Bigl( \xi \int_\T f_t^N\bigl((X_t^N)^{-1}\bigr)\, dx\Bigr)
=\E\Bigl( \xi \int_\T f_t\bigl((X_t)^{-1}\bigr)\, dx\Bigr).
\end{equation*}
But by Proposition \ref{prop4.4}, $\dis \int_{\T} f_t^N\bigl((X_t^N)^{-1}\bigr)\, dx=\int_\T g_t^N(x)\, dx=0$.
Then for any bounded $\xi$, 
\begin{equation*}
\E\Bigl( \xi \int_\T f_t\bigl((X_t)^{-1}\bigr)\, dx\Bigr)=0.
\end{equation*}
The result \eqref{eq4.31} follows. 
\end{proof}

\section{Appendice}\label{sect5}

First we recall the notations: $\dis \phi_{2k-1}(x)=\frac{\sin(kx)}{k},\ \phi_{2k}(x)=-\frac{\cos(kx)}{k}$ and $\alpha_k=k^q$
for some $q>1$. Let $(X_t^N)$ and $(X_t)$ be solutions to SDE: 

\begin{equation}\label{eq5.1}
dX_t^N=\sum_{k=1}^N \frac{1}{\alpha_k} \Bigl( \partial_x\phi_{2k-1}(X_t^N)\, dB_{2k-1}(t)
+   \partial_x\phi_{2k}(X_t^N)\, dB_{2k}(t)\Bigr),
\end{equation}
\begin{equation}\label{eq5.2}
dX_t=\sum_{k=1}^\infty \frac{1}{\alpha_k} \Bigl( \partial_x\phi_{2k-1}(X_t)\, dB_{2k-1}(t)
+   \partial_x\phi_{2k}(X_t)\, dB_{2k}(t)\Bigr)
\end{equation}
respectively. 

\begin{proposition}\label{prop5.1} Let $p\geq 1$ be an integer, there exists a constant $C_p>0$ independent of $x$ such that
\begin{equation}\label{eq5.3}
\E\Bigl[ \Bigl( X_t^N(x)-X_t(x)\Bigr)^{2p}\Bigr]
\leq \frac{C_p}{N^{2q-1}}.
\end{equation}
\end{proposition}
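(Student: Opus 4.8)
The plan is to obtain a closed differential inequality for $\E\bigl[(X_t^N(x)-X_t(x))^{2p}\bigr]$ and then invoke Gronwall's lemma, the exact rate $N^{-(2q-1)}$ being produced by the tail series $\sum_{k>N}\alpha_k^{-2}$. Write $D_t=X_t^N(x)-X_t(x)$, understood through the continuous $\R$-valued lifts of the two torus processes issued from the common point $x$; since, as noted after \eqref{eq4.14}, the Stratonovich correction vanishes term by term, \eqref{eq5.1} and \eqref{eq5.2} are already It\^o equations and $D_t$ carries no drift. Subtracting \eqref{eq5.2} from \eqref{eq5.1} and using $\partial_x\phi_{2k-1}(x)=\cos(kx)$, $\partial_x\phi_{2k}(x)=\sin(kx)$, the increment of $D_t$ has a ``matched'' part built from $B_1,\dots,B_{2N}$ and a ``tail'' part $-\sum_{k>N}\alpha_k^{-1}\bigl(\cos(kX_t)\,dB_{2k-1}+\sin(kX_t)\,dB_{2k}\bigr)$ built from the remaining, independent, Brownian motions; hence the two blocks have zero cross–variation and, using $\cos^2+\sin^2=1$ in each,
\begin{equation*}
\frac{d\langle D\rangle_t}{dt}=\sum_{k=1}^N\frac{2}{\alpha_k^2}\bigl(1-\cos(kD_t)\bigr)+\sum_{k>N}\frac{1}{\alpha_k^2}.
\end{equation*}

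Next I would bound the right-hand side. From $2(1-\cos\theta)\le\theta^2$ (valid for all real $\theta$) the first sum is at most $\kappa_q D_t^2$ with $\kappa_q=\sum_{k\ge1}k^{2}/\alpha_k^2=\sum_{k\ge1}k^{2-2q}<\infty$ (here $q>3/2$ is used), while $\sum_{k>N}\alpha_k^{-2}=\sum_{k>N}k^{-2q}\le\int_N^\infty s^{-2q}\,ds=\frac{1}{(2q-1)N^{2q-1}}$. Applying It\^o's formula to $D_t^{2p}$, taking expectations, and discarding the stochastic integrals (true martingales, the coefficients being of linear growth in $D_t$ so that all moments of $D_t$ are finite), we get
\begin{equation*}
\frac{d}{dt}\,\E\bigl[D_t^{2p}\bigr]=p(2p-1)\,\E\!\Bigl[D_t^{2p-2}\,\frac{d\langle D\rangle_t}{dt}\Bigr]
\le p(2p-1)\kappa_q\,\E\bigl[D_t^{2p}\bigr]+\frac{p(2p-1)}{(2q-1)N^{2q-1}}\,\E\bigl[D_t^{2p-2}\bigr].
\end{equation*}
Since $D_0=0$, the case $p=1$ follows at once from Gronwall. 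For $p\ge2$ one argues by induction on $p$: the induction hypothesis gives $\E[D_t^{2p-2}]\le C_{p-1}N^{-(2q-1)}$, so the last term is $\le C'_pN^{-2(2q-1)}\le C'_pN^{-(2q-1)}$, and Gronwall applied to $t\mapsto\E[D_t^{2p}]$ (again with zero initial value) yields \eqref{eq5.3} with a constant $C_p$ depending only on $p$, $q$ and the time horizon — in particular independent of $x$, as is every estimate above.

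The only delicate point is the convergence of $\kappa_q=\sum_k k^2/\alpha_k^2$, which forces $q>3/2$; since every application of \eqref{eq5.3} in this paper requires $q>5/2$, this is not restrictive. If one insisted on the full range $q>1$, one would replace $2(1-\cos(kD_t))\le(kD_t)^2$ by the interpolation $2(1-\cos(kD_t))\le\min\bigl((kD_t)^2,4\bigr)\le 4\,(k|D_t|)^{2\theta}$ for any $\theta\in(0,q-\tfrac12)\cap(0,1]$, so that $\sum_k k^{2\theta}/\alpha_k^2<\infty$; the resulting inequality $\frac{d}{dt}\E[D_t^{2p}]\le C\,\E[D_t^{2p-2+2\theta}]+\cdots\le C\,(\E[D_t^{2p}])^{(p-1+\theta)/p}+\cdots$ is then closed by a nonlinear Gronwall argument, at the expense of a longer and less transparent proof. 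A minor technical point is the passage to continuous $\R$-valued lifts so that $D_t$ and its powers are literally defined, which is harmless since both processes start at $x$ and have Lipschitz (plus square-summable) coefficients.
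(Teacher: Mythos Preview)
Your proof is correct and follows essentially the same route as the paper: compute the quadratic variation of the difference, bound it by $C D_t^2 + N^{-(2q-1)}$ via $2(1-\cos\theta)\le\theta^2$ and the tail estimate, apply It\^o to $D_t^{2p}$, take expectations, and close by Gronwall. The one tactical difference is that the paper disposes of the factor $\E[D_t^{2p-2}]$ by the crude bound $D_t^{2p-2}\le(2\pi)^{2p-2}$ (using compactness of $\T$), whereas you use induction on $p$; your variant is slightly longer but yields the sharper rate $N^{-p(2q-1)}$ and would survive on a non-compact state space.
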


\begin{proof}
Let $\eta_t= X_t(x)-X_t^N(x)$.  Then 
\begin{equation*}
\begin{split}
d\eta_t=& \sum_{k=1}^N \frac{1}{\alpha_k} \Bigl[ \bigl( \cos(kX_t)-\cos(kX_t^N)\bigr)\,dB_{2k-1}(t)+
\bigl( \sin(kX_t)-\sin(kX_t^N)\bigr)\,dB_{2k}(t)\Bigr]\\
&+  \sum_{k>N} \frac{1}{\alpha_k} \Bigl[  \cos(kX_t)\,dB_{2k-1}(t)+
\sin(kX_t)\,dB_{2k}(t)\Bigr].
\end{split}
\end{equation*}
Then It\^o stochastic contraction $d\eta_t\cdot d\eta_t$ admits the expression
\begin{equation*}
\sum_{k=1}^N \frac{1}{\alpha_k^2} \Bigl[ \bigl( \cos(kX_t)-\cos(kX_t^N)\bigr)^2
+ \bigl( \sin(kX_t)-\sin(kX_t^N)\bigr)^2\Bigr]+ \sum_{k>N} \frac{1}{\alpha_k^2}, 
\end{equation*}
which is equal to 
\begin{equation*}
\sum_{k=1}^N \frac{4}{k^{2q}}\, \sin^2\Bigl(k\, \frac{X_t-X_t^N}{2}\Bigr) + \sum_{k>N}\frac{1}{k^{2q}}; 
\end{equation*}
the first above sum is dominated by 
\begin{equation*}
\sum_{k=1}^{+\infty} \frac{1}{k^{2q-2}}(X_t-X_t^N)^2,
\end{equation*}
while the second sum has the upper bound $\dis \frac{1}{N^{2q-1}}$. Hence there is a constant $C>0$ independent of $x$ such that 
\begin{equation}\label{eq5.4}
d\eta_t\cdot d\eta_t \leq \Bigl(C\, \eta_t^2 + \frac{1}{N^{2q-1}}\bigr)\, dt.
\end{equation}
By It\^o\textcolor{black}{'s} formula, $\dis d\eta_t^{2p}=2p\eta_t^{2p-1}d\eta_t + 2p(2p-1)\eta_t^{2p-2} \, d\eta_t\cdot d\eta_t$, which is dominated, 
according to \eqref{eq5.4}, by 
\begin{equation*}
2p\eta_t^{2p-1}d\eta_t + 2p(2p-1)C\, \eta_t^{2p}\, dt + 2p(2p-1)\frac{\eta_t^{2p-2}}{N^{2q-1}}dt.
\end{equation*}
Note that $\dis 2p(2p-1)\eta_t^{2p-2}\leq 2p(2p-1)(2\pi)^{2p-2}$ that is denoted by $C_p'$. Therefore 
\begin{equation*}
d\eta_t^{2p}\leq 2p\eta_t^{2p-1}\, d\eta_t + C_p'\, \eta_t^{2p} dt + \frac{C_p'}{N^{2q-1}} dt.
\end{equation*}

It follows that 
\begin{equation*}
\E(\eta_t^{2p})\leq C_p'\, \int_0^T \E(\eta_s^{2p})\, ds + \frac{C_p' t}{N^{2q-1}}.
\end{equation*}
Then Gronwall\textcolor{black}{'s} lemma yields 
\begin{equation*}
\E(\eta_t^{2p})\leq \frac{C_p' t}{N^{2q-1}}\, e^{C_p' t},
\end{equation*}
that is nothing but \eqref{eq5.3}.  
\end{proof}

Now we denote by $\dis \tilde K_t^N$ the density of $(X_t^N)^{-1}_\#(dx)$ with respect to $dx$ and 
$\dis \tilde K_t$ the density of  $(X_t^{-1})_\#(dx)$ with respect to $dx$. By Kunita formula \cite{Kunita}, 
we have
\begin{equation*}
\tilde K_t^N = \exp\Bigl( \sum_{k=1}^{2N}\frac{1}{\alpha_k}\int_0^t \partial_x^2\phi_k(X_s^N)\circ dB_s^k\Bigr),
\end{equation*}
and
\begin{equation*}
\tilde K_t = \exp\Bigl( \sum_{k=1}^{+\infty}\frac{1}{\alpha_k}\int_0^t \partial_x^2\phi_k(X_s)\circ dB_s^k\Bigr).
\end{equation*}
First we consider 
\begin{equation*}
\hat K_t^N = \exp\Bigl( \sum_{k=1}^{2N}\frac{1}{\alpha_k}\int_0^t \partial_x^2\phi_k(X_s^N)\, dB_s^k\Bigr),
\end{equation*}
and
\begin{equation*}
\hat K_t = \exp\Bigl( \sum_{k=1}^{+\infty}\frac{1}{\alpha_k}\int_0^t \partial_x^2\phi_k(X_s)\, dB_s^k\Bigr).
\end{equation*}
Then they are linked by
\begin{equation}\label{L}
\tilde K_t^N=\hat K_t^N\,exp\Bigl(-\sum_{k=1}^{2N} \frac{k^2}{k^{2q}}\Bigr).
\end{equation}

To estimate $ \hat K_t^N$ in $L^p$, we use exponential martingale. More precisely, 
for a continuous martingale $(M_t)$ with quadratic variation $<M>_t$, we write down, for $p\geq 1$,
\begin{equation*}
e^{pM_t}=e^{pM_t-p^2<M>_t}\, e^{p^2 <M>_t}.
\end{equation*}
By Cauchy-Schwarz inequality and the fact $\dis \E\bigl( e^{2pM_t-2p^2<M>_t}\bigr)=1$, we get 
\begin{equation}\label{eq5.5}
\E(e^{pM_t})\leq \Bigl( E\bigl( e^{2p^2<M>_t}\bigr)\Bigr)^{1/2}.
\end{equation}
For $\dis M_t=\sum_{k=1}^{2N}\frac{1}{\alpha_k}\int_0^t \partial_x^2\phi_k(X_s^N)\, dB_s^k$, we have
\begin{equation*}
<M>_t =\sum_{k=1}^N \int_0^t \frac{1}{\alpha_k^2} \bigl( \partial_x^2\phi_k(X_s^N)\bigr)^2\, ds
\leq \sum_{k=1}^{+\infty} \frac{t}{k^{2q-2}}=\xi(2q-2)\, t.
\end{equation*}
 Then by \eqref{eq5.5}, we get
\begin{equation}\label{eq5.6}
\E\Bigl( (\hat K_t^N)^p\Bigr) 
\leq e^{p^2\xi(2q-2)\, t}.
\end{equation}
Obviously above estimate holds true for $\tilde K_t$, that is, 
\begin{equation}\label{eq5.7}
\E\bigl( \hat K_t^p\bigr) 
\leq e^{p^2\xi(2q-2)\, t}.
\end{equation}

\begin{proposition}\label{prop5.2}
For $\dis q>\frac{5}{2}$, choose $\delta>0$ such that $q-\frac{1}{2}<q-\delta<2q-3$; then
there is a constant $C_p>0$ such that 
\begin{equation}\label{eq5.8}
\E\Bigl( (\tilde K_t^N-\tilde K_t)^{2p}\Bigr)\leq \frac{C}{N^{q-\delta}}.
\end{equation}
\end{proposition}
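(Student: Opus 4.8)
The plan is to pass from the pair of Jacobian densities $\tilde K_t^N,\tilde K_t$ to the pair of It\^o exponential martingales via the Stratonovich--It\^o identity \eqref{L}, and then to control the difference of those martingales by the Burkh\"older--Davis--Gundy inequality together with the pathwise estimate of Proposition~\ref{prop5.1}.

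First I would reduce the statement to a bound on $\hat K_t^N-\hat K_t$. By \eqref{L} there is a deterministic $\gamma_N\in(0,1]$ with $\tilde K_t^N=\gamma_N\,\hat K_t^N$, and letting $N\to\infty$ — legitimate because $q>5/2$ makes the underlying series absolutely convergent — one gets $\tilde K_t=\gamma_\infty\,\hat K_t$ with $0<\gamma_\infty\le\gamma_N\le1$ and $0\le\gamma_N-\gamma_\infty\le C\,N^{-(2q-3)}$. Then
\begin{equation*}
\tilde K_t^N-\tilde K_t=\gamma_N\bigl(\hat K_t^N-\hat K_t\bigr)+(\gamma_N-\gamma_\infty)\,\hat K_t ,
\end{equation*}
so that, using \eqref{eq5.7} on the last term,
\begin{equation*}
\E\bigl[(\tilde K_t^N-\tilde K_t)^{2p}\bigr]\le C\,\E\bigl[(\hat K_t^N-\hat K_t)^{2p}\bigr]+C\,N^{-2p(2q-3)} .
\end{equation*}
Since $2p(2q-3)\ge 4q-6>q-\delta$ for $q>5/2$, the last term is negligible, and it remains to bound $\E\bigl[(\hat K_t^N-\hat K_t)^{2p}\bigr]$.

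Writing $\hat K_t^N=e^{M_t^N}$ and $\hat K_t=e^{M_t}$, where $M_t^N=\sum_{k=1}^{2N}\frac{1}{\alpha_k}\int_0^t\partial_x^2\phi_k(X_s^N)\,dB_s^k$ and $M_t=\sum_{k=1}^{\infty}\frac{1}{\alpha_k}\int_0^t\partial_x^2\phi_k(X_s)\,dB_s^k$ are the corresponding It\^o martingales, I would use $|e^a-e^b|\le|a-b|(e^a+e^b)$ and H\"older's inequality with exponents $r>1$ and $r'=r/(r-1)$ to get
\begin{equation*}
\E\bigl[(\hat K_t^N-\hat K_t)^{2p}\bigr]\le\Bigl(\E\bigl[|M_t^N-M_t|^{2pr}\bigr]\Bigr)^{1/r}\Bigl(\E\bigl[(\hat K_t^N+\hat K_t)^{2pr'}\bigr]\Bigr)^{1/r'} ,
\end{equation*}
where the last factor is bounded uniformly in $N$ by \eqref{eq5.6}--\eqref{eq5.7}. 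By the Burkh\"older--Davis--Gundy inequality $\E\bigl[|M_t^N-M_t|^{2pr}\bigr]\le C\,\E\bigl[\langle M^N-M\rangle_t^{\,pr}\bigr]$; splitting $M^N-M$ into its overlap part ($k\le 2N$, with integrand $\partial_x^2\phi_k(X_s^N)-\partial_x^2\phi_k(X_s)$) and its tail part ($k>2N$), which use disjoint Brownian motions, and using $\|\partial_x^2\phi_k\|_\infty\le k$, $\|\partial_x^3\phi_k\|_\infty\le k^2$, $\alpha_k\ge 2^{-q}k^q$, gives
\begin{equation*}
\begin{split}
\langle M^N-M\rangle_t&\le\sum_{k=1}^{2N}\frac{\|\partial_x^3\phi_k\|_\infty^2}{\alpha_k^2}\int_0^t|X_s^N-X_s|^2\,ds+t\sum_{k>2N}\frac{\|\partial_x^2\phi_k\|_\infty^2}{\alpha_k^2}\\
&\le C\,\xi(2q-4)\int_0^t|X_s^N-X_s|^2\,ds+\frac{C}{N^{2q-3}} .
\end{split}
\end{equation*}
Raising to the power $pr$, taking expectations, applying Jensen's inequality to the time integral and then Proposition~\ref{prop5.1} (reducing to an even integer exponent via $|X_s^N-X_s|\le 2\pi$) yields $\E\bigl[|M_t^N-M_t|^{2pr}\bigr]\le C\,N^{-(2q-1)}+C\,N^{-pr(2q-3)}$.

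It remains to choose $r$. Since $2q-1-(q-\delta)=q-1+\delta>0$, one may fix $r$ with $1<r\le(2q-1)/(q-\delta)$, so that $(2q-1)/r\ge q-\delta$; moreover $pr(2q-3)\ge 2q-3>q-\delta$ by the assumption $q-\delta<2q-3$ on $\delta$. Hence $\bigl(\E[|M_t^N-M_t|^{2pr}]\bigr)^{1/r}\le C\,N^{-(q-\delta)}$, so $\E[(\hat K_t^N-\hat K_t)^{2p}]\le C\,N^{-(q-\delta)}$, which together with the reduction above gives \eqref{eq5.8}. The one essential obstacle is the convergence of the series $\xi(2q-4)=\sum_k k^{-(2q-4)}$ controlling the overlap term: it holds precisely when $q>5/2$, which is exactly where that hypothesis enters. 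Everything else is routine bookkeeping with \eqref{L}, the exponential-moment bounds \eqref{eq5.6}--\eqref{eq5.7}, the Burkh\"older--Davis--Gundy inequality and Proposition~\ref{prop5.1}.
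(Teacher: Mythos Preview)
Your proof is correct, but it takes a genuinely different route from the paper's.

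The paper works directly with the difference $\zeta_t=\hat K_t^N-\hat K_t$: it writes the It\^o SDE for each exponential martingale, computes $d\zeta_t$, bounds the quadratic variation $d\zeta_t\cdot d\zeta_t$ by $2\xi(2q-2)\zeta_t^2+2\xi(2q-4)\hat K_t^2|X_t^N-X_t|^2+\hat K_t^2 N^{-(2q-3)}$, applies It\^o's formula to $\zeta_t^{2p}$, takes expectations, uses H\"older with exponent $r=(2q-1)/(q-\delta)$ on the cross term, and closes with Gronwall's lemma. Only at the very end does it pass from $\hat K$ to $\tilde K$ via \eqref{L}.

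You instead linearise first: after the same reduction via \eqref{L}, you bound $|e^{M^N}-e^{M}|\le|M^N-M|(e^{M^N}+e^{M})$, separate the two factors by H\"older, control the exponential factor uniformly by \eqref{eq5.6}--\eqref{eq5.7}, and estimate the log-martingale difference $M^N-M$ by Burkh\"older--Davis--Gundy together with Proposition~\ref{prop5.1}. This avoids the Gronwall step entirely and makes the dependence on $q>5/2$ (through the convergence of $\xi(2q-4)$) and on the choice of $r$ completely transparent. The paper's argument is slightly more self-contained in that it never leaves the level of $\hat K$ itself; yours is arguably cleaner because it isolates exactly the two ingredients --- the exponential moment bound and the convergence rate of $X^N$ to $X$ --- that drive the estimate. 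Both use the same building blocks (\eqref{L}, \eqref{eq5.6}--\eqref{eq5.7}, Proposition~\ref{prop5.1}) and both need $q>5/2$ at the same place.
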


\begin{proof}
Let $\dis \zeta_t =\hat K_t^N-\hat K_t$. we have 
\begin{equation*}
d\hat K_t^N =\hat K_t^N\, \sum_{k=1}^{2N}\frac{1}{\alpha_k}\partial_x^2\phi_k(X_t^N)\, dB_t^k, 
\end{equation*}
and 
\begin{equation*}
d\hat K_t=\hat K_t\, \sum_{k=1}^{+\infty}\frac{1}{\alpha_k}\partial_x^2\phi_k(X_t)\, dB_t^k.
\end{equation*}
So 
\begin{equation*}
\begin{split}
d\zeta_t=&\sum_{k=1}^{2N}\frac{1}{\alpha_k}\Bigl( \hat K_t^N\,\partial_x^2\phi_k(X_t^N)
-\hat K_t\,\partial_x^2\phi_k(X_t)\Bigr)\, dB_t^k\\
&-\sum_{k>2N} \frac{1}{\alpha_k}\, \textcolor{black}{\hat K}_t\, \partial_x^2\phi_k(X_t)\, dB_t^k.
\end{split}
\end{equation*}
Now the It\^o stochastic contraction $d\zeta_t\cdot d\zeta_t$ admits the expression 

\begin{equation*}
\begin{split}
d\zeta_t\cdot \textcolor{black}{d}\zeta_t=&\sum_{k=1}^{2N}\frac{1}{\alpha_k^2}\Bigl( \hat K_t^N\,\partial_x^2\phi_k(X_t^N)\,
-\hat K_t\,\partial_x^2\phi_k(X_t)\Bigr)^2\, dt\\
&+\sum_{k>2N} \frac{1}{\alpha_k^2}\, \bigl(\textcolor{black}{\hat K}_t\, \partial_x^2\phi_k(X_t)\bigr)^2\, dt.
\end{split}
\end{equation*}
Remark that 

\begin{equation*}
\hat K_t^N\,\partial_x^2\phi_k(X_t^N)\,
-\hat K_t\,\partial_x^2\phi_k(X_t)
= (\hat K_t^N-\hat K_t)\, \partial_x^2\phi_k(X_s^N)
+ \hat K_t\, \bigl(\partial_x^2\phi_k(X_s^N)-\partial_x^2\phi_k(X_s)\bigr),
\end{equation*}
then
\begin{equation*}
|\hat K_t^N\,\partial_x^2\phi_k(X_t^N)\,
-\hat K_t\,\partial_x^2\phi_k(X_t)|
\leq k\,|\hat K_t^N-\hat K_t|
+ k^2\,\hat K_t\,  |X_s^N-X_s|.
\end{equation*}

Therefore we get the following upper bound
\begin{equation}\label{eq5.9}
d\zeta_t\cdot d\zeta_t
\leq 2\xi(2q-2)\, |\hat K_t^N-\hat K_t|^2+ 2\, \xi(2q-4)\, \hat K_t^2\, |X_t^N-X_t|^2
+\frac{\hat K_t^2}{N^{2q-3}}.
\end{equation}

For $p\geq 1$ an integer, by It\^o\textcolor{black}{'s} formula and above estimate, there are three constants $C_1, C_2, C_3$
only dependent of $p$ such that 
\begin{equation*}
\begin{split}
d\zeta_t^{2p}&=2p\zeta_t^{2p-1}d\zeta_t + 2p(2p-1)\zeta_t^{2p-2}\, d\zeta_t\cdot \textcolor{black}{d}\zeta_t\\
&\leq 2p\zeta_t^{2p-1}d\zeta_t+ C_1(p) \zeta_t^{2p}\,dt + C_2(p) \zeta_t^{2p-2}\hat K_t^2 (X_t^N-X_t)^2\, dt
+C_3(p) \frac{\zeta_t^{2p-2}\hat K_t^2}{N^{2q-3}}.
\end{split}
\end{equation*}
It follows that 
\begin{equation*}
\begin{split}
\E(\zeta^{2p})&\leq C_1(p)\int_0^t \E(\zeta_s^{2p})\, ds + C_2(p)\int_0^t \E\Bigl( \zeta_s^{2p-2}\hat K_s^2 (X_s^N-X_s)^2\Bigr)ds\\
&+\frac{C_3(p)}{N^{2q-3}}\int_0^t \E\Bigl( \zeta_s^{2p-2}\hat K_s^2\Bigr)\, ds.
\end{split}
\end{equation*}
Let $r>1$ and $\hat r>1$ such that $(1/r) + (1/\hat r)=1$. 
Now by \eqref{eq5.5} and \eqref{eq5.6}, we have 
\begin{equation*}
C=\sup_{s\in [0,1]}\E\Bigl( \bigl(\zeta_s^{2p-2}\hat K_s^2\bigr)^{\hat r}\Bigr) <+\infty.
\end{equation*}
On the other hand, by \eqref{eq5.3}, $\dis \E\bigl( (X_s^N-X_s)^{2r}\bigr)\leq \frac{C_r}{N^{2q-1}}$. Taking 
$\dis r=\frac{2q-1}{q-\delta}$ yields 
\begin{equation*}
\E(\textcolor{black}{\zeta_t}^{2p})\leq C_1(p)\int_0^t \E(\zeta_s^{2p})\, ds + \frac{C_2(p)}{N^{q-\delta}}\, t.
\end{equation*}
Gronwall lemma gives 
\begin{equation*}
\E\Bigl( (\hat K_t^N-\hat K_t)^{2p}\Bigr)\leq \frac{C}{N^{q-\delta}}.
\end{equation*}
Combing this with Relation \eqref{L}, we obtain the desired estimate \eqref{eq5.8}. 
\end{proof}

\vskip 4mm
{\bf Acknowledgement:} This work has been taken from a part of the PhD thesis \cite{Ding} of the first named author by a joint PhD program between the
Academy of Mathematics and Systems Science, Chinese Academy of Sciences and 
the Institute of Mathematics of Burgundy, University of Burgundy. 
He is grateful to the hospitality of these two institutions. The financial support from
China Scholarship Council and National Center for Mathematics and Interdisciplinary Sciences are particularly acknowledged. The third author has been partially supported by  National Key R$\&$D Program of China (No. 2020YF0712700) and 
National Natural Science Foundation of China (Grant No. 12171458).

\end{document}